\newcommand{\kla}[1]{ {\langle #1 \rangle} }
\newcommand{\st}{\;|\;}
\newcommand{\const}{ {\rm const} }
\newcommand{\dom}{ {\rm dom} }
\newcommand{\ran}{ {\rm ran} }
\newcommand{\Ult}{{\rm Ult}}
\newcommand{\crit}{ {\rm crit} }
\newcommand{\sub}{\subseteq}
\newfont{\ssi}{cmssi12 at 12pt}
\newcommand{\eins}{ {1{\rm\hspace{-0.5ex}l}} }
\newcommand{\rest}{{\restriction}}
\newcommand{\On}{ {\rm On} }
\newcommand{\leer}{\emptyset}
\newcommand{\ohne}{\setminus}
\newcommand{\lub}{\mathop{ {\rm lub}} }
\newcommand{\id}{ {\rm id} }
\newcommand{\qedd}[1]{\nopagebreak\hspace*{\fill}$ \Box_{#1} $}
\newenvironment{ea*}{\begin{eqnarray*}}{\end{eqnarray*}}
\newcommand{\claim}[2]{
     \begin{enumerate}
       \item[{#1}] {\em #2}
     \end{enumerate}}
\newcommand{\To}{\longrightarrow}
\newcommand{\emb}[1]{\longrightarrow_{#1} }
\newcommand{\power}{{\mathcal{P}}}
\newcommand{\calM}{\mathcal{M}}
\newcommand{\ba}{{\bar{a}}}
\newcommand{\bA}{{\bar{A}}}
\newcommand{\bj}{{\bar{j}}}
\newcommand{\bx}{{\bar{x}}}
\newcommand{\balpha}{{\bar{\alpha}}}
\newcommand{\bdelta}{{\bar{\delta}}}
\newcommand{\barf}{{\bar{f}}}
\newcommand{\tf}{{\tilde{f}}}
\newcommand{\bp}{{\bar{p}}}
\newcommand{\talpha}{{\tilde{\alpha}}}
\newcommand{\tA}{{\tilde{A}}}
\newcommand{\vf}{{\vec{f}}}
\newcommand{\vx}{{\vec{x}}}
\newcommand{\vA}{{\vec{A}}}
\newcommand{\vM}{{\vec{M}}}
\newcommand{\vU}{{\vec{U}}}
\newcommand{\vkappa}{{\vec{\kappa}}}
\newcommand{\vlambda}{{\vec{\lambda}}}
\newcommand{\veta}{{\vec{\eta}}}
\newcommand{\vpi}{{\vec{\pi}}}
\newcommand{\seq}[2]{{\langle#1\;|\;}\linebreak[0]{#2\rangle}}
\renewcommand{\phi}{\varphi}
\newcommand{\ZFC}{\ensuremath{\mathsf{ZFC}}}
\newcommand{\V}{\ensuremath{\mathrm{V}}}
\newcommand{\TC}{\mathop{\mathsf{TC}}}
\def\<#1>{\langle#1\rangle}
\newcommand{\B}{{\mathord{\mathbb{B}}}}
\renewcommand{\P}{{\mathord{\mathbb P}}}
\newcommand{\MP}{\ensuremath{\mathsf{MP}}}
\newcommand{\ColNothing}{\mathrm{Col}}
\newcommand{\Col}[1]{\ColNothing(#1)}
\newcommand{\MPColNothing}[1]{\MP_{\Col{\dot{\kappa}}}}
\newcommand{\isomorphic}{\cong}
\newcommand{\Add}{\mathord{\mathrm{Add}}}
\newcommand{\isomorphism}{\stackrel{\sim}{\longleftrightarrow}}
\newcommand{\Los}{{\L}{o}{\'s}}
\newcommand{\Prikry}{P\v{r}\'{\i}kr\'{y}}
\newcommand{\prooff}[1]{\noindent{\em Proof of {#1}.~}}
\newtheorem{thm}{Theorem}[section]
\newtheorem*{thm*}{Theorem} 
\newtheorem{cor}[thm]{Corollary}
\newtheorem{lem}[thm]{Lemma}
\newtheorem{obs}[thm]{Observation}
\theoremstyle{definition}
\newtheorem{defn}[thm]{Definition}
\newtheorem{question}[thm]{Question}
\theoremstyle{remark}
\newtheorem{remark}[thm]{Remark}
\newcommand{\Bukovsky}{Bukovsk\'{y}}
\newcommand{\MF}{\mathbb{M}}
\newcommand{\mix}{\text{mix}}
\newcommand{\BooleanValue}[1]{\llbracket#1\rrbracket}
\newcommand{\BV}{\BooleanValue}
\newcommand{\vG}{\vec{G}}
\newcounter{AllPurposeCounter}
\begin{document}

\title[Boolean Ultrapowers and the Bukovsk\'{y}-Dehornoy Phenomenon]{Boolean Ultrapowers, the Bukovsk\'{y}-Dehornoy Phenomenon, and Iterated Ultrapowers}

\author[Fuchs]{Gunter Fuchs}
\address[G.~Fuchs]{Mathematics,
          The Graduate Center of The City University of New York,
          365 Fifth Avenue, New York, NY 10016
          \&
          Mathematics,
          College of Staten Island of CUNY,
          Staten Island, NY 10314}
\email{Gunter.Fuchs@csi.cuny.edu}
\urladdr{http://www.math.csi.cuny/edu/$\sim$fuchs}

\author[Hamkins]{Joel David Hamkins}
 \address[J.~D.~Hamkins]{Mathematics, Philosophy, Computer Science,
          The Graduate Center of The City University of New York,
          365 Fifth Avenue, New York, NY 10016
          \&
          Mathematics,
          College of Staten Island of CUNY,
          Staten Island, NY 10314}
\email{jhamkins@gc.cuny.edu}
\urladdr{http://jdh.hamkins.org}

\thanks{The research of the first author has been supported in part by PSC CUNY research grant 68604-00 46.}
\keywords{Boolean ultrapowers, iterated ultrapowers, Prikry forcing, Magidor forcing, large cardinals}
\subjclass[2010]{03E35, 03E40, 03E45, 03E55, 03C20}

\begin{abstract}
We show that while the length $\omega$ iterated ultrapower by a normal ultrafilter is a Boolean ultrapower by the Boolean algebra of \Prikry{} forcing, it is consistent that no iteration of length greater than $\omega$ (of the same ultrafilter and its images) is a Boolean ultrapower. For longer iterations, where different ultrafilters are used, this is possible, though, and we give Magidor forcing and a generalization of \Prikry{} forcing as examples.
We refer to the discovery that the intersection of the finite iterates of the universe by a normal measure is the same as the generic extension of the direct limit model by the critical sequence as the \Bukovsky-Dehornoy phenomenon, and we develop a sufficient criterion (the existence of a simple skeleton) for when a version of this phenomenon holds in the context of Boolean ultrapowers. Assuming that the canonical generic filter over the Boolean ultrapower model has what we call a continuous representation, we show that the Boolean model consists precisely of those members of the intersection model that have continuously and eventually uniformly represented codes.
\end{abstract}

\maketitle

\section{Introduction}

There are two conspicuous features of \Prikry{} forcing (\cite{Prikry:Dissertation}) that we want to elucidate in this paper. Let $\mu$ be a normal measure on the measurable cardinal $\kappa$, and let $\P=\P_\mu$ be \Prikry{} forcing with respect to that measure. Then
\begin{enumerate}
  \item if one iterates the measure $\mu$ through the natural numbers, then the critical sequence is generic over the limit model for that model's version of the \Prikry{} forcing,
  \item the intersection of the finite iterates is the generic extension of the limit model by the critical sequence.
\end{enumerate}
We analyze these features through the lens of Boolean ultrapowers.
Our initial observation regarding 1.~was the rediscovery of a fact that was already observed by \Bukovsky{} in \cite{Bukovsky1977:IteratedUltrapowersAndPrikryForcing}, namely that the limit model can be realized as the Boolean ultrapower of the universe by the ultrafilter on the Boolean algebra of \Prikry{} forcing which results from pulling back the generic filter over the limit model that is generated by the critical sequence. It is very natural to think that this should be the case, because a Boolean ultrapower always comes with a filter that is generic over the model produced. Letting $\B$ be the Boolean algebra and $U$ the ultrafilter on it, we write $j:\V\To\check{\V}_U$ for the Boolean ultrapower and the elementary embedding. We will mostly be interested in the case where it is well-founded, and in that case, we take $\check{\V}_U$ to be transitive. The model $\check{\V}_U$ sits inside the model $\V^\B/U$, the full Boolean model. So in our notation, $\V^\B/U$ consists of the equivalence classes $[\sigma]_U$ of $\B$-names $\sigma$, with respect to the equivalence relation $\sim$ defined by letting $\sigma\sim\tau$ iff the Boolean value $\BV{\sigma=\tau}\in U$. The structure $\V^\B/U$ is equipped with a pseudo epsilon relation $E$, where $[\sigma]_UE[\tau]_U$ iff $\BV{\sigma\in\tau}\in U$. Again, in the case we are mostly interested in, $E$ is well-founded and extensional, so we can take $(\V^\B/U,E)$ to be transitive, and $E$ becomes the $\in$ relation. There is a special element in $\V^\B/U$, namely $G=[\dot{G}]_U$, where $\dot{G}$ is the canonical name for the generic filter. The filter $G$ is generic over the inner model of $\V^\B/U$ which consists only of the equivalence classes of those names $\sigma$ with $\BV{\sigma\in\check{\V}}\in U$. This is the Boolean ultrapower of $\V$ by $U$, and we write $\check{\V}_U$ for this model. There is an obvious elementary embedding from $\V$ to $\check{\V}_U$, the Boolean ultrapower map defined by $j(x)=[\check{x}]_U$. Then $G$ is $j(\B)$-generic over $\check{\V}_U$, and $\check{\V}_U[G]=\V^\B/U$. An in-depth exploration of this construction is undertaken in \cite{HamkinsSeabold:BULC}, and we will refer to this paper frequently and use the terminology used there.

There are other forcing notions that are ``accompanied'' by an appropriate iteration, in the sense that the critical sequence is generic over the limit model for the limit model's version of the forcing itself, and it is natural to expect the same situation to arise again: the limit model can be realized as a single Boolean ultrapower, and the forcing extension is the corresponding Boolean model. We show in Section \ref{section:IteratedUltrapowersAsSingleBooleanUltrapowers} that this occurs in the case of \Prikry{} forcing, Magidor forcing and a generalization of \Prikry{} forcing. We also show that it is consistent that no longer iterations of one measure can be realized as a single Boolean ultrapower.

Regarding 2., there is an enticing scenario for a connection to Boolean ultrapowers. The Boolean ultrapower can be regarded as a direct limit of models $M_A$, indexed by maximal antichains in $\B$. The model $M_A$ is the ultrapower of $\V$ by the ultrafilter $U_A$ on $A$ which consists of those subsets $X$ of $A$ whose join is in $U$. If $B$ refines $A$, then there is an embedding $\pi_{A,B}:M_A\To M_B$, and the maximal antichains are directed under refinement. The Boolean ultrapower $\check{\V}_U$ is the direct limit of these models and embeddings. So the Boolean ultrapower naturally is situated inside a forcing extension of itself, and it comes with a directed system of models whose limit it is. The question is whether the phenomenon in 2., which we call the \Bukovsky-Dehornoy phenomenon, holds in greater generality. That is, under which circumstances is the Boolean model $\V^\B/U$ (which is the same as the forcing extension $\check{\V}_U[G]$) equal to the intersection of the models $M_A$? We investigate this in Section \ref{section:Bukovsky-Dehornoy}. We develop a sufficient criterion there: the existence of a simple skeleton. We also develop a version of this phenomenon that applies to ill-founded Boolean ultrapowers. A simple skeleton consists of a directed collection $\mathfrak{A}$ of maximal antichains of $\B$ that generates the limit model, such that each $A\in\mathfrak{A}$ is simple, meaning that $\pi_{A,\infty}^{-1}``G\in M_A$, and satisfies the smallness assumption that $j``\mathfrak{A}\in\V^\B/U$. In Section \ref{sec:CEU}, we develop the concept of continuous, eventually uniform representations, resulting, among other things, in a sufficient criterion for when the \Bukovsky-Dehornoy phenomenon holds, without a smallness assumption, but with a slightly strengthened simplicity assumption.

Here is an overview of notations and facts around the Boolean ultrapower construction we use. For more information, we refer to \cite{HamkinsSeabold:BULC}.

\begin{enumerate}
\item $\V^\B/U$ is the model of equivalence classes of $\B$-names, according to the equivalence relation which identifies two $\B$-names $\sigma$ and $\tau$ if the Boolean value $\BooleanValue{\sigma=\tau}$ belongs to $U$. This model is equipped with a pseudo-epsilon relation $E$, according to which $[\sigma]_UE[\tau]_U$ holds iff $\BooleanValue{\sigma\in\tau}\in U$.
\item $\check{\V}_U$ is the Boolean ultrapower of $\V$ by $U$. It can be viewed as a submodel of $\V^\B/U$, and consists only of the equivalence classes of names $\sigma$ such that $\BV{\sigma\in\check{\V}}\in U$. Note that the class of such names is much richer than just names of the form $\check{x}$, since it includes mixtures of such names, and if $U$ is not generic, then these mixtures will not be equivalent to any $\check{x}$. The model consisting only of equivalence classes of names of the latter form would clearly be isomorphic to $\V$ itself!
\item There is a canonical embedding $j:\V\To\check{\V}_U$, defined by $j(x)=[\check{x}]_U$. To indicate that $j$ is that particular embedding, we may write $j:\V\emb{U}\check{\V}_U$.
\item If $A\sub\B$ is a maximal antichain, then $U_A$ is the ultrafilter on $A$ consisting of subsets $X$ of $A$ such that $\bigvee X\in U$.
\item $M_A=\V^A/U_A$ is the ultrapower of $\V$ by $U_A$, and $\pi_{0,A}:\V\To_{U_A}\V_A$ is the ultrapower embedding. Even though $U$ is not mentioned in the notation $M_A$, it will be clear from the context which ultrafilter is used.
\setcounter{AllPurposeCounter}{\value{enumi}}
\end{enumerate}
As usual, if $(\V^\B/U,E)$ is well-founded, which we assume below, then we take it to be transitive.
\begin{enumerate}
\setcounter{enumi}{\value{AllPurposeCounter}}
  \item If $A\in\check{\V}_U$ is a maximal antichain in $j(\B)$, then $A$ intersects $G=[\dot{G}]_U$ in exactly one condition which we denote by $b_A$. So $b_A$ is defined by $A\cap G=\{b_A\}$.
  \item $\check{\V}_U$ is the direct limit of all $M_A$, where $A$ is a maximal antichain in $\B$. These maximal antichains are ordered by refinement: $B\le^*A$ if $B$ refines $A$, meaning that for every $b\in B$, there is an $a\in A$ (which is unique) such that $b\le a$. This partial ordering makes the collection of maximal antichains of $\B$ a (downward) directed partial order, and there are canonical embeddings $\pi_{A,B}:M_A\To M_B$ which make this collection of ultrapowers and embeddings a directed system. Denote the direct limit embeddings by \[\pi_{A,\infty}:M_A\To\check{\V}_U.\]
\end{enumerate}

Otherwise, our notation should be standard. It may be worth pointing out that in the context of a partial order $\kla{\P,\le}$, for $p,q\in\P$, we write $p||q$ to express that $p$ and $q$ are compatible, meaning that there is an $r\in\P$ with $r\le p$ and $r\le q$. If the partial order is a Boolean algebra $\B$, then we often tacitly work with $\B\ohne\{0\}$. So in that case, $p||q$ would mean $p\land q\neq 0$, and an antichain in $\B$ is really an antichain in $\B\ohne\{0\}$. Similarly, $p\perp q$ means that $p$ and $q$ are incompatible. In the context of forcing, the symbol $||$ is used with a different meaning: if $\phi$ is a formula in the forcing language of $\P$, and $p$ is a condition in $\P$, then $p||\phi$ means that $p$ decides $\phi$, that is, either $p$ forces $\phi$, or $p$ forces $\neg\phi$.

The paper is organized as follows. In Section \ref{section:IteratedUltrapowersAsSingleBooleanUltrapowers}, we give three examples of canonical iterated ultrapowers that can be presented as single Boolean ultrapowers. The Boolean algebras here are those associated to \Prikry{} forcing, Magidor forcing and generalized \Prikry{} forcing. We also give a lower bound for the consistency strength of the assumption that an iterated ultrapower by one normal ultrafilter and its images of length greater than $\omega$ can be presented as a single Boolean ultrapower. In Section \ref{section:Bukovsky-Dehornoy}, we develop a sufficient criterion for when the \Bukovsky-Dehornoy phenomenon holds, that is, that the intersection model is equal to the Boolean model: the existence of a simple skeleton. Finally, in Section \ref{sec:CEU}, we develop the theory of continuous, eventually uniform representations, in order to describe the part of the intersection model that makes up the Boolean model, if the canonical generic filter $[\dot{G}]_U$ is uniformly represented.

\section{Iterated ultrapowers as single Boolean ultrapowers}
\label{section:IteratedUltrapowersAsSingleBooleanUltrapowers}

In this section, we will show three instances where iterated ultrapowers can be described as single Boolean ultrapowers. These forcing notions/Boolean algebras used will be used throughout the remainder of the paper.

\subsection{\Prikry{} forcing and the $\omega$-th iterate of a normal measure}
\label{subsec:PrikryForcingAndOmegathIterate}

Let $\mu$ be a normal measure on $\kappa$, and let $(M_n\st n<\omega)$ be the iterates of $\V$ by that measure, and let $\pi_{m,n}:M_m\To M_n$ be the canonical embeddings. Let's also set $\mu_n=\pi_{0,n}(\mu)$ and $\kappa_{n}=j_{0,n}(\kappa)$. So we have $M_0=\V$, $\kappa_0=\kappa$, $\mu_0=\mu$, and for every $n<\omega$, $\pi_{n,n+1}:M_n\To_{\mu_n}M_{n+1}$ is the ultrapower embedding from $M_n$ into its ultrapower $M_{n+1}$ by $\mu_n$, which is a normal measure in the sense of $M_{n+1}$ on $\kappa_{n}$. Let $M_\omega$ be the direct limit of this system of embeddings, with direct limit embeddings $\pi_{n,\omega}:M_n\To M_\omega$, and let $\kappa_\omega=j_{0,\omega}(\kappa_0)$ (so $\kappa_\omega=\sup_{n<\omega}\kappa_n$), and let $\mu_\omega=\pi_{0,\omega}(\mu)$.

There are well-known yet still fascinating connections between \Prikry{} forcing and this system of embeddings. Let's write $\P_\mu$ for \Prikry{} forcing with respect to the fixed normal measure $\mu$.
\begin{enumerate}
  \item The sequence $\vkappa=\seq{\kappa_n}{n<\omega}$ is $\pi_{0,\omega}(\P_\mu)$-generic over $M_\omega$.
  \item The model $\bigcap_{n<\omega}M_n$ is the forcing extension of $M_\omega$ by that sequence: $\bigcap_{n<\omega}M_n=M_\omega[\vkappa]$.
\end{enumerate}
The first fact is due to Solovay, relying on a characterization of \Prikry-generic sequences due to Mathias \cite{Mathias1973:PrikryGenericSequences}. The formulation is a little sloppy, and since we need to be more precise later, let's clarify right now that the sequence $\vkappa$ gives rise to a filter $G_\vkappa$ on $j_{0,\omega}(\P_\mu)$ that's defined by
\[G_\vkappa=\{\kla{\vkappa\rest n,A}\st A\in\mu_\omega,\ \text{for}\ i<n,\ \kappa_i<\min(A),\ \text{and for all}\ i\in[n,\omega),\ \kappa_i\in A\},\]
and it is this filter that is claimed to be generic in 1.~above.

The second connection is due to \Bukovsky{} and Dehornoy, independently, and we shall explore in the next section how it can be generalized to the realm of Boolean ultrapowers.

There is another fascinating and less well-known connection that was originally observed by Bukovsky in \cite{Bukovsky1977:IteratedUltrapowersAndPrikryForcing}, and that we rediscovered. Let us denote the Boolean algebra of the \Prikry{} forcing $\P_\mu$ by $\B_\mu$, and let $G^*_\vkappa$ be the ultrafilter in $j_{0,\omega}(\B_\mu)$ generated by $G_\vkappa$. Let $U=j^{-1}``G^*_\vkappa$, which we shall call the canonical ultrafilter on $\B_\mu$ (it is obvious that it is indeed an ultrafilter). We will give a short alternative proof here, which relies on the following general fact about Boolean ultrapowers.

\begin{thm}[{\cite[Theorem 38]{HamkinsSeabold:BULC}}]
\label{thm:HamkinsSeaboldCriterionForBeingBooleanUltrapower}
Suppose that $\B$ is a complete Boolean algebra and $j:\V\To\bar{\V}$ is an embedding such that there is a filter $F\sub j(\B)$ that is $\ran(j)$-generic and
\[\bar{\V}=\{j(f)(b_A)\st A\sub\B\ \text{is a maximal antichain and}\ f:A\To\V\},\]
where $b_A$ is the unique member of $F\cap j(A)$. Then $j:\V\To\bar{\V}$ is isomorphic to the Boolean ultrapower of $\V$ by $U:=j^{-1}``F\sub\B$. In this case, $F$ is actually $\bar{\V}$-generic, and $\bar{\V}[F]$ is isomorphic to $\V^\B/U$.
\end{thm}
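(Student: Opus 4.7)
The plan is to construct an isomorphism $k_\infty:\check{\V}_U\To\bar{\V}$ that intertwines the Boolean ultrapower map (call it $j_U$) with $j$, and then lift it to the corresponding Boolean models. First I would verify that $U=j^{-1}``F$ is an ultrafilter on $\B$: for any $p\in\B$, $\{p,-p\}$ is a maximal antichain, so $F$ meets $j(\{p,-p\})=\{j(p),-j(p)\}$ in exactly one element, putting exactly one of $p,-p$ into $U$.

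Next, for a maximal antichain $A\sub\B$, I would identify the ultrafilter $U_A$ on $A$ via $b_A$: for $X\sub A$ with $X\in\V$, the set $\{\bigvee X,\bigvee(A\ohne X)\}$ is a maximal antichain of $\B$ whose image is $\{\bigvee j(X),\bigvee j(A\ohne X)\}$, and since $b_A\in j(A)$ lies in exactly one of $j(X)$, $j(A\ohne X)$, tracking which one contains $b_A$ decides which of the two suprema lies in $F$. Hence $X\in U_A\iff b_A\in j(X)$. With this, define $k_A:M_A\To\bar{\V}$ by $k_A([f]_{U_A})=j(f)(b_A)$. Well-definedness, elementarity, and injectivity then follow from the \Los-style calculation $\{a\in A:\phi(f(a))\}\in U_A\iff b_A\in\{a'\in j(A):\bar{\V}\models\phi(j(f)(a'))\}\iff\bar{\V}\models\phi(j(f)(b_A))$. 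Moreover $k_A\circ\pi_{0,A}=j$, because the constant function $c_x:A\To\{x\}$ satisfies $j(c_x)(b_A)=j(x)$.

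For a refinement $B\le^*A$ with refinement map $\alpha_{A,B}:B\To A$, the coherence $k_B\circ\pi_{A,B}=k_A$ reduces to the identity $j(\alpha_{A,B})(b_B)=b_A$: by elementarity $j(\alpha_{A,B})(b_B)$ is the unique member of $j(A)$ above $b_B$, and since $b_B\in F$ and $F$ is a filter, that member lies in $F\cap j(A)=\{b_A\}$. The maps $k_A$ therefore assemble into a direct limit embedding $k_\infty:\check{\V}_U\To\bar{\V}$ which is elementary, hence injective, and surjective by the displayed hypothesis, since every $j(f)(b_A)$ equals $k_\infty(\pi_{A,\infty}([f]_{U_A}))$. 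Thus $k_\infty$ is the desired isomorphism, with $k_\infty\circ j_U=j$.

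Finally, to obtain $\bar{\V}$-genericity of $F$ and $\bar{\V}[F]\isomorphic\V^\B/U$, I would transport the $j_U(\B)$-genericity of $G=[\dot{G}]_U$ over $\check{\V}_U$ along $k_\infty$: any maximal antichain $D\in\bar{\V}$ of $j(\B)$ has the form $k_\infty(D')$ for a maximal antichain $D'\in\check{\V}_U$ of $j_U(\B)$, and the unique point of $G\cap D'$ is sent by $k_\infty$ to the unique point of $F\cap D$. The isomorphism $k_\infty$ then extends canonically to an isomorphism $\check{\V}_U[G]\To\bar{\V}[F]$ sending $G$ to $F$, and composing with the identification $\check{\V}_U[G]=\V^\B/U$ yields the stated isomorphism. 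I expect the main subtlety to lie in pinning down the characterization $X\in U_A\iff b_A\in j(X)$ cleanly enough to drive the \Los-style verification, together with ensuring that the lift of $k_\infty$ to the Boolean models does send $G$ to precisely $F$ rather than to some other $j(\B)$-generic filter.
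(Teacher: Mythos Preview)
The paper does not prove this theorem; it is quoted verbatim from \cite[Theorem~38]{HamkinsSeabold:BULC} and used as a black box, so there is no proof in the paper to compare your proposal against.

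That said, your outline is essentially the standard argument and is correct. The characterization $X\in U_A\iff b_A\in j(X)$ is exactly right and drives the \Los{} computation cleanly: since $b_A\le j(\bigvee X)=\bigvee j(X)$ whenever $b_A\in j(X)$, and $F$ is a filter, one direction is immediate; the other follows from $j(A)=j(X)\cup j(A\ohne X)$. The coherence step is fine.

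On the subtlety you flag, namely that $k_\infty$ sends $G=[\dot{G}]_U$ to $F$: this does require a small argument, and the cleanest route is to note first that the displayed hypothesis forces $F$ to be an ultrafilter on $j(\B)$. Given $p=j(f)(b_A)\in j(\B)$ with $f:A\to\B$, refine $A$ to $B=\{a\wedge f(a):a\in A\}\cup\{a\wedge(-f(a)):a\in A\}$ (discarding zeros); then $b_B\le b_A$ and $b_B$ is either $b_A\wedge p$ or $b_A\wedge(-p)$, putting $p$ or $-p$ into $F$. Once $F$ is an ultrafilter, the same refinement shows $j(f)(b_A)\in F$ iff $\bigvee_{a\in A}(a\wedge f(a))\in U$, and the latter is exactly the condition for the corresponding element $\pi_{A,\infty}([f]_{U_A})$ to lie in $G$. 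Hence $k_\infty$ carries $G$ onto $F$, and the lift to $\check{\V}_U[G]\to\bar{\V}[F]$ goes through as you describe.
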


\begin{thm}[\Bukovsky, \cite{Bukovsky1977:IteratedUltrapowersAndPrikryForcing}]
\label{thm:OmegathIteratedUPIsBooleanUltrapowerOfPrikryForcing}
The $\omega$-th iterate $M_\omega$ of $\V$ by the normal measure $\mu$ on $\kappa$ is the Boolean ultrapower of $\V$ by the canonical ultrafilter $U$ on the Boolean algebra $\B:=\B_\mu$ of \Prikry{} forcing with respect to $\mu$, and the iterated ultrapower embedding $j_{0,\omega}$ is the same as the Boolean ultrapower embedding $j_U$.
\end{thm}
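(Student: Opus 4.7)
The plan is to apply Theorem \ref{thm:HamkinsSeaboldCriterionForBeingBooleanUltrapower} to the embedding $j := \pi_{0,\omega} : \V \to M_\omega$ together with the filter $F := G^*_\vkappa \sub j(\B)$. Note that $j(\B) = \pi_{0,\omega}(\B_\mu) = \B_{\mu_\omega}^{M_\omega}$, the Boolean algebra of Prikry forcing with respect to $\mu_\omega$ as computed in $M_\omega$. Given the theorem, there are two things to verify: (i) that $F$ is $\ran(j)$-generic, and (ii) that every element of $M_\omega$ can be written in the form $j(f)(b_A)$ for some maximal antichain $A\sub\B_\mu$ and some $f : A \to \V$. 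Once this is done, Theorem \ref{thm:HamkinsSeaboldCriterionForBeingBooleanUltrapower} delivers an isomorphism of $j$ with the Boolean ultrapower of $\V$ by $U := j^{-1}``F$, which is by definition the canonical ultrafilter on $\B_\mu$.

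For (i), observe that $F$ is actually $M_\omega$-generic, since Solovay's theorem (via Mathias's characterization of Prikry-generic sequences) asserts that $G_\vkappa$ is $\pi_{0,\omega}(\P_\mu)$-generic over $M_\omega$, and $F = G^*_\vkappa$ is merely the ultrafilter on $j(\B)$ that this generic filter generates. In particular, $F$ is generic over the smaller class $\ran(j)$. For (ii), use the direct limit presentation of $M_\omega$: every $x \in M_\omega$ arises as $x = \pi_{n,\omega}(y)$ for some $n<\omega$ and $y \in M_n$, and $y$ itself is of the form $y = \pi_{0,n}(g)(\kappa_0,\ldots,\kappa_{n-1})$ for some function $g : [\kappa]^n \to \V$ in $\V$. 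Commuting the embeddings, this rewrites as
\[
x \;=\; \pi_{0,\omega}(g)(\kappa_0,\ldots,\kappa_{n-1}) \;=\; j(g)(\vkappa\restriction n).
\]

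For each $n<\omega$, let $A_n := \{\,a_s \st s \in [\kappa]^n\,\}$ where $a_s := \BV{\dot{\vkappa}\restriction n = \check{s}}^{\B_\mu}$. Since a Prikry generic has order type $\omega$, these Boolean values are pairwise disjoint and sum to $\eins$, so $A_n$ is a maximal antichain in $\B_\mu$. Define $f : A_n \to \V$ by $f(a_s) = g(s)$. Then $j(A_n)$ is the analogously defined maximal antichain in $j(\B)$, indexed by $[\kappa_\omega]^n$ from the point of view of $M_\omega$, and $j(f)$ sends the element indexed by $t \in [\kappa_\omega]^n$ to $\pi_{0,\omega}(g)(t)$. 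Since $F$ is generated by $G_\vkappa$, the unique element of $j(A_n) \cap F$ is the Boolean value $\BV{\dot{\vkappa}\restriction n = (\vkappa\restriction n)^{\vee}}^{j(\B)}$, that is, the one indexed by the stem $\vkappa\restriction n$. Consequently $b_{A_n}$ corresponds to $\vkappa\restriction n$ and $j(f)(b_{A_n}) = \pi_{0,\omega}(g)(\vkappa\restriction n) = x$, as required.

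The main obstacle I anticipate is bookkeeping rather than conceptual: one has to keep straight what $j(A_n)$ and $b_{A_n}$ actually are in $M_\omega$, and match the Boolean-algebra description (via Boolean values of stem formulas) with the combinatorial description of $G_\vkappa$ in terms of Prikry conditions of the form $\kla{\vkappa\restriction n, A}$. Once the antichains $A_n$ are chosen to be indexed by stems of length $n$, the identification of $b_{A_n}$ with $\vkappa\restriction n$ is essentially forced, and the representation of an arbitrary $x \in M_\omega$ as $j(f)(b_{A_n})$ falls out of the standard representation of elements of the $n$-th iterate. Finally, the equality $j_U = j = \pi_{0,\omega}$ of embeddings is precisely the conclusion of Theorem \ref{thm:HamkinsSeaboldCriterionForBeingBooleanUltrapower} once (i) and (ii) are in hand.
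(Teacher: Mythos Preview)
Your proposal is correct and follows essentially the same route as the paper: both apply Theorem~\ref{thm:HamkinsSeaboldCriterionForBeingBooleanUltrapower} with $j=\pi_{0,\omega}$ and $F=G^*_\vkappa$, and both verify the seed representation by exhibiting, for each $n$, a maximal antichain indexed by length-$n$ stems and reading off $b_{A_n}$ as the element indexed by $\vkappa\rest n$. The only cosmetic difference is that the paper realizes $A_n$ concretely inside $\P_\mu$ (as the set of weakest conditions $s^*=\kla{s,\kappa\setminus\lub(\ran(s))}$ with a given stem $s$), whereas you describe the same elements as Boolean values $\BV{\dot{\vkappa}\rest n=\check{s}}$; these coincide in $\B_\mu$ since any two \Prikry{} conditions with the same stem are compatible.
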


\begin{proof} In order to apply the previous theorem with $j=\pi_{0,\omega}$, $\bar{\V}=M_\omega$ and $F=G^*_\vkappa$, it needs only to be checked that every member of $M_\omega$ has the form $\pi_{0,\omega}(f)(b_A)$, for some maximal antichain $A\sub\B$ and $f:A\To\V$, where $b_A$ is defined as in the theorem we are about to apply.

To see this, let $x\in M_\omega$ be given. Let $x=\pi_{0,\omega}(f)(\kappa_0,\ldots,\kappa_{n-1})$, where $f\in\V$ is a function whose domain is the set of increasing length $n$ sequences of ordinals less than $\kappa$. For notational simplicity, construe $\P_\mu$ as a dense subset of $\B$. If $s$ is a finite increasing sequence of ordinals less than $\kappa$, then let $s^*=\kla{s,\kappa\ohne\lub(\ran(s))}\in\P_\mu$ (so $s^*$ is the weakest condition with first coordinate $s$). Let
\[ A_n=\{s^*\st s:n\To\kappa\ \text{is strictly increasing}\}.\]
Obviously, $A_n$ is an antichain. It is also maximal in $\P$, and hence in $\B$, since $\P$ is dense in $\B$: Let $\kla{t,S}\in\P_\mu$ be given. If $|t|\ge n$, then $\kla{t,S}\le\kla{t\rest n,\kappa\ohne\lub(\ran(t))}$. If $|t|<n$, then $t$ can be extended to an increasing sequence $t'$ of length $n$ by appending the next $n-|t|$ elements of $S$ to $t$. Clearly then, $\kla{t',S\ohne\lub(\ran(t'))}$ is a common extension of $\kla{t,S}$ and $(t')^*$. So in both cases, we found a condition in $A_n$ compatible with $\kla{t,S}$.

It is obvious now that $b_{A_n}=\kla{\kappa_0,\ldots,\kappa_{n-1}}^*$, where we ambiguously use the notation $t^*$ in the context of $j_{0,\omega}(\P_\mu)$ with the obvious meaning. So if we define $f^*:A_n\To\V$ by $f^*(s^*)=f(s)$, it follows that \[x=\pi_{0,\omega}(f)(\kla{\kappa_0,\ldots,\kappa_{n-1}})=\pi_{0,\omega}(f^*)(\kla{\kappa_0,\ldots,\kappa_{n-1}}^*)=\pi_{0,\omega}(f^*)(b_{A_n}).\]
This shows that the theorem quoted above can be applied, completing the proof.
\end{proof}

\subsection{Longer iterations}
\label{subsec:LongerIterations}

In this section, we will see two examples of iterations of transfinite length that can be realized as single Boolean ultrapowers. First, let us point out some limitations, though.

\subsubsection{A limitation}

In the following theorem, we use the usual notation for the Mitchell order on normal ultrafilters. If $\mu$ and $\nu$ are normal ultrafilters on $\kappa$, then $\mu$ is less than $\nu$ in the Mitchell order if $\mu$ belongs to the transitivized ultrapower of $\V$ by $\nu$. It is well-known that this order is well-founded. If $\rho$ is the rank function on the set of normal ultrafilters on $\kappa$, then the range of $\rho$ is the order of $\kappa$, denoted $o(\kappa)$, and the order of a normal ultrafilter $\mu$ on $\kappa$ is $\rho(\mu)$, denoted $o(\mu)$.

\begin{thm}
If there is a measurable cardinal $\kappa$ and an $\alpha>\omega$ such that the $\alpha$-th iterate of $\V$ by a normal ultrafilter $\mu$ on $\kappa$ is a Boolean ultrapower of $\V$, then there is an inner model $N$ with a measurable cardinal $\kappa'$ such that $o(\kappa')^N>\kappa'$.
\end{thm}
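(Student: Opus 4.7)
The approach is to argue by contradiction, reducing to core model theory and a reversal of the Magidor and generalized Prikry realizations to be developed later in Section~\ref{section:IteratedUltrapowersAsSingleBooleanUltrapowers}. Suppose $M_\alpha = \check{\V}_U$ is a Boolean ultrapower of $\V$ by an ultrafilter $U$ on a complete Boolean algebra $\B \in \V$, for some $\alpha > \omega$, and suppose for contradiction that no inner model $N$ has a measurable cardinal $\kappa'$ with $o(\kappa')^N > \kappa'$. Under this anti-large-cardinal hypothesis, the appropriate Mitchell-style core model $K$ exists, is rigid and universal, and satisfies weak covering, with $o(\kappa')^K \le \kappa'$ at every measurable cardinal $\kappa'$ of $K$.

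The first main step is to reflect the Boolean ultrapower structure down to $K$. Since $\mu \cap K$ lies in $K$ and is a normal measure there (by covering and comparison of $K$), the iterated ultrapower $M^K_\alpha$ of $K$ by $\mu \cap K$ is well-defined and sits inside $M_\alpha \cap K$. Using the Hamkins-Seabold criterion (Theorem~\ref{thm:HamkinsSeaboldCriterionForBeingBooleanUltrapower}), one aims to show that $M^K_\alpha = \check{K}_{U^K}$ for a derived ultrafilter $U^K$ on an appropriate Boolean algebra inside $K$, obtained by restricting the seed representations $j_U(f)(b_A)$ to those with $f \in K$, so that the Boolean ultrapower presentation descends to $K$.

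The second main step is to derive the contradiction by analyzing what it means for $M^K_\alpha$ to be a single Boolean ultrapower of $K$ when $\alpha > \omega$. Reading the constructions of Section~\ref{section:IteratedUltrapowersAsSingleBooleanUltrapowers} in reverse, such a presentation requires that $K$ carry a sequence of measures on $\kappa$ whose Mitchell-order structure is rich enough to encode both the Prikry-generic $\omega$-sequence below $\kappa_\omega$ and the further iteration past $\kappa_\omega$. Iterating this analysis along the critical sequence, or applying it at the tail of the iteration where Mitchell-order demands compound, produces a measurable cardinal $\kappa'$ in $K$ with $o(\kappa')^K > \kappa'$, contradicting the defining property of $K$ and hence establishing the theorem.

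The main obstacle is the reflection step: since $\B$ need not lie in $K$, one must work with a $K$-constructible surrogate (plausibly the completion inside $K$ of $\B \cap K$) and verify that enough of the Boolean ultrapower structure survives restriction to give a genuine Boolean ultrapower of $K$. Tracking how the seeds $b_A$ behave under this restriction, and confirming that the induced $U^K$ is a genuine ultrafilter whose Boolean ultrapower recovers $M^K_\alpha$, is the delicate part. Once this is in hand, the contradiction follows from the Mitchell-order accounting underlying the constructions of Section~\ref{section:IteratedUltrapowersAsSingleBooleanUltrapowers}, read as necessary rather than sufficient conditions for single-step Boolean-ultrapower presentations of long iterations.
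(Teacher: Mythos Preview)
Your proposal has a genuine gap: the second main step, ``reading the constructions of Section~\ref{section:IteratedUltrapowersAsSingleBooleanUltrapowers} in reverse'', is not a valid argument. Those constructions show that certain iterations \emph{can} be presented as Boolean ultrapowers of specific algebras; nothing there, or anywhere else, says that an arbitrary Boolean-ultrapower presentation of $M_\alpha$ must come from a Magidor- or Prikry-type algebra, let alone that it forces $K$ to carry a long Mitchell-increasing sequence of measures on a fixed cardinal. The hypothesis gives you only some complete Boolean algebra $\B$ and an ultrafilter $U$ with $\check{\V}_U=M_\alpha$; $\B$ could be entirely unlike the algebras of Section~\ref{section:IteratedUltrapowersAsSingleBooleanUltrapowers}, so there is no ``reversal'' available. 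Your reflection step is also left as a hope rather than an argument: you acknowledge $\B$ need not lie in $K$ and propose an unspecified surrogate, but give no mechanism for why a Boolean-ultrapower structure should descend to $K$ at all.

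The paper's argument uses none of this. The key observation you are missing is that for \emph{any} Boolean ultrapower $j:\V\to\check{\V}_U$ with canonical generic $G=[\dot G]_U$, the full model $\V^\B/U=\check{\V}_U[G]$ is closed under $\omega$-sequences (since trivially $j``\omega\in\check{\V}_U$; cf.\ \cite[Theorem~28]{HamkinsSeabold:BULC}). Hence the set $C=\{\kappa_n:n<\omega\}$ lies in $M_\alpha[G]$. Because $\power(\kappa_\omega)\cap M_\omega=\power(\kappa_\omega)\cap M_\alpha$, the measure $\mu_\omega$ is definable from $C$ and $\power(\kappa_\omega)^{M_\alpha}$, so $\mu_\omega$ belongs to a set-forcing extension $M_\alpha[\mu_\omega]\subseteq M_\omega$ of $M_\alpha$. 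Now core-model theory enters in a different way than you suggest: by Mitchell's result that countably complete $K$-ultrafilters are in $K$ (below the anti-large-cardinal hypothesis), together with the forcing-absoluteness of $K$, one gets $\mu_\omega\cap K^{M_\alpha}\in K^{M_\alpha}$, and pulling back, $\mu\cap K^{M_1}\in K^{M_1}$. A short induction on $\delta<\kappa$ then shows $\{\gamma<\kappa: o(\gamma)^K\ge\delta\}\in\mu$ for all $\delta<\kappa$, yielding $o(\kappa)^K>\kappa$. The role of the Boolean ultrapower is thus purely to supply enough closure to capture the first $\omega$ critical points in a forcing extension of $M_\alpha$; no structural analysis of $\B$ is needed.
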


\begin{proof} Note that we are only assuming that the $\alpha$-th iterate of $\V$ by $\mu$, as a model, is a Boolean ultrapower, and not necessarily that the embedding from $\V$ into that iterate is a Boolean ultrapower embedding.

Let's assume that there is no inner model $N$ with a measurable cardinal $\kappa'$ such that in $N$, $o(\kappa')=(\kappa')^{++}$, because otherwise, we're done. Let $K$ be the core model. Then by a result of Mitchell \cite{Mitchell:CoreModelForSequencesOfMeasuresI}, every countably complete $K$-ultrafilter is in $K$ (where countable completeness here means that the intersection of countably many measure 1 sets is nonempty).

Now let $\mu$ be a normal ultrafilter on $\kappa$, and let $\seq{M_i}{i\le\alpha}$ be the iteration of $\V$ by $\mu$, with embeddings $\seq{\pi_{i,j}}{i\le j\le\alpha}$. Let $\mu_i=\pi_{0,i}(\mu)$, so that $\pi_{i,i+1}:M_i\emb{\mu_i}M_{i+1}$. Also, let $\B$ be a complete Boolean algebra, $U$ an ultrafilter on $\B$, and let $j:\V\emb{U}M_\alpha$ be the Boolean ultrapower, so $M_{\alpha}=\check{\V}_U$.
Let $G\sub j_U(\B)$ be the canonical $\check{\V}_U$-generic filter, $G=[\dot{G}]_U$. By \cite[Theorem 28]{HamkinsSeabold:BULC}, $\check{V}_U[G]$ is $\lambda$-closed in $\V$ iff $j``\lambda\in\check{V}_U$. In particular, $M_\alpha[G]=\check{\V}_U[G]$ is $\omega$-closed in $\V$ (and much more, of course, but this is all we need). Let $\kla{\kappa_\gamma\st\gamma<\alpha}$ be the sequence of critical points of the iteration, and let $C=\{\kappa_n\st n<\omega\}$. Then $C\in M_\alpha[G]$, by the closure. Clearly, for any $A\sub\kappa_\omega$ with $A\in M_\omega$, we have that $A\in\mu_\omega$ iff $C\ohne A$ is finite. Since $\power(\kappa_\omega)\cap M_\omega=\power(\kappa_\omega)\cap M_\alpha$, it follows that $\mu_\omega$ is definable in $M_\alpha[G]$, using $\power(\kappa_\omega)^{M_\alpha}$ as a parameter, and so, $\mu_\omega\in M_\alpha[G]$. So, since $\mu_\omega\sub M_\alpha$, this means that there is a minimal model of $\ZFC$, $M_\alpha[\mu_\omega]$, that contains $M_\alpha\cup\{\mu_\omega\}$, and this model is a forcing extension of $M_\alpha$. But since $M_\alpha\cup\{\mu_\omega\}\sub M_\omega$ (as $M_\alpha$ is definable in $M_\omega$), it follows that $M_\alpha[\mu_\omega]\sub M_\omega$. So since $\mu_\omega$ is a normal ultrafilter in $M_\omega$, it follows that $\mu_\omega$ is also a normal ultrafilter in $M_\alpha[\mu_\omega]$.

Using the result of Mitchell quoted above in $M_\alpha[\mu_\omega]$, since $\mu_\omega\cap K^{M_\alpha[\mu_\omega]}$ clearly is a countably closed $K^{M_\alpha[\mu_\omega]}$-ultrafilter, we can conclude that this filter is in $K^{M_\alpha[\mu_\omega]}$. By the forcing-absoluteness of the core model, $K^{M_\alpha}=K^{M_\alpha[\mu_\omega]}$. So, $\mu_\omega\cap K^{M_\alpha}\in K^{M_\alpha}$. Further, if $\alpha>\omega+1$, then $\crit(\pi_{\omega+1,\alpha})=\kappa_{\omega+1}$ and since $\power(\kappa_{\omega+1})^{M_{\omega+1}}=\power(\kappa_{\omega+1})^{M_{\alpha}}$, it follows that $\mu_\omega\cap K^{M_{\alpha+1}}=\mu_\omega\cap K^{M_{\omega+1}}\in K^{M_{\omega+1}}$. Pulling this back via $\pi_{0,\omega}^{-1}$ results in
\[\mu\cap K^{M_1}\in K^{M_1}\]
Let us now forget about the iteration and write $\pi=\pi_{0,1}$ and $M=M_1$. So we have
\[\pi:\V\emb{\mu}M\ \text{and}\ \mu\cap K^M\in K^M\]
For $\delta<\kappa$, letting
\[A_\delta=\{\gamma<\kappa\st o(\gamma)^K\ge\delta\}\]
we show by induction that $A_\delta\in\mu$.

The case $\delta=0$ is trivial, and the limit case is also clear, since in that case, $A_\delta=\bigcap_{\bdelta<\delta}A_{\bdelta}$.

So let's assume that $A_{\delta-1}\in\mu$. Then
\[A_{\delta-1}=\pi(A_{\delta-1})\cap\kappa=\{\gamma<\kappa\st o(\gamma)^{K^M}\ge\delta-1\}\in\mu\cap K^M\]
so that $o(\mu\cap K^M)^{K^M}\ge\delta-1$, which means that $o(\kappa)^{K^M}\ge\delta$. But then,
\[\kappa\in\pi(A_\delta)\]
which means that $A_\delta\in\mu$, by normality of $\mu$.

Applying Mitchell's result in $\V$ yields, of course, that $\mu\cap K\in K$, and clearly, $A_\delta\in\mu\cap K$, for all $\delta<\kappa$. This means that $o(\mu\cap K)^K\ge\kappa$, and hence, $o^K(\kappa)>\kappa$. \end{proof}

\medskip

In order to formulate another similar restriction, let's say that a cardinal $\kappa$ is \emph{absolutely the least measurable cardinal} if $\kappa$ is measurable and there is no poset $\P$ that forces that there is a smaller measurable cardinal. Note that every model with a measurable cardinal has a forcing extension with an absolutely least measurable cardinal. It is understood that when writing ``forcing'', we refer to ``set forcing'', unless we specifically write ``class forcing.''

\begin{thm}
If $\kappa$ is absolutely the least measurable cardinal, then for any $\alpha>\omega$, the $\alpha$-th iterate of $\V$ by a normal ultrafilter on $\kappa$ is not a Boolean ultrapower of $\V$.
\end{thm}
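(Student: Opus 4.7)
The plan is to argue by contradiction. Suppose $M_\alpha = \check{\V}_U$ is the Boolean ultrapower by some ultrafilter $U$ on a complete Boolean algebra $\B$, with $\alpha > \omega$. The strategy is to produce a set-forcing extension of $M_\alpha$ that contains a measurable cardinal strictly below $\kappa_\alpha = \pi_{0,\alpha}(\kappa)$, and contradict this against an elementarity-transferred form of the hypothesis.

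First I would verify that the statement ``$\kappa$ is measurable, and no set-sized partial order forces the existence of a measurable cardinal below $\kappa$'' can be written as a first-order formula $\phi(\kappa)$ of $\ZFC$ with $\kappa$ as its only free variable, since set forcing and its forcing relation are definable and the quantifier ranges only over sets. Applying the iteration embedding $\pi_{0,\alpha}:\V \to M_\alpha$, elementarity then yields $M_\alpha \models \phi(\kappa_\alpha)$, i.e., no set forcing over $M_\alpha$ produces a measurable cardinal below $\kappa_\alpha$.

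The second step essentially reuses machinery from the preceding theorem. Let $G = [\dot{G}]_U$ be the canonical $j(\B)$-generic filter; by \cite[Theorem 28]{HamkinsSeabold:BULC}, $M_\alpha[G]$ is $\omega$-closed in $\V$ (since $j``\omega = \omega \in M_\alpha$), so the critical sequence $C = \{\kappa_n : n < \omega\}$ belongs to $M_\alpha[G]$. Using $\power(\kappa_\omega)^{M_\alpha} = \power(\kappa_\omega)^{M_\omega}$ together with the characterization that $A \in \mu_\omega$ iff $C \ohne A$ is finite, one sees that $\mu_\omega$ is definable in $M_\alpha[G]$ from $C$ and $\power(\kappa_\omega)^{M_\alpha}$, hence $\mu_\omega \in M_\alpha[G]$. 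By Solovay's intermediate model theorem, the minimal inner model $M_\alpha[\mu_\omega]$ sitting between $M_\alpha$ and $M_\alpha[G]$ is itself a set-forcing extension of $M_\alpha$. Since $M_\alpha \cup \{\mu_\omega\} \subseteq M_\omega$ (both being definable in $M_\omega$ from $\mu_\omega$), we have $M_\alpha[\mu_\omega] \subseteq M_\omega$, and the normal-ultrafilter property of $\mu_\omega$ at $\kappa_\omega$ transfers downward to the smaller model $M_\alpha[\mu_\omega]$. Thus $\kappa_\omega$ is measurable in $M_\alpha[\mu_\omega]$, while $\alpha > \omega$ gives $\kappa_\omega < \kappa_\alpha$, directly contradicting the elementarity-transferred statement from the first step.

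The main point to be careful about is the elementarity step: one has to write down $\phi(\kappa)$ precisely as a ZFC-formula, but the hypothesis's restriction to set forcing (clarified in the paragraph preceding the theorem) makes this completely routine. Beyond that, the proof is essentially bookkeeping, recycling the inclusion chain $M_\alpha[\mu_\omega] \subseteq M_\alpha[G]$ together with $M_\alpha[\mu_\omega] \subseteq M_\omega$ that was already used in the previous theorem's argument.
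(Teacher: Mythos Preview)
Your proof is correct and follows essentially the same route as the paper's: both argue by contradiction, transfer the first-order property ``absolutely least measurable'' to $\kappa_\alpha$ in $M_\alpha$ by elementarity, and then recycle the previous theorem's argument to produce the set-forcing extension $M_\alpha[\mu_\omega]\sub M_\omega$ of $M_\alpha$ in which $\kappa_\omega<\kappa_\alpha$ is measurable. Your write-up is in fact slightly more explicit than the paper's (which simply invokes ``the same reasoning'' as the preceding proof), e.g.\ in naming the intermediate-model theorem and spelling out the first-order expressibility step.
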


\begin{proof} Using the notation of the proof of the previous theorem, and using the same reasoning, it follows that $\mu^\omega$ is  a normal ultrafilter in $M_\alpha[\mu^\omega]$, a forcing extension of $M_\alpha$. So $\kappa_\omega$ is measurable in $M_\alpha[\mu^\omega]$. As $\kappa$ is absolutely the least measurable cardinal in $\V$, and since this is a first order property, it follows that $\kappa_\alpha$ is absolutely the least measurable cardinal in $M_\alpha$. But $M_\alpha[\mu^\omega]$ is a forcing extension of $M_\alpha$ in which $\kappa_\omega$, a cardinal less than $\kappa_\alpha$, is measurable. This is a contradiction. \end{proof}

\subsubsection{Magidor Forcing}

On the positive side, Magidor forcing allows for examples of long iterations in which the critical point of the next normal ultrafilter applied is the image of the previous one: Let $\seq{U_\gamma}{\gamma<\alpha}$ be a sequence of normal ultrafilters on $\kappa>\alpha$, increasing in the Mitchell order, and let $\seq{M_\gamma}{\gamma\le\alpha}$ and $\seq{j_{\gamma,\delta}}{\gamma\le\delta\le\alpha}$ be defined by letting $M_0=\V$, $\pi_{0,0}=\id$. If $\seq{M_\gamma}{\gamma\le\xi}$ and $\seq{\pi_{\gamma,\delta}}{\gamma\le\delta\le\xi}$ are already defined and $\xi<\alpha$, then let $\pi_{\xi,\xi+1}:M_\xi\To_{j_{0,\xi}(U_\xi)}M_{\xi+1}$, $\pi_{\xi+1,\xi+1}=\id$ and for $\zeta<\xi$, let $\pi_{\zeta,\xi+1}=\pi_{\xi,\xi+1}\circ\pi_{\zeta,\xi}$. If
If $\seq{M_\gamma}{\gamma<\lambda}$ and $\seq{\pi_{\gamma,\delta}}{\gamma\le\delta<\lambda}$ are already defined, where $\lambda\le\alpha$ is a limit ordinal, then $M_\lambda$, $\seq{\pi_{\gamma,\lambda}}{\gamma\le\lambda}$ is the direct limit of that system.
Let $\seq{\kappa_\gamma}{\gamma<\alpha}$ be the sequence of critical points of that iteration. For $\gamma<\delta<\alpha$, let $U_\gamma=[f^\delta_\gamma]_{U_\delta}$ (such functions exist because the $\vU$ sequence is increasing in the Mitchell order). Let $\MF=\MF(\vU,\vf)$ be the Magidor forcing associated to $\vU$ and $\vf$.

\begin{thm}[\cite{Fuchs:MagidorForcing}, \cite{Dehornoy:IteratedUltrapowersChangingCofinalities}]
The sequence $\seq{\kappa_\gamma}{\gamma<\alpha}$ is a Magidor sequence over $M_\alpha$, that is, it gives rise to a filter $G_\vkappa$ which is generic over $M_\alpha$ for the forcing $\pi_{0,\alpha}(\MF)$.
\end{thm}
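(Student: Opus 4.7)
The plan is to follow the Solovay/Mathias paradigm adapted to Magidor forcing: show that for every dense open $D\in M_\alpha$ of $\pi_{0,\alpha}(\MF)$, the filter $G_\vkappa$ meets $D$. The argument splits into a representation step, a diagonalization step in $\V$, and a verification step obtained by applying $\pi_{0,\alpha}$.

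\textbf{Step 1 (Representation).} Every $D\in M_\alpha$ has the form $D=\pi_{0,\alpha}(F)(\kappa_{\gamma_0},\ldots,\kappa_{\gamma_{n-1}})$ for some increasing $\gamma_0<\cdots<\gamma_{n-1}<\alpha$ and a function $F\in\V$ whose domain is the set of strictly increasing $n$-tuples below $\kappa$ and whose values are dense open subsets of $\MF=\MF(\vU,\vf)$. After replacing $D$ by an extension of a fixed finite stem coming from $\vkappa$, one may assume $F(\vec{s})$ consists of conditions whose stems extend $\vec{s}^\ast$. Reducing to the parameter-free case $n=0$ is standard once one can diagonalize the family $\{F(\vec{s})\st \vec{s}\in[\kappa]^n\}$ into a single dense set, which is the content of the next step.

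\textbf{Step 2 (Diagonalization in $\V$).} For each legal stem $\vec{s}$ of length $n$, choose, inside $\V$, a condition $p(\vec{s})=\langle \vec{s}^{\ast},\vec{A}(\vec{s})\rangle\in F(\vec{s})$. Using the coherence functions $f^\delta_\gamma$ (which allow measure-one sets for $U_\gamma$ to be read off canonically from measure-one sets for $U_\delta$, $\gamma<\delta$) and the normality of each $U_\gamma$, amalgamate the family $\{\vec{A}(\vec{s})\}_{\vec{s}}$ into one coherent measure-one assignment $\vec{A}^\ast$ by a transfinite normal-diagonalization, obtaining a condition $p^\ast=\langle\emptyset,\vec{A}^\ast\rangle\in\MF$ such that \emph{any} stem $\vec{s}$ drawn from $\vec{A}^\ast$ yields an extension of $p(\vec{s})$ (up to shrinking the tail, which is harmless). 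The role of $\vf$ is essential here: it is what lets the construction be performed inside $\V$ by a single function, so that its image under $\pi_{0,\alpha}$ makes uniform sense in $M_\alpha$.

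\textbf{Step 3 (Verification).} Apply $\pi_{0,\alpha}$ to $p^\ast$. By the standard fact about iterated ultrapowers that $\kappa_\gamma\in\pi_{0,\alpha}(A)$ whenever $A\in U_\gamma$ (with parameters chosen from earlier critical points), each $\kappa_\gamma$ lies in $\pi_{0,\alpha}(A^\ast)_\gamma$; hence every initial segment $\vkappa\rest\beta$ of the critical sequence is a legal stem for $\pi_{0,\alpha}(p^\ast)$, so $\pi_{0,\alpha}(p^\ast)\in G_\vkappa$. Elementarity of $\pi_{0,\alpha}$ transports the amalgamation property of $p^\ast$: $\pi_{0,\alpha}(p^\ast)$ extends $\pi_{0,\alpha}(p)(\kappa_{\gamma_0},\ldots,\kappa_{\gamma_{n-1}})\in D$, so $G_\vkappa$ meets $D$. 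Since $D$ was arbitrary, $G_\vkappa$ is $\pi_{0,\alpha}(\MF)$-generic over $M_\alpha$.

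\textbf{Main obstacle.} The genuine new difficulty beyond Solovay's $\omega$-length Prikry argument is performing the diagonalization over the \emph{transfinite} coordinate set $\alpha$ and simultaneously over all finite substems, while keeping the outcome an object in $\V$. This is precisely what the coherent-sequence data $(\vU,\vf)$ was designed to support: it packages the $\alpha$ measures and their interrelations into finite data per ordinal, so a single normal-measure fusion in $\V$ suffices and survives the pushforward by $\pi_{0,\alpha}$.
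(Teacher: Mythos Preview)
The paper does not give its own proof of this theorem; it is quoted as a known result with citations to \cite{Fuchs:MagidorForcing} and \cite{Dehornoy:IteratedUltrapowersChangingCofinalities}. So there is no in-paper argument to compare against directly. That said, your outline is in the right spirit but has a genuine gap, and it differs in strategy from the cited sources.

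\textbf{The gap.} In Step~2 you write ``choose $p(\vec{s})=\langle\vec{s}^*,\vec{A}(\vec{s})\rangle\in F(\vec{s})$''. This presupposes that the dense open set $F(\vec{s})$ contains a \emph{direct} extension of the weakest condition with stem $\vec{s}$, i.e.\ a condition with stem exactly $\vec{s}$. That is false in general: dense open sets in Magidor forcing may require the stem to be lengthened. What is true---and is the real work---is the strong \Prikry{} property for $\MF$: for any condition $\langle s,T\rangle$ and any dense open $D$ there is a direct extension $\langle s,T'\rangle$ and a finite set $a\sub\alpha$ such that every extension of $\langle s,T'\rangle$ whose stem has domain containing $\dom(s)\cup a$ lies in $D$. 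Without invoking (and proving) this, your diagonalization in Step~2 cannot get off the ground, and the ``amalgamation'' you describe is precisely where the coherence data $\vf$ and the \Prikry{} property interact nontrivially. Step~3 also needs adjustment: $\pi_{0,\alpha}(p^*)$ has empty stem, so it cannot literally extend a condition with stem $(\kappa_{\gamma_0},\ldots,\kappa_{\gamma_{n-1}})$; you mean that the stem-extension of $\pi_{0,\alpha}(p^*)$ by the relevant $\kappa_{\gamma_i}$'s (together with the witnesses from the finite set $a$) lands in $D$.

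\textbf{Comparison with the cited approach.} The references proceed differently: they first establish a Mathias-type characterization of $\MF$-generic sequences (a sequence is generic iff it is eventually inside every coherent system of measure-one sets, in the appropriate sense), and then verify that the critical sequence of the iteration satisfies that criterion. This cleanly isolates the combinatorial core (the \Prikry{}/Mathias lemma for $\MF$) from the iteration-theoretic verification. Your direct density argument can be made to work, but only after you supply the strong \Prikry{} property for Magidor forcing---which is essentially the same combinatorial content, packaged less transparently.
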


Let $\MF^*$ be the complete Boolean algebra of $\MF$, and let $G_\vkappa^*$ be the ultrafilter in $\pi_{0,\alpha}(\MF^*)$ generated by $G_\vkappa$.

\begin{thm}
\label{thm:DirectLimitAsUltrapowerOfMagidorAlgebra}
$M_\alpha$ is the Boolean ultrapower of $\V$ by the ultrafilter $F=(\pi_{0,\alpha}^{-1})``G^*_\vkappa$ in the Boolean algebra $\MF^*$.
\end{thm}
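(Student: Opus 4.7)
The plan is to apply Theorem~\ref{thm:HamkinsSeaboldCriterionForBeingBooleanUltrapower} with $j = \pi_{0,\alpha}$, $\bar{\V} = M_\alpha$, $\B = \MF^*$, and $F = G^*_\vkappa$. Genericity of $G^*_\vkappa$ over $M_\alpha$ is already provided by the preceding theorem, so the remaining task is to verify the representation condition: every $x \in M_\alpha$ must be expressible as $\pi_{0,\alpha}(f)(b_A)$ for some maximal antichain $A \subseteq \MF^*$ and some function $f : A \to \V$ lying in $\V$.

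First I would establish, by induction on $\xi \le \alpha$, that every element of $M_\xi$ admits a representation of the form $\pi_{0,\xi}(f)(\kappa_{\gamma_1},\ldots,\kappa_{\gamma_n})$, for some finite increasing sequence $\gamma_1 < \cdots < \gamma_n < \xi$ and some function $f \in \V$ whose domain consists of strictly increasing $n$-tuples below $\kappa$. The successor step is immediate from the \Loz{} theorem for the ultrapower by $\pi_{0,\xi}(U_\xi)$, and the limit step is handled by tracing an element of the direct limit back to some $M_\zeta$ with $\zeta < \xi$ and applying the inductive hypothesis.

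Next, imitating the Prikry construction of Theorem~\ref{thm:OmegathIteratedUPIsBooleanUltrapowerOfPrikryForcing}, for each finite increasing $\vgamma = \langle \gamma_1,\ldots,\gamma_n\rangle$ I would define a maximal antichain $A_\vgamma$ in $\MF$ (viewed as dense in $\MF^*$) consisting of the weakest Magidor conditions whose stem is a strictly increasing sequence $\langle \beta_1,\ldots,\beta_n\rangle$ below $\kappa$, with the $i$-th slot tagged by the ultrafilter $U_{\gamma_i}$ and the large-set components chosen to be the maximal ones compatible with this stem. Maximality of $A_\vgamma$ follows from the density of stems in Magidor forcing: any condition $q$ can be extended by picking further increasing ordinals from its measure-one sets until its stem contains the required pattern with the correct tags. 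Using the description of $G_\vkappa$ from the preceding theorem, one then checks that $b_{A_\vgamma}$ is the unique condition of $\pi_{0,\alpha}(A_\vgamma)$ with stem $\langle \kappa_{\gamma_1},\ldots,\kappa_{\gamma_n}\rangle$. Given $x = \pi_{0,\alpha}(f)(\kappa_{\gamma_1},\ldots,\kappa_{\gamma_n})$, defining $f^* : A_\vgamma \to \V$ by sending the condition with stem $\langle \beta_1,\ldots,\beta_n\rangle$ to $f(\beta_1,\ldots,\beta_n)$, elementarity yields $\pi_{0,\alpha}(f^*)(b_{A_\vgamma}) = x$, which is exactly the form the criterion demands.

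The main obstacle I anticipate is the combinatorial bookkeeping for Magidor stems, since each stem position carries a tag from a specific ultrafilter in the Mitchell-increasing sequence $\vU$, and permissible stems are constrained by the connecting functions $f^\delta_\gamma$. Some care is needed to confirm that $A_\vgamma$ is genuinely maximal (rather than merely an antichain), and that the condition of $\pi_{0,\alpha}(A_\vgamma)$ selected by $G_\vkappa$ really carries the intended tags and the stem $\langle \kappa_{\gamma_1},\ldots,\kappa_{\gamma_n}\rangle$; this rests on a careful reading of how the critical sequence interacts with the images of the $U_\gamma$'s under $\pi_{0,\alpha}$, including the fact that $\pi_{0,\alpha}(U_{\gamma_i})$ and $\kappa_{\gamma_i}$ fit together in the Mitchell ordering exactly as needed to make the above stem legal in $\pi_{0,\alpha}(\MF)$.
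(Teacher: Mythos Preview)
Your proposal is correct and follows essentially the same route as the paper: both apply Theorem~\ref{thm:HamkinsSeaboldCriterionForBeingBooleanUltrapower} and, for each finite $a=\{\gamma_1,\ldots,\gamma_n\}\subseteq\alpha$, build the maximal antichain of weakest Magidor conditions whose stem has domain $a$, then identify $b_{A_a}$ as the condition with stem $\vkappa\rest a$. The paper phrases this as $A_a=\{\langle s,T_s\rangle : s\in I_a\}$ where $I_a$ is the set of admissible stems with domain $a$ and $T_s$ is the maximal compatible large-set component; your ``tagged stem'' formulation is the same object, and your explicit induction for the representation $x=\pi_{0,\alpha}(f)(\kappa_{\gamma_1},\ldots,\kappa_{\gamma_n})$ is a point the paper simply takes for granted.
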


\begin{proof} Let $j=\pi_{0,\alpha}:\V\To M_\alpha$. We again want to use Theorem \ref{thm:HamkinsSeaboldCriterionForBeingBooleanUltrapower} (that is, \cite[Theorem 38]{HamkinsSeabold:BULC}). As in the proof of the corresponding theorem for \Prikry{} forcing, let's construe $\MF^*$ to have $\MF$ as a dense subset. For every finite subset $a$ of $\alpha$, let $I_a$ be the collection of $s$ with domain $a$ such that there is a $T$ with $\kla{s,T}\in\MF$. Note that if $s\in I_a$, then there is a weakest $T$ witnessing this, call it $T_s$ (for $\xi<\min(a)$, $T_s(\xi)=s(\min(a))$, for $\xi>\max(a)$, $T_s(\xi)=\kappa$, and for $\min(a)<\xi<\max(a)$, $\xi\notin a$, $T_s(\xi)=s(\min(a\ohne(\xi+1)))\ohne s(\max(a\cap\xi))+1$).
Let
\[A_a=\{\kla{s,T_s}\st s\in I_a\}.\]
It is now easy to see that $A_a$ is a maximal antichain in $\MF$. Clearly, it is an antichain. To see that it is maximal, we have to show that any condition $\kla{r,S}$ in $\MF$ is compatible with some condition in $A_a$. First, it is clear that there is a condition $\kla{r',S'}\le\kla{r,S}$ with $a\sub\dom(r')$ - this follows from  \cite[Lemma 3.2]{Magidor78:ChangingCofinalitiesOfCardinals}. But clearly then, $s:=r'\rest a\in I_a$, and so, $\kla{r',S'}\le\kla{s,T_s}$ and $\kla{r',S'}\le\kla{r,S}$. So $\kla{r,S}$ is compatible with $\kla{s,T_s}\in A_a$. So $A_a$ is a maximal antichain in $\MF$, and hence in $\MF^*$, as $\MF$ is dense in $\MF^*$. To see that the assumptions of Theorem \ref{thm:HamkinsSeaboldCriterionForBeingBooleanUltrapower} are satisfied, let $x\in M_\alpha$. Let $x=j(f)(\kappa_{\xi_0},\ldots,\kappa_{\xi_{n-1}})$, $f:\kappa^n\To\V$. Let $a=\{\xi_0,\ldots,\xi_{n-1}\}$. Define $f^*:A_a\To\V$ by $f^*(\kla{s,T_s})=f(s)$. Clearly, $b_{A_a}$, the unique condition in $j(A_a)$ whose first component has domain $a$ and which belongs to the generic filter corresponding to the critical sequence, must be $\kla{\vkappa\rest a,T_{\vkappa\rest a}}$. So $j(f^*)(b_{A_a})=j(f)(\vkappa\rest a)=x$. The claim now follows from Theorem \ref{thm:HamkinsSeaboldCriterionForBeingBooleanUltrapower}. \end{proof}

\subsubsection{Generalized \Prikry{} Forcing}
\label{subsubsec:GeneralizedPrikryForcing}

Another example where long iterations can be realized as single Boolean ultrapowers is a generalization of \Prikry{} forcing that was analyzed in great detail in  \cite{Fuchs:COPS}. The starting point for such forcing notions is a discrete set of measurable cardinals $D$, with monotone enumeration $\vec{\kappa}=\seq{\kappa_i}{i<\alpha}$ and order type $\alpha$, a corresponding sequence $\vec{U}=\seq{U_i}{i<\alpha}$ such that $U_i$ is a normal ultrafilter on $\kappa_i$, and a sequence $\seq{\eta_i}{i<\alpha}$ of ordinals in $[1,\omega]$. The forcing $\P=\P_{\vU,\veta}$ will add a set of ordinals of order type $\eta_i$ below $\kappa_i$, for each $i<\alpha$. In case $\eta_i=\omega$, that set will be cofinal in $\kappa_i$, so that the cofinality of $\kappa_i$ will become $\omega$. If $\eta_i<\omega$, then the cofinality of $\kappa_i$ will remain unchanged. Conditions in $\P$ are pairs $\kla{s,T}$, where $s$ is a function whose domain is a finite subset of $\alpha$, and for every $i\in\dom(s)$, $s(i)\sub\kappa_i\ohne\sup_{j<i}\kappa_j$ is finite and has size in $[1,\eta_i]$. By convention, $s(i)$ is taken to be $\leer$ if $i\notin\dom(s)$, and similarly for $T(i)$. The domain of the function $T$ consists of all $i<\alpha$ with $|s(i)|<1+\eta_i$, and $T(i)\in U_i$, $s(i)\sub\min(T(i))$ for all $i\in\dom(T)$. The ordering is defined in the natural way: $\kla{s',T'}\le\kla{s,T}$ if for all $i<\alpha$, $s(i)\sub s'(i)$, $s'(i)\ohne s(i)\sub T(i)$, and for all $i<\alpha$, $T'(i)\sub T(i)$.

Here, we will focus on the case that $\eta_i=1$, for all $i<\alpha$, to simplify the notation - everything should go through in the general setting as well. So we will suppress any mention of $\veta$.

In \cite{Fuchs:COPS}, an iterated ultrapower, called the imitation of $\P$, was constructed. In the special case where $\eta_i=1$, the construction proceeds as follows. We start with the model $M_0=\V$, $\pi_{0,0}=\id$. If $M_i$ has been constructed already, together with embeddings $\pi_{\gamma,\delta}$, for $\gamma\le\delta\le i$, then we let
\[\pi_{i,i+1}:M_i\emb{W_i}M_{i+1}\]
be the ultrapower embedding by $W_i=\pi_{0,i}(\vU)_i$, where $M_{i+1}$ is transitive. Then, as usual, for $\gamma<i$, we let $\pi_{\gamma,i+1}=\pi_{i,i+1}\pi_{\gamma,i}$. At limit $\lambda$, we let $M_\lambda$, together with the embeddings $\pi_{\gamma,\lambda}$, for $\gamma<\lambda$, be the transitivized direct limit of the system constructed thus far. We carry out this construction until a point is reached at which $W_i$ is undefined, that is, a point at which $i=\pi_{0,i}(\alpha)$. Let us write $\tilde{\alpha}$ for that point, so $M_{\tilde{\alpha}}$ is the last model in the imitation iteration of $\P$. Let us also write $\lambda_i=\pi_{0,i}(\vkappa)_i$. So this is the critical point of $\pi_{i,i+1}$; it is the measurable cardinal (in the sense of $M_i$) that $W_i$ lives on. It will also be a useful shorthand to write $\alpha_i$ for $\pi_{0,i}(\alpha)$. So $\tilde{\alpha}=\alpha_{\tilde{\alpha}}$, and $\tilde{\alpha}$ is least with that property. The main fact on the imitation iteration of $\P$ that we need here is the following corollary (adapted to the special case $\eta_i=1$):

\begin{thm}[{\cite[Corollary 2]{Fuchs:COPS}}]
\label{thm:CriticalSequenceOfImitationIsGeneric}
The sequence $\seq{\lambda_i}{i<\tilde{\alpha}}$ of critical points of the imitation of $\P$ gives rise to an $M_{\tilde{\alpha}}$-generic filter on $\pi_{0,\tilde{\alpha}}(\P)$.
\end{thm}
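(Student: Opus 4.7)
The plan is to adapt Solovay's classical argument that the critical sequence of a length-$\omega$ iteration by a normal measure is \Prikry{}-generic over the limit model. The argument splits into two ingredients. First, I would establish a Mathias-style characterization of generic filters for $\pi_{0,\tilde\alpha}(\P)$: a sequence $\vlambda=\seq{\lambda_i}{i<\tilde\alpha}$ with $\lambda_i<\pi_{0,\tilde\alpha}(\vkappa)_i$ generates an $M_{\tilde\alpha}$-generic filter iff, for every $\vT=\seq{T_i}{i<\tilde\alpha}\in M_{\tilde\alpha}$ with $T_i\in\pi_{0,\tilde\alpha}(\vU)_i$, the set $\{i<\tilde\alpha : \lambda_i\notin T_i\}$ is bounded in $\tilde\alpha$. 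This criterion itself rests on a Prikry-style decision lemma for $\pi_{0,\tilde\alpha}(\P)$: every statement in the forcing language is decided by a pure extension, so meeting a dense set reduces to adjusting only the measure-one components of a condition on a tail of coordinates.

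Second, I would verify the criterion for the sequence of critical points of the imitation. Given $\vT\in M_{\tilde\alpha}$ as above, pick $j<\tilde\alpha$ so that $\vT=\pi_{j,\tilde\alpha}(\vT^{(j)})$ for some $\vT^{(j)}\in M_j$ with $(\vT^{(j)})_{i'}\in\pi_{0,j}(\vU)_{i'}$ for each $i'<\alpha_j$. For every $i\in[j,\tilde\alpha)$ expressible as $i=\pi_{j,\tilde\alpha}(i')$, elementarity yields $T_i=\pi_{j,\tilde\alpha}((\vT^{(j)})_{i'})$, and applying $\pi_{j,i}$ in place of $\pi_{j,\tilde\alpha}$ shows that $\pi_{j,i}((\vT^{(j)})_{i'})\in W_i$. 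Since $\lambda_i=\crit(\pi_{i,i+1})$ and $W_i$ is normal in $M_i$, the standard ultrapower fact about critical points of normal measures gives $\lambda_i\in\pi_{i,i+1}(\pi_{j,i}((\vT^{(j)})_{i'}))$, and pushing this forward through $\pi_{i+1,\tilde\alpha}$ yields $\lambda_i\in T_i$. Hence $\{i : \lambda_i\notin T_i\}$ is contained in an initial segment below $j$.

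The main obstacle is the bookkeeping between the index sets of the sequences $\vT^{(k)}$ at different iteration stages, since $\tilde\alpha$ is the least fixed point of the map $i\mapsto\alpha_i=\pi_{0,i}(\alpha)$, and the maps $\pi_{j,\tilde\alpha}$ stretch $\alpha_j$ to $\tilde\alpha$. One must confirm that cofinally many $i<\tilde\alpha$ lie in $\ran(\pi_{j,\tilde\alpha}\rest\alpha_j)$, and that small individual indices below $\crit(\pi_{j,\tilde\alpha})=\lambda_j$ are fixed by $\pi_{j,\tilde\alpha}$, so that the representations above are meaningful for all $i$ in a tail of $\tilde\alpha$. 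Once this indexing is in place, normality of each $W_i$ together with the Prikry-style decision lemma deliver the criterion, and the genericity of the filter generated by the critical sequence follows, as detailed in \cite[Corollary 2]{Fuchs:COPS}.
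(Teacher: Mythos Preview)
Your overall strategy---reduce to a Mathias-style criterion and then verify it for the critical sequence---is exactly the route the paper indicates (it quotes the criterion from \cite[Theorem~1]{Fuchs:COPS} and calls the verification ``straightforward''). However, there is a genuine gap in your execution.

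The Mathias criterion you state is too weak: the correct condition is that $\{i<\tilde\alpha:\lambda_i\notin T_i\}$ be \emph{finite}, not merely bounded. Conditions in $\pi_{0,\tilde\alpha}(\P)$ have finite stem $s$, so the filter generated by $\vlambda$ contains a condition below $\langle\emptyset,\vT\rangle$ only if all but finitely many $\lambda_i$ lie in $T_i$. Once $j\ge\omega$, ``contained in an initial segment below $j$'' is strictly weaker than ``finite'', so even if your verification succeeded it would not establish genericity via the actual criterion. Moreover, your verification only treats those $i\in[j,\tilde\alpha)$ that happen to lie in $\ran(\pi_{j,\tilde\alpha})$; in the medium or long case this range can omit many $i\ge j$, and the remark that \emph{cofinally many} $i$ lie in it does not close that gap.

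Both problems disappear if you argue coordinate by coordinate instead of pulling all of $\vT$ back to a single stage $j$. For each $i<\tilde\alpha$, since $\lambda_i=\crit(\pi_{i,i+1})$, in $M_{i+1}$ one has $\lambda_i\in\bigcap\pi_{i,i+1}(W_i)=\bigcap\pi_{0,i+1}(\vU)_i$. As both $i$ and $\lambda_i$ are below $\lambda_{i+1}=\crit(\pi_{i+1,\tilde\alpha})$, elementarity of $\pi_{i+1,\tilde\alpha}$ gives $\lambda_i\in\bigcap\pi_{0,\tilde\alpha}(\vU)_i$ in $M_{\tilde\alpha}$, and in particular $\lambda_i\in T_i$. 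Thus the set of exceptions is empty, hence trivially finite; this is presumably what the paper means by ``straightforward''.
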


The proof of this theorem makes use of a Mathias-like characterization of genericity of generalized \Prikry{} sequences, which says, again in our special case (but there is a general version) that a sequence $\seq{\gamma_i}{i<\alpha}$ with $\gamma_i\in[\sup_{j<i}\kappa_j,\kappa_i)$ for all $i<\alpha$ is generic over $\V$ if for every sequence $\seq{X_i}{i<\alpha}$ in $\V$ such that $X_i\in U_i$ for all $i$, $\gamma_i\in X_i$ for all but at most finitely many $i<\alpha$ (see \cite[Theorem 1]{Fuchs:COPS}). Knowing this characterization, it is straightforward to check that Theorem \ref{thm:CriticalSequenceOfImitationIsGeneric} holds, that is, that the critical sequence is generic over the limit model.

One can do a product analysis of generalized \Prikry{} forcing in the obvious way. For $i\le j\le\alpha$ and a condition $p=\kla{s,T}\in\P$, write $p\rest[i,j)$ for $\kla{s\rest[i,j),T\rest[i,j)}$, and let $\P\rest[i,j)=\{p\rest[i,j)\st p\in\P\}$, equipped with the obvious ordering. Then, $\P\isomorphic\P\rest[0,i)\times\P\rest[i,\alpha)$. Of course, $\P\rest[i,j)$ can itself be viewed as a generalized \Prikry{} forcing (by shifting the index set $[i,j)$ to $[0,j-i)$), and so, it makes sense to refer to the imitation iteration of $\P\rest[i,j)$.

The goal is to show that the embedding $\pi_{0,\tilde{\alpha}}:\V\To M_{\tilde{\alpha}}$ is the Boolean ultrapower embedding from $\V$ into its Boolean ultrapower $\check{\V}_U$, where $U$ is the pullback of the $\pi_{0,\tilde{\alpha}}(\P)$-generic filter generated over $M_{\tilde{\alpha}}$ by the critical sequence. This is less obvious in the present case than it was in the case of \Prikry{} forcing or Magidor forcing. We will approach the full result in a few smaller steps.

\begin{defn}
$\P$ is \emph{short} if $\alpha<\kappa_0$. It is \emph{medium} if $\kappa_0\le\alpha<\theta=\sup_{i<\alpha}\kappa_i$. It is \emph{long} if $\alpha=\theta$.
\end{defn}

Of course, $\alpha$ can not be greater than $\theta$.

In the following, let us fix $\P$ and its imitation iteration with models $\seq{M_i}{i\le\tilde{\alpha}}$ and embeddings $\seq{\pi_{i,j}}{i\le j\le\tilde{\alpha}}$. Let $\B$ be the Boolean algebra of $\P$. Let $G_{\vlambda}$ be the filter on $\pi_{0,\tilde{\alpha}}(\B)$ generated by $\vlambda$ over $M_{\tilde{\alpha}}$, and let $U=\pi_{0,\tilde{\alpha}}^{-1}``G_{\vlambda}$ be the pullback of $G_\vlambda$, an ultrafilter on $\B$. Let $j:\V\To_U\check{\V}_U$.

\begin{lem}
\label{lem:IfPisShortThenTheImitationIsABooleanUltrapower}
If $\P$ is short, then $j=\pi_{0,\tilde{\alpha}}$, $M_{\tilde{\alpha}}=\check{\V}_U$, and $M_{\tilde{\alpha}}[\vlambda]=\V^\B/U$.
\end{lem}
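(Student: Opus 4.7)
The plan is to apply Theorem~\ref{thm:HamkinsSeaboldCriterionForBeingBooleanUltrapower} with $j=\pi_{0,\tilde{\alpha}}$, $\bar{\V}=M_{\tilde{\alpha}}$, and $F=G^*_\vlambda$, the ultrafilter in $\pi_{0,\tilde{\alpha}}(\B)$ generated by the $M_{\tilde{\alpha}}$-generic filter $G_\vlambda$ whose existence is Theorem~\ref{thm:CriticalSequenceOfImitationIsGeneric}. First I would observe that the shortness hypothesis $\alpha<\kappa_0$ forces $\tilde{\alpha}=\alpha$: since $\alpha$ lies strictly below $\crit(\pi_{0,i})=\kappa_0$ for every $i\ge 1$, we have $\alpha_i=\pi_{0,i}(\alpha)=\alpha$ for every $i$, so $\tilde{\alpha}=\alpha$. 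The genericity hypothesis of Theorem~\ref{thm:HamkinsSeaboldCriterionForBeingBooleanUltrapower} is then immediate, and the task reduces to showing that every $x\in M_\alpha$ has the form $\pi_{0,\alpha}(f^*)(b_A)$ for some maximal antichain $A\sub\B$ and some $f^*\colon A\To\V$ lying in $\V$.

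Mimicking the antichain constructions in the Prikry and Magidor cases, for each finite $a\sub\alpha$ I would let $A_a$ consist of all conditions $\kla{s,T_s}$ with $\dom(s)=a$, each $s(i)$ a singleton $\{\beta_i\}$ in $[\sup_{j<i}\kappa_j,\kappa_i)$, and $T_s$ the pointwise largest $T$ rendering $\kla{s,T}\in\P$. Disjointness of first coordinates shows $A_a$ is an antichain, and maximality follows by the standard argument: given $\kla{r,S}\in\P$, extend $r$ to $r'$ with $a\sub\dom(r')$ by picking $\gamma_i\in S(i)$ in the interval $[\sup_{j<i}\kappa_j,\kappa_i)$ for each $i\in a\ohne\dom(r)$, shrinking $S$ accordingly to some $S'$, and then $\kla{r',S'}$ refines both $\kla{r,S}$ and the member $\kla{r'\restriction a,T_{r'\restriction a}}$ of $A_a$. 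Because $\alpha<\kappa_0$, each $A_a$ is a set in $\V$.

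For the representation step, a routine induction on the iteration length shows that every $x\in M_\alpha$ can be written as $\pi_{0,\alpha}(f)(\lambda_{i_1},\ldots,\lambda_{i_n})$ for some $f\in\V$ and some $i_1<\cdots<i_n<\alpha$; the key point in the successor case is that each earlier critical point $\lambda_{i_k}$ lies below $\lambda_i=\crit(\pi_{i,i+1})$ and hence passes through $\pi_{i,i+1}$ unchanged. Given such a representation with $a=\{i_1,\ldots,i_n\}$, define $f^*\colon A_a\To\V$ by $f^*(\kla{s,T_s})=f(\beta_{i_1},\ldots,\beta_{i_n})$ where $s(i_k)=\{\beta_{i_k}\}$. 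The unique element $b_{A_a}\in\pi_{0,\alpha}(A_a)\cap G_\vlambda$ has first coordinate $\vlambda\restriction a$, so by elementarity $\pi_{0,\alpha}(f^*)(b_{A_a})=\pi_{0,\alpha}(f)(\lambda_{i_1},\ldots,\lambda_{i_n})=x$. Theorem~\ref{thm:HamkinsSeaboldCriterionForBeingBooleanUltrapower} then yields $j=\pi_{0,\tilde{\alpha}}$, $M_{\tilde{\alpha}}=\check{\V}_U$, and $M_{\tilde{\alpha}}[G^*_\vlambda]=\V^\B/U$, and the last equality becomes $M_{\tilde{\alpha}}[\vlambda]=\V^\B/U$ once one observes that $G^*_\vlambda$ and $\vlambda$ are mutually definable over $M_{\tilde{\alpha}}$. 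The main work lies in identifying the antichains $A_a$ and verifying their maximality in the generalized Prikry order; the hypothesis $\alpha<\kappa_0$ is used precisely to ensure both that $\tilde{\alpha}=\alpha$ and that all relevant finite index sets are in $\V$, and it is exactly these two points that will need to be addressed differently in the medium and long cases treated subsequently.
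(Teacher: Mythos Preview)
Your proposal is correct and follows essentially the same approach as the paper: apply Theorem~\ref{thm:HamkinsSeaboldCriterionForBeingBooleanUltrapower} by exhibiting, for each finite $a\sub\alpha$, the maximal antichain of conditions with stem domain $a$ and trivial side conditions, then translate the standard representation $x=\pi_{0,\alpha}(f)(\lambda_{i_0},\ldots,\lambda_{i_{n-1}})$ into the required form. You supply a few details the paper leaves implicit (the verification that $\tilde{\alpha}=\alpha$, the maximality argument for $A_a$, the inductive representation of elements of $M_\alpha$), but the structure and key ideas are identical.
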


\begin{proof} This is the case in which the argument proceeds very much as in the case of \Prikry{} or Magidor forcing. By Theorem \ref{thm:HamkinsSeaboldCriterionForBeingBooleanUltrapower}, we have to show that every member $a$ of $M_{\tilde{\alpha}}$ can be written as $a=\pi_{0,\tilde{\alpha}}(f)(b_A)$, for some maximal antichain $A\sub\P$, where as before, $b_A$ is the unique element of the $M_{\tilde{\alpha}}$-generic filter that belongs to $\pi_{0,\tilde{\alpha}}(A)$. Clearly, there is a function $f$ such that $a=\pi_{0,\tilde{\alpha}}(f)(\lambda_{i_0},\ldots,\lambda_{i_{n-1}})$, for some $n<\omega$ and $i_0<i_1<\ldots<i_{n-1}<\tilde{\alpha}$. Note that since $\P$ is short, $\tilde{\alpha}=\alpha$, $W_i=\pi_{0,i}(U_i)$ and $\lambda_i=\pi_{0,i}(\kappa_i)$. So we can let $A$ be the maximal antichain in $\P$ consisting of all conditions $\kla{s,T}$ with $\dom(s)=\{i_0,\ldots,i_{n-1}\}$ and $T(i)=\kappa_i$ for all $i\in\alpha\ohne\{i_0,\ldots,i_{n-1}\}$. Define $g:A\To\V$ by setting $g(\kla{s,T})=f(s(i_0),\ldots,s(i_{n-1}))$. We claim that $a=\pi_{0,\tilde{\alpha}}(g)(b_A)$. To see this, let $b_A=\kla{s,T}$. Since $\kla{s,T}\in j(A)$, the domain of $s$ must be $\pi_{0,\tilde{\alpha}}(\{i_0,\ldots,i_{n-1}\})=\{i_0,\ldots,i_{n-1}\}$. And since $\kla{s,T}$ belongs to the filter generated by $\vlambda$, it follows that $s(i_j)=\lambda_{i_j}$, for $j<n$. By definition of $g$, then, $\pi_{0,\tilde{\alpha}}(g)(b_A)=\pi_{0,\tilde{\alpha}}(g)(\kla{s,T})=\pi_{0,\tilde{\alpha}}(f)(\lambda_{i_0},\ldots,\lambda_{i_{n-1}})=a$.
\end{proof}

Let us now work our way towards the case that $\P$ is medium. The following is a technical lemma that will be useful.

\begin{lem}
\label{lem:Crux}
Let $a\in M_{\tilde{\alpha}}$, and suppose there is an $i<\tilde{\alpha}$, an $A\in M_i$ which is a maximal antichain in $\pi_{0,i}(\P)\rest[i,\alpha_i)$ in $M_i$ and a function $f:A\To M_i$, $f\in M_i$, such that $a=\pi_{i,\tilde{\alpha}}(f)(b_A)$. Then there is a maximal antichain $A^*\sub\P$ and a function $f^*:A^*\To\V$ such that $a=\pi_{0,\tilde{\alpha}}(f^*)(b_{A^*})$.
\end{lem}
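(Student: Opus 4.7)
The plan is to unwind the embedding $\pi_{0,i}$ using the standard fact that every element of the iterated ultrapower $M_i$ is representable by a function in $\V$ evaluated at a finite tuple from the critical sequence below $i$, and then to package the resulting data into a single maximal antichain of $\P$ via the product decomposition $\P\isomorphic\P\rest[0,i)\times\P\rest[i,\alpha)$ (or its appropriate analogue in the medium/long case). A ``short'' antichain in the initial factor will pin down the critical points used in the representation, while a pulled-back ``long'' antichain in the tail factor will represent $A$.

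First I would pick $j_0<j_1<\ldots<j_{k-1}<i$ and functions $\bar h,\bar F\in\V$ with $A=\pi_{0,i}(\bar h)(\vlambda)$ and $f=\pi_{0,i}(\bar F)(\vlambda)$, where $\vlambda=(\lambda_{j_0},\ldots,\lambda_{j_{k-1}})$. By \Los\ (applied to the appropriate product of the $U_{j_\ell}$), I may shrink the domain of $\bar h$ and $\bar F$ so that for every $\vgamma$ in it, $\bar h(\vgamma)$ is a maximal antichain in the corresponding tail of $\P$ and $\bar F(\vgamma)$ is a function with domain $\bar h(\vgamma)$. Next I build a short antichain $B\sub\P\rest[0,i)$ whose elements have support exactly $\{j_0,\ldots,j_{k-1}\}$ with the weakest possible constraint function, in direct analogy with the antichains $A_n$ from the proof of Theorem \ref{thm:OmegathIteratedUPIsBooleanUltrapowerOfPrikryForcing} and $A_a$ from Theorem \ref{thm:DirectLimitAsUltrapowerOfMagidorAlgebra}. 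Each $b\in B$ singles out a tuple $\vgamma_b$ of values at these coordinates. I then set
\[A^*=\{b\verl c\st b\in B,\ c\in\bar h(\vgamma_b)\},\qquad f^*(b\verl c)=\bar F(\vgamma_b)(c),\]
where $\verl$ denotes the merging of conditions across the product decomposition of $\P$. Maximality of $A^*$ follows from maximality of $B$ below the cut together with maximality of each $\bar h(\vgamma_b)$ above it.

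The verification that $a=\pi_{0,\tilde\alpha}(f^*)(b_{A^*})$ goes via decomposing $b_{A^*}$ along the product: its short component, by the critical-sequence characterization employed in the proof of Lemma \ref{lem:IfPisShortThenTheImitationIsABooleanUltrapower}, is the unique condition in $\pi_{0,\tilde\alpha}(B)$ whose stem is $\vlambda$, so $\vgamma_{b_{A^*}}=\vlambda$; its long component is the condition in the generic filter $G_\vlambda$ lying in $\pi_{i,\tilde\alpha}(A)$, which is $b_A$ in the convention of the lemma. Applying $\pi_{0,\tilde\alpha}$ to the definition of $f^*$ and using the commutativity $\pi_{0,\tilde\alpha}=\pi_{i,\tilde\alpha}\compose\pi_{0,i}$ then yields
\[\pi_{0,\tilde\alpha}(f^*)(b_{A^*})=\pi_{i,\tilde\alpha}\bigl(\pi_{0,i}(\bar F)(\vlambda)\bigr)(b_A)=\pi_{i,\tilde\alpha}(f)(b_A)=a.\]

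The main obstacle I anticipate is the bookkeeping in the medium and long cases, where $\pi_{0,i}$ can act nontrivially on coordinates in $[0,\alpha)$: identifying $\pi_{0,i}(\P)\rest[i,\alpha_i)$ with the image under $\pi_{0,i}$ of an appropriate tail $\P\rest[\ell,\alpha)$ of the ground-model forcing requires locating the $\V$-side cut $\ell$ with $\pi_{0,i}(\ell)=i$, and verifying that $j_{k-1}<\ell$ so that the short antichain $B$ really lives below the cut. Both points should follow from $j_{k-1}<i$ together with the monotonicity of the imitation iteration, but this is where I would invest the most attention.
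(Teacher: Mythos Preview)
Your direct one-step construction is essentially correct when $\P$ is short: there $\alpha<\kappa_0=\crit(\pi_{0,i})$ forces $\ell=i$, and every iteration index $j_m<i<\alpha$ is automatically a valid $\V$-side coordinate of $\P$. But the lemma is stated and applied precisely for the medium and long cases (Lemmas~\ref{lem:IfPisMediumThenTheImitationIsABooleanUltrapower} and~\ref{lem:IfPisLongThenTheImitationIsABooleanUltrapower}), and there your approach breaks at exactly the point you flag---and monotonicity does not save it.

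Concretely, suppose $\alpha\ge\kappa_0$ and take $i=\kappa_0$ (which is a valid stage since $\kappa_0<\talpha$). Since $\crit(\pi_{0,\kappa_0})=\kappa_0$, the range of $\pi_{0,\kappa_0}$ on ordinals omits the entire interval $[\kappa_0,\pi_{0,\kappa_0}(\kappa_0))$; hence there is \emph{no} $\ell$ with $\pi_{0,\kappa_0}(\ell)=\kappa_0$. Even when such an $\ell$ does exist, the inequality $j_{k-1}<i=\pi_{0,i}(\ell)$ does not yield $j_{k-1}<\ell$, because $\pi_{0,i}$ is far from surjective onto $i$. The deeper issue is that the $j_m$ are iteration indices: stage $\gamma$ applies the measure $W_\gamma=\pi_{0,\gamma}(\vU)_\gamma$, and $\gamma$ is a coordinate of $\pi_{0,\gamma}(\P)$, not of $\P$. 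In the medium and long regimes the $j_m$ can exceed $\alpha$, so the antichain $B$ with support $\{j_0,\ldots,j_{k-1}\}$ that you describe cannot be formed in $\P$ at all.

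The paper avoids this by an inductive descent rather than a one-shot pullback: take the least $j$ admitting such a representation in $M_j$ and argue $j=0$. In the successor step $j=\gamma+1$, one represents $\tA,\tf\in M_{\gamma+1}$ by $\lambda_\gamma$-indexed sequences in $M_\gamma$ and adjoins the single coordinate $\gamma$ to build $A^*\sub\pi_{0,\gamma}(\P)\rest[\gamma,\alpha_\gamma)$; this is legitimate because $\gamma<\alpha_\gamma$, so $\gamma$ \emph{is} a valid coordinate in $M_\gamma$'s copy of $\P$. The limit step is a straightforward pullback along some $\pi_{\gamma,j}$. The step-by-step descent is exactly what guarantees that each newly adjoined coordinate lives in the current model's version of the forcing---precisely the guarantee your direct construction lacks once $\P$ is not short.
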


\noindent\emph{Note:} The coordinate $i$ can be read off of $A$, since $i$ is the minimum of the union of the domains of $s$ and $T$, for any $\kla{s,T}\in A$. The notation $b_A$ then makes sense: it is the unique member of $\pi_{i,\tilde{\alpha}}(A)$ that belongs to the filter on $\pi_{i,\tilde{\alpha}}(\P)\rest[i,\tilde{\alpha})$ generated by $\seq{\lambda_j}{i\le j<\tilde{\alpha}}$, which is generic for that partial order over $M_{\tilde{\alpha}}$. Note that the critical point of $\pi_{i,\tilde{\alpha}}$ is $\lambda_i=\pi_{0,i}(\vkappa)_i>\sup_{j<i}\pi_{0,i}(\vkappa)_j\ge i$, so $\pi_{i,\tilde{\alpha}}(i)=i$.

\begin{proof}
Let $j$ be least such that there is an $\tA\in M_j$, such that $\tA$ is a maximal antichain in $\pi_{0,j}(\P)\rest[j,\alpha_j)$, and an $\tf:\tA\To M_j$, $\tf\in M_j$, such that $a=\pi_{j,\tilde{\alpha}}(\tf)(b_{\tA})$. Fix such $\tA$ and $\tf$. We claim that $j=0$. Clearly, there is such a $j$, and $j\le i$, as witnessed by $A$ and $f$. If $j$ is not $0$, then $j$ is either a successor or a limit ordinal.

\noindent\emph{Case 1:} $j$ is a successor ordinal, say $j=\gamma+1$.

Let $\vA=\seq{A_\xi}{\xi<\lambda_\gamma}$, $\vec{f}=\seq{f_\xi}{\xi<\lambda_\gamma}\in M_\gamma$ so that $[\vA]_{W_\gamma}=\tilde{A}$ and $[\vf]_{W_\gamma}=\tilde{f}$. Recall that $\crit(\pi_{\gamma,\gamma+1})=\lambda_\gamma$ is greater than $\gamma$, hence it is greater than $\gamma+1$ as well, so we may assume that for all $\xi<\lambda_\gamma$, $A_\xi$ is a maximal antichain in $\pi_{0,\gamma}(\P)\rest[\gamma+1,\alpha_\gamma)$ and $f_\xi:A_\xi\To M_\gamma$. Set
\[A^*=\{\kla{s\cup\{\kla{\gamma,\xi}\},T}\st\xi<\lambda_\gamma\ \text{and}\ \kla{s,T}\in A_\xi\}\]
and define $f^*:A^*\To M_{\gamma}$ by setting
\[f^*(\kla{s,T})=f_{s(\gamma)}(\kla{s,T}\rest[\gamma+1,\alpha_\gamma))\]
Then $A^*$ is a maximal antichain in $\pi_{0,\gamma}(\P)\rest[\gamma,\alpha_\gamma)$: it is obvious that it is an antichain. To see that it is maximal, suppose $\kla{s,T}\in\pi_{0,\gamma}(\P)\rest[\gamma,\alpha_\gamma)$ were incompatible with all members of $A^*$. W.l.o.g., we may assume that $\gamma\in\dom(s)$. Let $s(\gamma)=\xi$. Then $\kla{s,T}\rest[\gamma+1,\alpha_\gamma)$ would be incompatible (with respect to $\pi_{0,\gamma}(\P)\rest[\gamma+1,\alpha_\gamma)$) with all members of $A_\xi$, which contradicts the maximality of $A_\xi$.

We claim that $\pi_{\gamma,\tilde{\alpha}}(f^*)(b_{A^*})=a$. To see this, let $b_{A^*}=\kla{s,T}$. Since every member $\kla{t,H}\in A^*$ has $\gamma$ in its domain, the same is true for $\pi_{\gamma,\tilde{\alpha}}(A^*)$. Since $b_{A^*}$ belongs to the filter generated by $\seq{\lambda_\zeta}{\gamma\le\zeta<\tilde{\alpha}}$, it must be the case that $s(\gamma)=\lambda_\gamma$. But this means that $\kla{s,T}\rest[\gamma+1,\pi_{0,\tilde{\alpha}}(\alpha))\in\pi_{\gamma,\tilde{\alpha}}(\vA)_{\lambda_\gamma}$, because of the way $A^*$ was constructed from the $A_\xi$ ($\xi<\lambda_\gamma$). But $\pi_{\gamma,\tilde{\alpha}}(\vA)_{\lambda_\gamma}=\pi_{\gamma+1,\tilde{\alpha}}(\pi_{\gamma,\gamma+1}(\vA)(\lambda_\gamma))=\pi_{\gamma+1,\tilde{\alpha}}([\vA]_{W_\gamma})=\pi_{\gamma+1,\tilde{\alpha}}(\tilde{A})$.
This means that $b_{A^*}\rest[\gamma+1,\tilde{\alpha})=b_{\tA}$. By the way $f^*$ was defined, we get:
\begin{ea*}
\pi_{\gamma,\tilde{\alpha}}(f^*)(b_{A^*})&=&(\pi_{\gamma,\talpha}(\vf)_{\lambda_\gamma})(b_{A^*}\rest[\gamma+1,\talpha))\\
&=&\pi_{\gamma+1,\talpha}([\vf]_{W_\gamma})(b_{\tA})\\
&=&\pi_{\gamma+1,\talpha}(\tf)(b_{\tA})\\
&=&a.
\end{ea*}
This contradicts the minimality of $j=\gamma+1$.

\noindent\emph{Case 2:} $j$ is a limit ordinal.

Since $M_j$ is the direct limit of the earlier structures, there is a $\gamma<j$ such that $\tf$ and $\tA$ are in the range of $\pi_{\gamma,j}$. Let $\barf,\bA$ be the preimages. Since $j$ is definable from $\tA$, it follows that $j$ is in the range of $\pi_{\gamma,j}$ as well. Let $\bj=\pi_{\gamma,j}^{-1}(j)$. It follows that $\bj\ge\gamma$, because if $\bj<\gamma$, then since $\gamma<\lambda_\gamma=\crit(\pi_{\gamma,j})$, it would follow that $j=\pi_{\gamma,j}(\bj)=\bj<\gamma<j$. So we have that $\bA\sub\pi_{0,\gamma}(\P)\rest[\bj,\alpha_\gamma)$ is a maximal antichain, and $\barf:\bA\To M_\gamma$. We can now easily expand $\bA$ to a maximal antichain $A^*$ in $\pi_{0,\gamma}(\P)\rest[\gamma,\alpha_\gamma)$ by setting
\begin{ea*}
A^*=\{\kla{s,T}\in\pi_{0,\gamma}(\P)\rest[\gamma,\alpha_\gamma)&|&\kla{s,T}\rest[\bj,\alpha_\gamma)\in\bA\ \text{and for all}\ \xi\in[\gamma,\bj),\\ &&T(\xi)=\pi_{0,\gamma}(\vkappa)_\xi\}
\end{ea*}
Similarly, $\barf$ can be expanded to $f^*$, so as to act on $A^*$, so that $f^*:A^*\To M_\gamma$, by setting
\[f^*(p)=\barf(p\rest[\bj,\alpha_\gamma))\]
It is straightforward to check that $A^*$ is a maximal antichain in $\pi_{0,\gamma}(\P)\rest[\gamma,\alpha_\gamma)$: it is clearly an antichain, and it is maximal because if $\kla{s,T}$ were incompatible with every member of $A^*$, then since the conditions in $A^*$ are trivial below $\bj$, it would follow that $\kla{s,T}\rest[\bj,\alpha_\gamma)$ is incompatible with every member of $\bA$, which would contradict the maximality of $\bA$.

Of course, our next claim is that $\pi_{\gamma,\talpha}(f^*)(b_{A^*})=a$. To see this, let $\kla{s,T}=b_{\tA}$. Let $\kla{s,T'}\in\pi_{0,\talpha}(\P)\rest[\gamma,\talpha)$ be defined by letting $T'\rest[j,\talpha)=T$, and for $\xi\in[\gamma,j)$, $T'(\xi)=\pi_{0,j}(\vkappa)_\xi$. Then $\kla{s,T'}\in\pi_{\gamma,\talpha}(\bA)$, and $\kla{s,T'}$ belongs to the generic filter generated by $\seq{\lambda_\xi}{\gamma\le\xi<\talpha}$. So $\kla{s,T'}=b_{A^*}$. Hence, $b_{A^*}\rest[j,\talpha)=b_{\tA}$, and so,
\[\pi_{\gamma,\talpha}(f^*)(b_{A^*})=\pi_{\gamma,\talpha}(\barf)(b_{\tA})=\pi_{j,\talpha}(\tf)(b_{\tA})=a\]
as desired.

Again, we have reached a contradiction to the minimality of $j$. So the only possibility is that $j=0$, and this proves the lemma. \end{proof}

\begin{lem}
\label{lem:IfPisMediumThenTheImitationIsABooleanUltrapower}
If $\P$ is medium, then $j=\pi_{0,\tilde{\alpha}}$, $M_{\tilde{\alpha}}=\check{\V}_U$, and $M_{\tilde{\alpha}}[\vlambda]=\V^\B/U$.
\end{lem}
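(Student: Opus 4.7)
The plan is to invoke Theorem \ref{thm:HamkinsSeaboldCriterionForBeingBooleanUltrapower} exactly as in the short case, which reduces the lemma to showing that every $a \in M_{\tilde{\alpha}}$ can be written in the form $\pi_{0,\tilde{\alpha}}(f^*)(b_{A^*})$ for some maximal antichain $A^* \sub \B$ and some $f^*: A^* \to \V$. My strategy is then to appeal to Lemma \ref{lem:Crux}: it suffices to find, for each $a$, some intermediate index $i < \tilde{\alpha}$, a maximal antichain $A \in M_i$ of $\pi_{0,i}(\P)\rest[i,\alpha_i)$, and an $f: A \to M_i$ lying in $M_i$ with $a = \pi_{i,\tilde{\alpha}}(f)(b_A)$. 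The novelty compared with the short case is that $\alpha$ may be moved by the iteration, so $a$ need not be witnessed by ground-model data; Lemma \ref{lem:Crux} is what allows us to work at a suitable intermediate stage $i$ instead.

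The argument will split according to whether $\tilde{\alpha}$ is a limit or a successor. In the limit case, I use that $M_{\tilde{\alpha}}$ is the direct limit of $\seq{M_i}{i<\tilde{\alpha}}$, choose $i < \tilde{\alpha}$ and $a_i \in M_i$ with $\pi_{i,\tilde{\alpha}}(a_i) = a$, take $A = \{\eins\}$ consisting of the top condition of $\pi_{0,i}(\P)\rest[i,\alpha_i)$ (trivially a maximal antichain, with $b_A = \eins$), and set $f(\eins) = a_i$; then $\pi_{i,\tilde{\alpha}}(f)(b_A) = a$ is immediate.

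In the successor case $\tilde{\alpha} = \beta + 1$, I write $a = \pi_{\beta,\beta+1}(g)(\lambda_\beta)$ for some $g: \lambda_\beta \to M_\beta$ in $M_\beta$. Working inside $M_\beta$, for each $\xi < \lambda_\beta$ I define $p_\xi \in \pi_{0,\beta}(\P)\rest[\beta,\alpha_\beta)$ to be the condition $\kla{s_\xi, T_\xi}$ with $s_\xi = \{\kla{\beta, \{\xi\}}\}$, $T_\xi(\beta) = \lambda_\beta \ohne (\xi+1)$, and $T_\xi(\eta) = \pi_{0,\beta}(\vkappa)_\eta$ for $\eta \in (\beta, \alpha_\beta)$. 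Letting $A = \{p_\xi \st \xi < \lambda_\beta\}$, a routine density argument (any condition can be refined so that $\beta$ occurs in the domain of its first coordinate, and the refinement is then compatible with the corresponding $p_\xi$) shows that $A$ is a maximal antichain of $\pi_{0,\beta}(\P)\rest[\beta, \alpha_\beta)$ in $M_\beta$. Define $f: A \to M_\beta$ by $f(p_\xi) = g(\xi)$. A standard \Los-style computation identifies $b_A$ as the unique element of $\pi_{\beta,\beta+1}(A)$ whose first coordinate assigns $\{\lambda_\beta\}$ to $\beta$, namely $b_A = \pi_{\beta,\beta+1}(\seq{p_\xi}{\xi<\lambda_\beta})(\lambda_\beta)$, so that
\[
\pi_{\beta,\beta+1}(f)(b_A) = \pi_{\beta,\beta+1}(g)(\lambda_\beta) = a.
\]

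With the hypothesis of Lemma \ref{lem:Crux} verified in both cases, the lemma supplies the required $A^*$ and $f^*$, and Theorem \ref{thm:HamkinsSeaboldCriterionForBeingBooleanUltrapower} then finishes the proof. The heavy lifting is absorbed into Lemma \ref{lem:Crux}; the only genuine obstacle here is handling the successor case of $\tilde{\alpha}$ cleanly, which the single-coordinate antichain $A$ above resolves. I do suspect that in fact $\tilde{\alpha}$ is always a limit ordinal (since $\pi_{\beta,\beta+1}(\alpha_\beta) = \beta+1$ seems to force $\alpha_\beta < \lambda_\beta$, which is incompatible with $\beta \ge \lambda_\beta$), but the case split above sidesteps having to verify this.
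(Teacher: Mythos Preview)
Your proof is correct and takes a genuinely different route from the paper's. The paper exploits the medium hypothesis directly: since $\kappa_0\le\alpha<\sup_i\kappa_i$, there is some $\gamma<\alpha$ with $\alpha<\kappa_\gamma$, and pushing this through the iteration yields an index $i<\tilde{\alpha}$ at which $\alpha_i<\lambda_i$, so that the tail forcing $\pi_{0,i}(\P)\rest[i,\alpha_i)$ is \emph{short}. The paper then applies the already-proven short case (Lemma~\ref{lem:IfPisShortThenTheImitationIsABooleanUltrapower}) to that tail to obtain the hypothesis of Lemma~\ref{lem:Crux} at stage $i$. Your argument, by contrast, never invokes the short case and never uses the medium hypothesis: you split on whether $\tilde{\alpha}$ is a limit or a successor and verify the hypothesis of Lemma~\ref{lem:Crux} directly in each case. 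Your limit case is verbatim the paper's proof of the \emph{long} case (Lemma~\ref{lem:IfPisLongThenTheImitationIsABooleanUltrapower}), and your successor case is a clean one-coordinate construction. So you have in effect given a uniform argument covering all three lemmas at once, which is a nice economy; the paper's approach has the complementary virtue of making the reduction ``medium $\to$ short tail'' explicit.

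Two small points. First, in your successor construction the index $\xi$ should range over $[\sup_{j<\beta}\pi_{0,\beta}(\vkappa)_j,\lambda_\beta)$ rather than all of $\lambda_\beta$, since the forcing requires $s(\beta)\sub\lambda_\beta\setminus\sup_{j<\beta}\pi_{0,\beta}(\vkappa)_j$; this is cosmetic and does not affect the computation of $b_A$, as $\lambda_\beta$ itself lies in the correct interval. Second, your parenthetical suspicion that $\tilde{\alpha}$ is always a limit contains a slip: you write ``incompatible with $\beta\ge\lambda_\beta$'', but in fact $\beta<\lambda_\beta$ always holds, so this particular argument does not go through. (Your case split makes the question moot anyway.)
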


\begin{proof} As in the proof of Lemma \ref{lem:IfPisShortThenTheImitationIsABooleanUltrapower}, we will use Theorem \ref{thm:HamkinsSeaboldCriterionForBeingBooleanUltrapower}. So we have to show that every member $a$ of $M_{\tilde{\alpha}}$ can be written as $a=\pi_{0,\tilde{\alpha}}(f)(b_A)$, for some maximal antichain $A\sub\P$. So let $a\in M_\talpha$ be given.

Since $\P$ is medium, there is a $\gamma<\alpha$ such that $\alpha<\kappa_\gamma$. Note that $\gamma>0$, as $\P$ is medium, not short. So by elementarity, there is a $\gamma<\talpha=\pi_{0,\talpha}(\alpha)$ such that $\talpha<\pi_{0,\talpha}(\vkappa)_\gamma$. Let $i$ be the least such $\gamma$. Then $\pi_{0,i}(\alpha)<\pi_{0,i}(\vkappa)_i$. This is because if $\pi_{0,i}(\alpha)\ge\pi_{0,i}(\vkappa)_i$, then, applying $\pi_{i,\talpha}$, we would get
\[\talpha=\pi_{0,\talpha}(\alpha)\ge\pi_{i,\talpha}(\pi_{0,i}(\vkappa)_i)=\pi_{0,\talpha}(\vkappa)_{\pi_{i,\talpha}(i)}=\pi_{0,\talpha}(\vkappa)_i\]
because $i<\crit(\pi_{i,\talpha})$. So $\talpha\ge\pi_{0,\talpha}(\vkappa)_i$, contradicting the choice of $i$.

This means that the forcing $\pi_{0,i}(\P)[i,\alpha_i)$ is short (slightly abusing notation), where we again write $\alpha_i$ for $\pi_{0,i}(\alpha)$. But notice that the tail of the iteration after $i$, $\seq{M_\gamma}{i\le\gamma\le\tilde{\alpha}}$ is the imitation of $\pi_{0,i}(\P)\rest[i,\alpha_i)$. So we can apply Lemma \ref{lem:IfPisShortThenTheImitationIsABooleanUltrapower}, and we get that $a=\pi_{i,\talpha}(f)(b_A)$, for a maximal antichain $A$ in $\pi_{0,i}(\P)\rest[i,\alpha_i)$ and a function $f:A\To M_i$, where $f,A\in M_i$. But then Lemma \ref{lem:Crux} kicks in, telling us that there is a maximal antichain $A^*$ in $\P$ and a function $f^*:A^*\To\V$ such that $a=\pi_{0,\talpha}(f^*)(b_{A^*})$. Since $a$ was an arbitrary member of $M_{\talpha}$, this proves the lemma. \end{proof}

\begin{lem}
\label{lem:IfPisLongThenTheImitationIsABooleanUltrapower}
If $\P$ is long, then $j=\pi_{0,\tilde{\alpha}}$, $M_{\tilde{\alpha}}=\check{\V}_U$, and $M_{\tilde{\alpha}}[\vlambda]=\V^\B/U$.
\end{lem}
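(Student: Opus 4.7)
The plan is to invoke Theorem \ref{thm:HamkinsSeaboldCriterionForBeingBooleanUltrapower}, as in the short and medium cases, by reducing to Lemma \ref{lem:Crux} via a trivial antichain in the tail forcing. The crucial new ingredient is that, when $\P$ is long, $\alpha=\theta=\sup_{i<\alpha}\kappa_i$ is a limit ordinal; by elementarity every $\pi_{0,\beta}(\alpha)$ is a limit ordinal, so any fixed point of $\beta\mapsto\pi_{0,\beta}(\alpha)$ must itself be a limit. Hence $\tilde{\alpha}$ is a limit ordinal, and $M_{\tilde{\alpha}}$ is the direct limit of $\kla{M_i\st i<\tilde{\alpha}}$.

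Given $a\in M_{\tilde{\alpha}}$, I would first pick $i<\tilde{\alpha}$ and $\bar{a}\in M_i$ with $a=\pi_{i,\tilde{\alpha}}(\bar{a})$. Since $i<\tilde{\alpha}$, the ultrafilter $W_i$ is defined, which forces $i<\alpha_i=\pi_{0,i}(\alpha)$; thus the tail forcing $\pi_{0,i}(\P)\rest[i,\alpha_i)$ is a nontrivial generalized \Prikry{} forcing in $M_i$, with a weakest condition $\mathbf{1}=\kla{\leer,T}$, where $T$ has domain $[i,\alpha_i)$ and $T(\xi)=\pi_{0,i}(\vkappa)_\xi$. Setting $A\mdf\{\mathbf{1}\}$ and defining $f\colon A\To M_i$ by $f(\mathbf{1})=\bar{a}$ yields a trivial maximal antichain in $\pi_{0,i}(\P)\rest[i,\alpha_i)$ and a function on it, both in $M_i$. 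Elementarity gives that $\pi_{i,\tilde{\alpha}}(\mathbf{1})$ is the weakest condition of $\pi_{0,\tilde{\alpha}}(\P)\rest[i,\tilde{\alpha})$, so $b_A=\pi_{i,\tilde{\alpha}}(\mathbf{1})$ (it lies in every generic filter, and in particular in the one generated by $\kla{\lambda_j\st i\le j<\tilde{\alpha}}$), and $\pi_{i,\tilde{\alpha}}(f)(b_A)=\pi_{i,\tilde{\alpha}}(\bar{a})=a$.

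Lemma \ref{lem:Crux} then directly supplies a maximal antichain $A^*\sub\P$ and a function $f^*\colon A^*\To\V$ with $a=\pi_{0,\tilde{\alpha}}(f^*)(b_{A^*})$. Since $a$ was arbitrary, Theorem \ref{thm:HamkinsSeaboldCriterionForBeingBooleanUltrapower} applies and delivers the three conclusions $j=\pi_{0,\tilde{\alpha}}$, $M_{\tilde{\alpha}}=\check{\V}_U$, and $M_{\tilde{\alpha}}[\vlambda]=\V^\B/U$. The main conceptual obstacle relative to the earlier cases is that the tail iteration after any stage $i<\tilde{\alpha}$ is never short here, so Lemma \ref{lem:IfPisShortThenTheImitationIsABooleanUltrapower} cannot be invoked on the tail as in the medium case; the fix is to exploit the direct-limit representation of $a$ to push all nontrivial dependence below stage $i$, leaving only a trivial antichain to be handled by Lemma \ref{lem:Crux}.
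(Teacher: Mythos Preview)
Your proof is correct and follows essentially the same approach as the paper: exploit that $\tilde{\alpha}$ is a limit ordinal to write $a=\pi_{i,\tilde{\alpha}}(\bar{a})$, set $A=\{\eins_{\P_i}\}$ with $f(\eins_{\P_i})=\bar{a}$, and then invoke Lemma~\ref{lem:Crux} to pull back to a maximal antichain in $\P$ so that Theorem~\ref{thm:HamkinsSeaboldCriterionForBeingBooleanUltrapower} applies. You supply a bit more detail (why $\tilde{\alpha}$ is a limit, why $i<\alpha_i$, and why $b_A=\pi_{i,\tilde{\alpha}}(\eins)$), but the argument is the same.
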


\begin{proof}
Let $a\in M_{\talpha}$. According to Theorem \ref{thm:HamkinsSeaboldCriterionForBeingBooleanUltrapower} again, we have to find $f^*,A^*$ such that $A^*$ is a maximal antichain in $\P$, $f^*:A^*\To\V$, and $a=\pi_{0,\talpha}(f^*)(b_{A^*})$. Since $\P$ is long, $\talpha$ is a limit ordinal, and so, there is an $i<\talpha$ and an $\ba\in M_i$ such that $\pi_{i,\talpha}(\ba)=a$. Let $\P_i=\pi_{0,i}(\P)\rest[i,\alpha_i)$, let $A=\{\eins_{\P_i}\}$, and let $f:A\To M_i$ be defined by $f(\eins_{\P_i})=\ba$. Clearly then, $\pi_{i,\talpha}(f)(b_A)=\pi_{i,\talpha}(\ba)=a$. But then, Lemma \ref{lem:Crux} implies that there are $f^*$ and $A^*$ with the desired properties.
\end{proof}

Note that the proof of the previous lemma only made use of the fact that $\alpha$ is a limit ordinal under its assumption.

Now, Lemmas \ref{lem:IfPisShortThenTheImitationIsABooleanUltrapower}, \ref{lem:IfPisMediumThenTheImitationIsABooleanUltrapower} and \ref{lem:IfPisLongThenTheImitationIsABooleanUltrapower} yield:

\begin{thm}
If $\P=\P_{\vU,\veta}$ is a generalized \Prikry{} forcing of length $\alpha$, with $\eta_i=1$ for all $i<\alpha$, $\B$ is the Boolean algebra of $\P$ and if $\vM$ is the imitation iteration of $\P$ with embeddings $\vpi$, and if $U$ is the pullback of the filter on $\pi_{0,\talpha}(\B)$ generated by the critical sequence, and $j:\V\To\check{V}_U$ is the Boolean ultrapower, then
\begin{itemize}
\item $j=\pi_{0,\tilde{\alpha}}$,
\item $M_{\tilde{\alpha}}=\check{\V}_U$, and
\item $M_{\tilde{\alpha}}[\vlambda]=\V^\B/U$.
\end{itemize}
\end{thm}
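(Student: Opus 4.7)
The plan is simply to invoke the three preceding lemmas according to the trichotomy introduced just before Lemma \ref{lem:IfPisShortThenTheImitationIsABooleanUltrapower}. First I would observe that since $\vkappa = \seq{\kappa_i}{i < \alpha}$ is the monotone enumeration (of order type $\alpha$) of the discrete set $D$ of measurable cardinals below $\theta = \sup_{i<\alpha}\kappa_i$, we necessarily have $\alpha \le \theta$, as was already remarked after the definition. Hence the three possibilities short ($\alpha < \kappa_0$), medium ($\kappa_0 \le \alpha < \theta$), and long ($\alpha = \theta$) are mutually exclusive and jointly exhaustive.

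In each of these three cases, the three identifications $j = \pi_{0,\tilde{\alpha}}$, $M_{\tilde{\alpha}} = \check{\V}_U$, and $M_{\tilde{\alpha}}[\vlambda] = \V^\B/U$ asserted by the theorem are literally the conclusion of the matching lemma, namely Lemma \ref{lem:IfPisShortThenTheImitationIsABooleanUltrapower}, Lemma \ref{lem:IfPisMediumThenTheImitationIsABooleanUltrapower}, or Lemma \ref{lem:IfPisLongThenTheImitationIsABooleanUltrapower}, respectively. So the entire proof is a one-line case split followed by an application of the appropriate lemma.

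There is no real obstacle left to overcome. All the substantive work has already been carried out: the short case is handled directly by exhibiting, for a given $a = \pi_{0,\tilde{\alpha}}(f)(\lambda_{i_0},\dots,\lambda_{i_{n-1}}) \in M_{\tilde{\alpha}}$, a maximal antichain $A$ of conditions with first-coordinate domain $\{i_0,\dots,i_{n-1}\}$ and verifying the Hamkins--Seabold criterion (Theorem \ref{thm:HamkinsSeaboldCriterionForBeingBooleanUltrapower}); the medium and long cases are reduced to the short case via the key inductive device Lemma \ref{lem:Crux}, which propagates a representation $a = \pi_{i,\tilde{\alpha}}(f)(b_A)$ from some intermediate stage $i$ of the imitation iteration back to stage $0$, by a minimality argument that treats successor and limit $i$ separately. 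Thus the only thing left to write for the theorem itself is the observation that the trichotomy covers all possibilities.
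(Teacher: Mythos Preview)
Your proposal is correct and matches the paper's own argument exactly: the theorem is stated immediately after the three lemmas with the remark that they jointly yield it, and your case split on short/medium/long together with the observation that $\alpha\le\theta$ makes the trichotomy exhaustive is precisely what is intended.
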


The result should hold without the restriction to the case where $\eta_i=1$ for all $i<\alpha$. We made this assumption merely to keep the notation simple.

\section{The \Bukovsky-Dehornoy Phenomenon, simple antichains and skeletons}
\label{section:Bukovsky-Dehornoy}

It is by now a well-known fact that when one iterates a normal measure $\omega$ many times, and then intersects the finite iterates, the resulting model, let's call it the \emph{intersection model}, is the generic extension of the direct limit by the \Prikry{}-generic filter generated by the critical sequence. The history of this fact is as follows: \Bukovsky{} showed that the intersection model is a generic extension of the direct limit model in \cite{Bukovsky1973:ChangingCofinalityAlternativeProof}, but it was left open what forcing led from the direct limit model to the intersection model. Dehornoy then showed in \cite{Dehornoy:IteratedUltrapowersAndPrikryForcing} that the forcing is \Prikry{} forcing (and he showed much more general results in that paper). In \cite{Bukovsky1977:IteratedUltrapowersAndPrikryForcing}, \Bukovsky{} showed that the intersection model is the same as the Boolean model $\V^\B/U$ and that the direct limit model is the same as $\check{\V}_U$.

In this section, we investigate whether this phenomenon can be extended to the Boolean ultrapower setting, or whether Boolean ultrapowers can be used to explain it. The attractive scenario is that the models $M_A$ correspond to the finite iterates in the case of \Prikry{} forcing, the direct limit model corresponds to $\check{\V}_U$,  the intersection model corresponds to the intersection of the models $M_A$, and the generic extension of the direct limit model corresponds to $\V^\B/U$. In short, the phenomenon we are interested in is
\[\bigcap_A M_A=\V^\B/U\]
Since in general, $\V^\B/U=\check{V}_U[G]$, this equation says it all, and if it holds, then we say that the \Bukovsky-Dehornoy phenomenon arises with $U$.

Recall that if $A\sub\B$ is a maximal antichain and we form $M_A$, the ultrapower of $\V$ by the ultrafilter $U_A$ on $A$, then we denote the canonical embedding from $\V$ into $M_A$ by $\pi_{0,A}$, and the direct limit embedding is denoted $\pi_{A,\infty}:M_A\To\check{\V}_U$.

\subsection{A condition implying the \Bukovsky-Dehornoy phenomenon}

Before continuing our analysis of situations when the \Bukovsky-Dehornoy phenomenon arises, let us note that it is not universal. The following was observed jointly by the first author and Joseph van Name.

\begin{obs}
If $\kappa$ is a measurable cardinal such that $2^\kappa=\kappa^+$, then there is an ultrafilter $U$ on the complete Boolean algebra $\B$ of $\Add(\kappa)$ such that the \Bukovsky-Dehornoy phenomenon fails for $U$.
\end{obs}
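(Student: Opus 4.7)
The natural strategy is to exploit the measurability of $\kappa$ together with $2^\kappa = \kappa^+$ to construct $U$ as the pullback of an $M$-generic-like filter along a normal ultrapower embedding. Let $\mu$ be a normal measure on $\kappa$ and $j:\V\to M=\Ult(\V,\mu)$ the associated ultrapower. Then $M$ is $\kappa$-closed in $\V$ and $j(\B)$, the complete Boolean algebra of $\Add(j(\kappa))$ as computed in $M$, is $<j(\kappa)$-closed in $M$. Under $2^\kappa = \kappa^+$, the $\V$-cardinality of the collection of $M$-dense subsets of $j(\B)$ one needs to meet is bounded by $\kappa^+$, so one may build in $\V$, by a length-$\kappa^+$ diagonalisation using this closure, an $M$-generic filter $H\subseteq j(\B)$. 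Setting $U:=j^{-1}[H]$ and noting that the $\kappa$-completeness of $j$ together with the ultrafilter property of $H$ yields a $\kappa$-complete ultrafilter on $\B$ in $\V$, Theorem~\ref{thm:HamkinsSeaboldCriterionForBeingBooleanUltrapower} factors $j$ as $j=k\circ j_U$ with $j_U$ the Boolean ultrapower map and $k:\check{\V}_U\to M$ elementary; the extension of $k$ to $\V^\B/U=\check{\V}_U[G]$ sends $G$ to $H$.

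To see that the \Bukovsky-Dehornoy phenomenon fails for this $U$, I would exhibit a maximal antichain $A\subseteq\B$ with $G=[\dot G]_U\notin M_A$. A natural choice is a large maximal antichain of size $\kappa^+$ that, bit by bit, fixes $\kappa$-many independent binary coordinates of the generic subset of $\kappa$ added by $\Add(\kappa)$. For such an $A$, $U_A$ is a nonprincipal $\kappa$-complete ultrafilter on a set of size $\kappa^+$, the ultrapower $M_A=\Ult(\V,U_A)$ is a proper inner model of $\V$, and its elements are represented as equivalence classes modulo $U_A$ of functions $A\to\V$. The key claim is that no such function represents $G$: the pointwise coordinate data carried by $G$ is precisely the sort of information that $U_A$-equivalence collapses, since $H$ is a genuinely new object over $\V$ rather than something pre-tabulated by a function in $\V$.

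The main technical obstacle is making this last claim rigorous. One plausible route is to argue that if $G=[f]_{U_A}$ for some $f\in\V$, then $f$ itself, being in $\V$, would already determine the relevant bits of $H$, contradicting the genericity of $H$ over $\V$ in the sense that $H$ decides each such bit only through its interaction with the image $j(A)$, not through any $\V$-pre-determined choice. Once such a witness $A$ is produced, $G\in\V^\B/U$ but $G\notin M_A$, so $G\in\V^\B/U\setminus\bigcap_A M_A$, and the \Bukovsky-Dehornoy phenomenon fails for $U$.
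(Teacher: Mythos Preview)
Your construction of $U$ is on the right track and essentially reconstructs the content of the cited Corollary 81, but you misread Theorem~\ref{thm:HamkinsSeaboldCriterionForBeingBooleanUltrapower}: that theorem does not produce a factoring $j=k\circ j_U$; it says that \emph{if} every element of $\bar{\V}$ has the form $j(f)(b_A)$, \emph{then} $j$ itself is the Boolean ultrapower map (no $k$). You would need to verify that hypothesis for $j=j_\mu$, which takes a bit of work with $\Add(\kappa)$. Without knowing $j_U=j_\mu$ and $\check{\V}_U=M$, you do not control what the models $M_A$ look like.

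The genuine gap is in the second half. Your proposed reason that no $f:A\to\V$ can represent $G$ appeals to the ``genericity of $H$ over $\V$'', but $H$ is not generic over $\V$: you built $H$ in $\V$, so $H\in\V$. There is nothing to contradict. The intuition that $H$ is ``not pre-tabulated by a function in $\V$'' is simply false here; $H$ is literally such a pre-tabulation. So the argument as written cannot go through, and it is not clear how to repair it along the lines you suggest.

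The paper's route avoids this entirely by a structural observation you are missing: once one knows $j_U=j_\mu$, every factoring $j_\mu=\pi_{A,\infty}\circ\pi_{0,A}$ must be trivial, because a normal measure is Rudin--Keisler minimal among $\kappa$-complete ultrafilters. Hence for each maximal antichain $A$ either $\pi_{0,A}=\id$ (so $M_A=\V$) or $\pi_{A,\infty}=\id$ (so $M_A=\check{\V}_U$). Not every $\pi_{0,A}$ can be the identity, so some $A$ has $M_A=\check{\V}_U$, and then $G\notin M_A$ is immediate since $\B$ is non-atomic; in fact $\bigcap_A M_A=\check{\V}_U\subsetneq\V^\B/U$. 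The key idea you are lacking is this use of RK-minimality to pin down $M_A$ exactly, rather than trying to argue directly that a particular representing function cannot exist.
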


\begin{proof}
This is a simple consequence of Corollary 81 in \cite{HamkinsSeabold:BULC}. Letting $\mu$ be a normal ultrafilter on $\kappa$, that corollary says that under the assumptions of the observation, there is an ultrafilter $U$ on $\B$ such that the Boolean ultrapower by $U$ is equal to the ultrapower of $\V$ by $\mu$, and the embeddings $j_U$ and $j_\mu$ are equal. Now, if $A$ is a maximal antichain in $\B$, the embedding $j_U=j_\mu$ factors as $j_\mu=\pi_{A,\infty}\circ\pi_{0,A}$. Since $\mu$ is normal, and hence minimal in the Rudin-Keisler order on $\kappa$-complete ultrafilters, it follows that this factoring is trivial, i.e., one factor is the identity. Now it cannot be that for every maximal antichain, $\pi_{0,A}$ is the identity, because then, it would follow that whenever $B$ is a maximal antichain refining $A$, $\pi_{A,B}$ is the identity, because $\pi_{0,B}=\pi_{A,B}\pi_{0,A}$. But then $\pi_{A,\infty}$ would have to be the identity as well, for every $A$, being the direct limit of this system. So, let $A$ be a maximal antichain such that $\pi_{0,A}$ is not the identity. Then $\pi_{A,\infty}$ is the identity, and so, $M_A=\check{\V}_U$. It follows easily that the intersection model is $\bigcap_A M_A=\check{\V}_U$, but of course, $[\dot{G}]_U\in(\V^\B/U)\ohne\check{\V}_U$, since $\B$ is non-atomic.
\end{proof}

We will explore failures of the \Bukovsky-Dehornoy phenomenon further in a future project. For now, the goal is to find criteria that ensure that it holds. It will be crucial to analyze when the embedding $\pi_{A,\infty}:M_A\To\check{\V}_U$ is itself a Boolean ultrapower embedding.

\begin{defn}
\label{defn:G_A}
If $A$ is a maximal antichain in $\B$, then let $G_A=\pi_{A,\infty}^{-1}``G$.
\end{defn}

\begin{thm}
\label{thm:ImportanceOfG_A}
Let $A\sub\B$ be a maximal antichain. Then
\begin{enumerate}
\item
\label{item:BooleanModelsAreTheSame}
$\V^\B/U\isomorphic M_A^{\pi_{0,A}(\B)}/G_A$.
\item
\label{item:BooleanUltrapowersAreTheSame}
$\pi_{A,\infty}:M_A\To_{G_A}\check{V}_U$,

which means that the direct limit embedding $\pi_{A,\infty}:M_A\To\check{V}_U$ is the same as the Boolean ultrapower embedding $j^{(M_A,\pi_{0,A}(\B),G_A)}:M_A\To\Ult(M_A,\pi_{0,A}(\B),G_A)$.
\item
\label{item:GenericsAreTheSame}
$\chi\rest G:G\isomorphism\tilde{G}$,

where $\tilde{G}=[\pi_A(\dot{G})]_{G_A}$ and $\chi:\V^\B/U\To M_A^{\pi_A(\B)}/G_A$ is the isomorphism from \ref{item:BooleanModelsAreTheSame}.
\end{enumerate}
\end{thm}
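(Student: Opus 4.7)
The plan is to apply the Hamkins--Seabold criterion (Theorem \ref{thm:HamkinsSeaboldCriterionForBeingBooleanUltrapower}) a second time, now to the direct limit embedding $\pi_{A,\infty} \colon M_A \to \check{\V}_U$ in place of the original ultrapower map, the complete Boolean algebra $\pi_{0,A}(\B)$ of $M_A$ in place of $\B$, and the canonical generic filter $G = [\dot{G}]_U$ in place of $F$. Since $\pi_{A,\infty}(\pi_{0,A}(\B)) = j_U(\B)$, the filter $G$ is genuinely a filter on the image of the new Boolean algebra under the new embedding, so the setup is coherent.

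To invoke the criterion I need (i) that $G$ is $\ran(\pi_{A,\infty})$-generic over $M_A$, and (ii) that every $x \in \check{\V}_U$ has the form $\pi_{A,\infty}(f)(b_{A'})$ for some maximal antichain $A'$ of $\pi_{0,A}(\B)$ in $M_A$ and some $f \colon A' \to M_A$ in $M_A$, where $b_{A'}$ is the unique element of $G \cap \pi_{A,\infty}(A')$. Condition (i) is immediate, since $G$ is $\check{\V}_U$-generic and $\ran(\pi_{A,\infty})$ is a submodel. For (ii), the original Boolean ultrapower presentation furnishes a maximal antichain $A''\subseteq\B$ and a function $g\colon A''\to\V$ in $\V$ with $x = j_U(g)(b_{A''})$. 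Setting $A' = \pi_{0,A}(A'')$ and $f = \pi_{0,A}(g)$, the factorization $j_U = \pi_{A,\infty}\circ\pi_{0,A}$ gives $\pi_{A,\infty}(A') = j_U(A'')$ and $\pi_{A,\infty}(f) = j_U(g)$, whence the unique element of $G\cap\pi_{A,\infty}(A')$ coincides with $b_{A''}$, and $x = \pi_{A,\infty}(f)(b_{A'})$.

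The criterion now yields (\ref{item:BooleanUltrapowersAreTheSame}) directly: $\pi_{A,\infty}$ is, up to isomorphism, the Boolean ultrapower embedding of $M_A$ by $\pi_{A,\infty}^{-1}``G = G_A$. Its second conclusion simultaneously supplies an isomorphism $\check{\V}_U[G] \isomorphic M_A^{\pi_{0,A}(\B)}/G_A$; composing with the isomorphism $\V^\B/U \isomorphic \check{\V}_U[G]$ obtained from the criterion applied to $j_U$ itself produces the isomorphism $\chi$ of (\ref{item:BooleanModelsAreTheSame}). For (\ref{item:GenericsAreTheSame}), observe that under the first isomorphism the element $[\dot{G}]_U$ of $\V^\B/U$ is sent to the canonical generic filter $G\subseteq j_U(\B)$ of $\check{\V}_U[G]$, while under the second isomorphism the class $[\pi_{0,A}(\dot{G})]_{G_A} = \tilde{G}$ of $M_A^{\pi_{0,A}(\B)}/G_A$ is sent to that same canonical generic filter. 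Hence $\chi$ sends the set $G$ to the set $\tilde{G}$, and its restriction to the members of $G$ is the claimed bijection with $\tilde{G}$.

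The only real subtlety, which I expect to be the main obstacle, is verifying the coincidence of the ``$b$''-witnesses across the two invocations of the Hamkins--Seabold criterion: namely that $b_{A''}$, computed relative to $\V$ and $\B$, equals $b_{\pi_{0,A}(A'')}$, computed relative to $M_A$ and $\pi_{0,A}(\B)$. This rests entirely on the identity $\pi_{A,\infty}(\pi_{0,A}(A'')) = j_U(A'')$ and the uniqueness of the representative in $G$, after which every remaining assertion reduces to a formal unpacking of the definitions of $G_A$, $\tilde{G}$, and the two applications of the criterion.
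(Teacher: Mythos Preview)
Your approach is correct and genuinely different from the paper's. The paper constructs the isomorphism $\chi$ by hand, setting $\chi([\tau]_U)=[\pi_{0,A}(\tau)]_{G_A}$, and then verifies well-definedness and elementarity via Boolean values, surjectivity via a mixing argument, and part~\ref{item:BooleanUltrapowersAreTheSame} by the explicit computation $\chi\circ\pi_{A,\infty}=j'$. You instead recognize the statement as a second instance of Theorem~\ref{thm:HamkinsSeaboldCriterionForBeingBooleanUltrapower}, this time relativized to $M_A$, $\pi_{0,A}(\B)$, and $\pi_{A,\infty}$, and reduce the work to checking its two hypotheses. The verification of condition (ii) by pushing generators forward along $\pi_{0,A}$ is clean, and your observation that $b_{A''}=b_{\pi_{0,A}(A'')}$ follows from $\pi_{A,\infty}(\pi_{0,A}(A''))=j_U(A'')$ is exactly the right point. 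Your route is more conceptual and avoids the mixing calculation entirely; what it costs is the need to invoke the criterion in a relativized form (with $M_A$ in place of $\V$), which is harmless since the proof of Theorem~\ref{thm:HamkinsSeaboldCriterionForBeingBooleanUltrapower} uses nothing about $\V$ beyond its being a transitive model of enough set theory. The paper's direct route, on the other hand, yields the explicit formula $\chi([\tau]_U)=[\pi_{0,A}(\tau)]_{G_A}$, which is used later (e.g., in the ill-founded version, Theorem~\ref{thm:ImportanceOfG_A-Illfounded}). For part~\ref{item:GenericsAreTheSame}, your argument that both instances of the criterion's isomorphism carry the respective $[\dot{G}]$-class to the same canonical generic $G$ is correct, though it tacitly uses that the isomorphism $\V^\B/U\cong\bar{\V}[F]$ produced by the criterion is the natural one $[\tau]_U\mapsto j(\tau)^F$; you might make that explicit.
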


\begin{proof} Let us first show \ref{item:BooleanModelsAreTheSame}.

We define an embedding $\chi:\V^\B/U\To M_A^{\pi_A(\B)}/G_A$, and show that it is onto. Define $\chi$ by setting
\[\chi([\tau]_U)=[\pi_A(\tau)]_{G_A}\]
First, let us show that this is a correct definition. So suppose $x=[\tau]_U=[\tau']_U$. This means that $\BV{\tau=\tau'}^\B\in U$. Since $j``U\sub G$, this implies that $\BV{j(\tau)=j(\tau')}^{\check{V}_U,j(\B)}\in G$. Since $j=\pi_{A,\infty}\circ\pi_A$, we can pull back via $\pi_{A,\infty}^{-1}$, giving $\BV{\pi_A(\tau)=\pi_A(\tau')}^{M_A,\pi_A(\B)}\in G_A$, as $G_A=\pi_{A,\infty}^{-1}``G$. This means that $[\pi_A(\tau)]_{G_A}=[\pi_A(\tau')]_{G_A}$, and thus shows that $\chi$ is well-defined. Replacing ``$\tau=\tau'$'' with any desired formula $\varphi(\vec{\tau})$ in this argument shows now that $\chi$ is an elementary embedding.\\

\noindent\emph{Note:} The above proof shows that whenever $U'$ is an ultrafilter on $\pi_A(\B)$ such that $\pi_A``U\sub U'$, the stipulation $\chi([\tau]_U)=[\pi_A(\tau)]_{U'}$ is a correct definition of an elementary embedding from $\V^\B/U$ to $M_A^{\pi_A(\B)}/U'$. Note also that it is always the case that $j^{-1}``G=U$. Applying this to $M_A$, $\pi_A(\B)$, this means that if $\pi_{A,\infty}:M_A\To_{U'}\check{V}_U$ and $G=[\dot{G}]_{U'}$, then it has to be the case that $U'=\pi_{A,\infty}^{-1}``G$. \\

Let us now show that $\chi$ is onto. Let $y=[\nu]_{G_A}$ be an arbitrary element of $M_A^{\pi_A(\B)}/G_A$, for some $\nu\in M_A^{\pi_A(\B)}$. Let $\nu=[f]_{U_A}$, where $f:A\To\V^\B$, $f\in\V$. Let $\tau=\mix(f)$ be the canonical name resulting from mixing the names $f(a)$, for $a\in A$. So
\[\text{for every}\ a\in A,\quad  a\le\BV{\tau=f(a)}\]
Let $x=[\tau]_U$. We claim that $\chi(x)=y$.

By definition, $\chi(x)=\chi([\tau]_U)=[\pi_A(\tau)]_{G_A}$. And $y=\bigl[[f]_{U_A}\bigr]_{G_A}$. So in order to show that $\chi(x)=y$, we have to show that
\[[\pi_A(\tau)]_{G_A}=\bigl[[f]_{U_A}\bigr]_{G_A}\]
This is equivalent to
\[\BV{\pi_A(\tau)=[f]_{U_A}}^{M_A,\pi_A(\B)}\in G_A\]
Let $h:A\To\B$ be defined in $\V$ by
\[h(a)=\BV{\tau=f(a)}\]
So
\[[h]_{U_A}=\BV{\pi_A(\tau)=[f]_{U_A}}^{M_A,\pi_A(\B)}\]
and we have to show that $[h]_{U_A}\in G_A$. By definition of $G_A$, this is equivalent to saying that
\[\pi_{A,\infty}([h]_{U_A})\in G\]
By definition, $\pi_{A,\infty}([h]_{U_A})=[h]_U=j(h)(b_A)=\BV{j(\tau)=j(f)(b_A)}^{\check{\V}_U,j(\B)}$. But by definition of $\tau$ as mixing $f$, in $\V$, for every $a\in A$, $a\le\BV{\tau=f(a)}$, so by elementarity of $j$,
\[\text{for every}\ a\in j(A), \quad a\le\BV{j(\tau)=j(f)(a)}^{\check{\V}_U,j(\B)}\]
Since $b_A\in j(A)$, it follows that $b_A\le\BV{j(\tau)=j(f)(b_A)}^{\check{\V}_U,j(\B)}$. So, since also, $b_A\in G$, it follows that $\BV{j(\tau)=j(f)(b_A)}^{\check{\V}_U,j(\B)}\in G$. This shows that $\chi$ is onto.

Let us now show part \ref{item:BooleanUltrapowersAreTheSame}. Writing
\[j':(M_A,\pi_A(\B),G_A)\To_{G_A}\Ult(M_A,\pi_A(\B),G_A)\]
we want to show that $\chi\circ\pi_{A,\infty}=j'$. Since $\chi$ is the identity, this implies that $j'=\pi_{A,\infty}$.

Let $x=[f]_{U_A}\in M_A$, $f:A\To\V$, $f\in\V$. Then
\begin{ea*}
  \chi(\pi_{A,\infty}(x))&=&\chi(\pi_{A,\infty}([f]_{U_A})\\
  &=&\chi([f]_U)\\
  &=&\chi(j(f)(b_A))\\
  &=&\chi((j(\tilde{f})(b_A))^G),\qquad\text{where $\tilde{f}(a)=(f(a))\check{}$ for all $a\in A$}\\
  &=&\chi([\mix(\tilde{f})]_U)\\
  &=&[\pi_A(\mix(\tilde{f}))]_{G_A}
\end{ea*}%
by definition of $\chi$. We want to show that this is the same as $j'(x)$, which, in turn, is $[\check{x}]_{G_A}$. So we have to show:
\[[\pi_A(\mix(\tilde{f}))]_{G_A}=\bigl[([f]_{U_A})\check{}\bigr]_{G_A}\]
To see this, first, observe that it is equivalent to
\[\BV{\pi_A(\mix(\tilde{f}))=([f]_{U_A})\check{}}^{M_A,\pi_A(\B)}\in G_A\]
As in the previous proof, define $h:A\To\B$ by
\[h(a)=\BV{\mix(\tilde{f})=(f(a))\check{}}^\B\]
So we have to show that $\pi_{A,\infty}([h]_{U_A})\in G$. But \[\pi_{A,\infty}([h]_{U_A})=[h]_U=j(h)(b_A)=\BV{j(\mix(\tilde{f}))=(j(f)(b_A))\check{}}^{\check{\V}_U,j(\B)}\] Since $b_A\in j(A)$ and $j(\mix(\tilde{f}))$ mixes the $(j(f)(a))\check{}$ names, it follows that
\[b_A\le\BV{j(\mix(\tilde{f}))=(j(f)(b_A))\check{}}^{\check{\V}_U,j(\B)}\]
and since $b_A\in G$, it follows that \[\BV{j(\mix(\tilde{f}))=(j(f)(b_A))\check{}}^{\check{\V}_U,j(\B)}\in G\]
This completes the proof.

To see \ref{item:GenericsAreTheSame}, we can use the fact that $\chi$ is onto, and argue:
\begin{ea*}
[\sigma]_U\in G&\iff\BV{\sigma\in\dot{G}}\in U\\
&\implies&(\check{\V}_U,j(\B))\models\BV{j(\sigma)\in j(\dot{G})}\in G\\
&\implies&(M_A,G_A)\models\BV{\pi_A(\sigma)\in \pi_A(\dot{G})}\in G_A\\
&\iff&(M_A,G_A)\models[\pi_A(\sigma)]_{G_A}\in[\pi_A(\dot{G})]_{G_A}\\
&\iff&\chi([\sigma]_U)\in\tilde{G}.
\end{ea*}

For the converse, we can argue similarly:
\begin{ea*}
[\sigma]_U\notin G&\iff\BV{\sigma\notin\dot{G}}\in U\\
&\implies&(\check{\V}_U,j(\B))\models\BV{j(\sigma)\notin j(\dot{G})}\in G\\
&\implies&(M_A,G_A)\models\BV{\pi_A(\sigma)\notin \pi_A(\dot{G})}\in G_A\\
&\iff&(M_A,G_A)\models[\pi_A(\sigma)]_{G_A}\notin[\pi_A(\dot{G})]_{G_A}\\
&\iff&\chi([\sigma]_U)\notin\tilde{G}.
\end{ea*}
\end{proof}

For the remainder of this section, except for the subsection in the end, we will assume that $\V^\B/U$ is well-founded, and hence, we can take all the models $M_A$, $\V^\B/U$ and $\check{\V}_U$ to be transitive. Without this assumption, it does not make sense to consider the intersection $\bigcap_A M_A$, for example, and so, taken literally, the \Bukovsky-Dehornoy phenomenon does not make sense. We will explore a version of the phenomenon for ill-founded ultrapowers in the subsection at the end of the present section.

Note that the embedding $j^{M_A,\pi_{0,A}(\B),G_A}$ (which is the same as the embedding $\pi_{A,\infty}$) is internal to $\calM_A$ if $G_A\in M_A$. This makes the assumption $G_A\in M_A$ a natural one.

\begin{defn}
An antichain $A\sub\B$ is \emph{simple} (with respect to $U$) if $G_A\in M_A$ (when the ultrapowers are formed with respect to $U$).
\end{defn}

We will often suppress the reference to the ultrafilter $U$ used for the formation of the (Boolean) ultrapowers, but of course, the meaning of $G_A$ and $M_A$ depends on $U$.

\begin{question}
If $B\le^*A$ are maximal antichains in $\B$ and $B$ is simple, then does it follow that $A$ is simple?
\end{question}

This is clearly the case if $G_A\in\ran(\pi_{A,B})$, and in that case, $G_A=\pi_{A,B}^{-1}(G_B)\in M_A$. This is because
\begin{ea*}
x\in\pi_{A,B}^{-1}(G_B)&\iff&\pi_{A,B}(x)\in G_B\\
&\iff&\pi_{B,\infty}(\pi_{A,B}(x))\in G\\
&\iff&\pi_{A,\infty}(x)\in G\\
&\iff&x\in G_A.
\end{ea*}%
If $A$ is simple, then not only is the embedding $\pi_{A,\infty}=j^{(M_A,\pi_{0,A}(\B),G_A)}$ internal to $\calM_A$, but the entire construction of $\V^\B/U=M_A^{\pi_{0,A}(\B)}/G_A$ is internal to $\calM_A$. This gives the following corollary.

\begin{cor}
\label{cor:EasyDirectionForSimpleAntichainsLocal}
If $A$ is simple, then $\V^\B/U\sub M_A$.
\end{cor}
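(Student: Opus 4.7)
The plan is to exploit the isomorphism $\chi\colon\V^\B/U\isomorphic M_A^{\pi_{0,A}(\B)}/G_A$ from Theorem \ref{thm:ImportanceOfG_A}\ref{item:BooleanModelsAreTheSame}, and to observe that, under simplicity, the Boolean model on the right-hand side can be built entirely inside $M_A$. Since we are assuming $\V^\B/U$ is well-founded and identified with its transitive collapse, the claim $\V^\B/U\sub M_A$ will then follow from the absoluteness of well-foundedness and of the Mostowski collapse.

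In detail, since $A$ is simple we have $G_A\in M_A$, and of course $\pi_{0,A}(\B)\in M_A$ as well. Thus the three ingredients needed to form the Boolean model $M_A^{\pi_{0,A}(\B)}/G_A$, namely the ground model $M_A$, the complete Boolean algebra $\pi_{0,A}(\B)$, and the ultrafilter $G_A$, are all available inside $M_A$. The construction of the equivalence classes $[\sigma]_{G_A}$ of names $\sigma\in M_A^{\pi_{0,A}(\B)}$ modulo the relation $\BV{\sigma=\tau}\in G_A$, together with the pseudo-$\in$ relation $[\sigma]_{G_A}\,E\,[\tau]_{G_A}\iff\BV{\sigma\in\tau}\in G_A$, is absolute between $M_A$ and $\V$ (using Scott's trick to represent the proper-class equivalence classes by sets of minimal rank). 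By Theorem \ref{thm:ImportanceOfG_A}\ref{item:BooleanModelsAreTheSame}, $(M_A^{\pi_{0,A}(\B)}/G_A, E)$ is isomorphic to $\V^\B/U$, which we have assumed is well-founded; since well-foundedness is upward absolute, $M_A$ sees that $(M_A^{\pi_{0,A}(\B)}/G_A, E)$ is well-founded as well.

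Hence $M_A$ can form the Mostowski collapse of this Boolean model internally, producing a transitive class $N\in M_A$ (in the sense that $N$ is a definable class of $M_A$). By absoluteness of the Mostowski collapse for well-founded extensional relations, $N$ coincides with the transitive collapse of $(M_A^{\pi_{0,A}(\B)}/G_A,E)$ computed in $\V$, which in turn is $\V^\B/U$ under our standing identification. Therefore every element of $\V^\B/U$ is an element of $M_A$, i.e., $\V^\B/U\sub M_A$. The proof is really a one-step observation; the only conceptual content is that simplicity is exactly what is needed to internalize the entire Boolean-ultrapower construction of $\V^\B/U$ from $M_A$'s viewpoint, so no real obstacle arises once this is recognized.
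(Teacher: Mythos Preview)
Your argument is correct and is exactly the paper's approach: the sentence immediately preceding the corollary already says that simplicity makes ``the entire construction of $\V^\B/U=M_A^{\pi_{0,A}(\B)}/G_A$ \ldots\ internal to $M_A$,'' and you have merely spelled out the absoluteness details behind that remark. One terminological quibble: the direction you need is that well-foundedness passes from $\V$ down to the inner model $M_A$ (any descending $E$-sequence in $M_A$ would be one in $\V$), so ``downward absolute'' is the phrase you want, not ``upward absolute.''
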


What we are aiming to show is that
\[\bigcap_A M_A=\V^\B/U\]
or to determine under which conditions this is true. The previous corollary may be used for the direction from right to left, giving

\begin{lem}
\label{lem:EasyDirectionForSimpleAntichainsGlobal}
If every maximal antichain $A\sub\B$ is simple, then
\[\V^\B/U\sub\bigcap_A M_A.\]
More generally,
\[\V^\B/U\sub\bigcap_{A\ \text{is simple}}M_A.\]
\end{lem}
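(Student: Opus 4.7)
The plan is to derive the lemma directly from Corollary \ref{cor:EasyDirectionForSimpleAntichainsLocal}, which already handles a single simple antichain: it says $\V^\B/U \sub M_A$ whenever $A$ is simple. Since the conclusion of the lemma is a containment of $\V^\B/U$ in an intersection of the $M_A$'s (over all simple $A$, or in the stronger hypothesis, over all maximal antichains), the only work remaining is to observe that a containment into every member of a family passes to a containment into its intersection.

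Concretely, first I would assume the more general hypothesis: let $A \sub \B$ be an arbitrary simple maximal antichain. By Corollary \ref{cor:EasyDirectionForSimpleAntichainsLocal}, $\V^\B/U \sub M_A$. Taking the intersection over all such $A$ gives
\[
\V^\B/U \;\sub\; \bigcap_{A\ \text{is simple}} M_A,
\]
which is the second (stronger) assertion. The first assertion is then immediate: under the stronger hypothesis that \emph{every} maximal antichain is simple, the indexing set in the intersection becomes the collection of all maximal antichains of $\B$, yielding
\[
\V^\B/U \;\sub\; \bigcap_{A} M_A.
\]

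There is no real obstacle here: the content is entirely in the already-established Corollary \ref{cor:EasyDirectionForSimpleAntichainsLocal}, whose proof relied on the fact that if $A$ is simple then $G_A \in M_A$, so that the Boolean ultrapower construction witnessing $\V^\B/U \isomorphic M_A^{\pi_{0,A}(\B)}/G_A$ (Theorem \ref{thm:ImportanceOfG_A}\eqref{item:BooleanModelsAreTheSame}) can be carried out inside $M_A$. The present lemma is just the global consequence of applying that observation uniformly in $A$, so the proof will be only a few lines long.
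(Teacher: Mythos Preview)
Your proposal is correct and matches the paper's approach exactly: the paper states this lemma immediately after Corollary~\ref{cor:EasyDirectionForSimpleAntichainsLocal} with the remark that ``the previous corollary may be used for the direction from right to left,'' and gives no further proof. Your write-up simply makes that one-line deduction explicit.
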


Towards formulating a sufficient criterion for the other inclusion, we need the following definition.

\begin{defn}
\label{def:Skeleton}
A \emph{skeleton} for ($\B ,U$) is a set $\mathfrak{A}$ of maximal antichains in $\B$ such that
\begin{enumerate}
\item $\mathfrak{A}$ is directed under refinement,
\item $j``\mathfrak{A}\in\V^\B/U$,
\item $\check{V}_U=\{j(f)(b_A)\st A\in\mathfrak{A}\ \text{and}\ f:A\To\V\}$.
\end{enumerate}
A skeleton $\mathfrak{A}$ is \emph{simple} if every $A\in\mathfrak{A}$ is simple.
\end{defn}

Note that if $\delta=|\mathfrak{A}|$, then $j``\mathfrak{A}\in\V^\B/U$ iff $j``\delta\in\V^\B/U$, and that is the case iff ${}^\delta(\V^\B/U)\sub\V^\B/U$ (for this last equivalence, see \cite[Theorem 28]{HamkinsSeabold:BULC}). In particular, if $\delta\le\crit(j)$, then $j``\mathfrak{A}\in\V^\B/U$.

We now show that the converse of Lemma \ref{lem:EasyDirectionForSimpleAntichainsGlobal} holds, assuming the existence of a simple skeleton.

\begin{thm}
\label{thm:SimpleSkeletonsMakeItWork}
If there is a simple skeleton $\mathfrak{A}$ for $\B$, $U$, then the \Bukovsky-Dehornoy phenomenon holds for the Boolean ultrapower of $\V$ by U. That is,
\[\V^\B/U=\bigcap_{A\in\mathfrak{A}}M_A\]
\end{thm}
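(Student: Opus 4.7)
The inclusion $\V^\B/U \subseteq \bigcap_{A \in \mathfrak{A}} M_A$ is immediate from Lemma~\ref{lem:EasyDirectionForSimpleAntichainsGlobal}: since every $A \in \mathfrak{A}$ is simple by hypothesis, $\V^\B/U \subseteq M_A$ for each such $A$, so the intersection also contains $\V^\B/U$.

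For the reverse inclusion, fix $x \in \bigcap_{A \in \mathfrak{A}} M_A$; the task is to exhibit $x$ as $\sigma^G$ for some $j(\B)$-name $\sigma \in \check\V_U$, equivalently as $[\tau]_U$ for some $\tau \in \V^\B$. Using $\AC$ and the fact that $\mathfrak{A}$ is a set, fix in $\V$ a uniform system $\vec f = \langle f_A : A \in \mathfrak{A}\rangle$ of representatives $f_A \colon A \to \V$ with $x = [f_A]_{U_A}$ under the Mostowski collapse of $M_A$. Applying $j$, the smallness assumption $j``\mathfrak{A} \in \V^\B/U$ yields that the restriction $j(\vec f)\restriction j``\mathfrak{A}$ belongs to $\V^\B/U$, and combined with the definable map $j(A) \mapsto b_A$ (from $G$ and $j``\mathfrak{A}$), the function $j(A) \mapsto j(f_A)(b_A) = \pi_{A,\infty}(x)$ also lies in $\V^\B/U$. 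Now working inside a fixed simple $A \in \mathfrak{A}$, Theorem~\ref{thm:ImportanceOfG_A} identifies the Boolean model $M_A^{\pi_{0,A}(\B)}/G_A$ with $\V^\B/U$, and this identification is carried out internally to $M_A$ since $G_A \in M_A$. Therefore, showing $x \in \V^\B/U$ reduces to producing a $\pi_{0,A}(\B)$-name $\nu \in M_A$ with $[\nu]_{G_A} = x$.

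The construction of $\nu$ is where the skeleton conditions do their real work, and where the main obstacle lies. The naive mix-name $\tau_A$ over a single $A$ only satisfies $[\tau_A]_U = \pi_{A,\infty}(x)$, which lies in $\check\V_U$ but differs from $x$ in general; a single antichain is thus insufficient. My plan is to exploit the directedness of $\mathfrak{A}$ together with condition~3 of Definition~\ref{def:Skeleton}---that every element of $\check\V_U$ has the form $j(g)(b_B)$ for some $B \in \mathfrak{A}$ and $g \colon B \to \V$---to splice the coherent system $\{f_B : B \in \mathfrak{A},\ B \le^* A\}$ into a single name $\nu$ whose interpretation really is $x$. The smallness assumption $j``\mathfrak{A} \in \V^\B/U$ is precisely what permits this gluing to be performed inside $\V^\B/U$ itself. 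The delicate point, which I expect to be the main obstacle, is to verify that the resulting $\nu$ genuinely interprets to $x$ rather than merely to some image $\pi_{B,\infty}(x)$ under a deeper direct-limit embedding.
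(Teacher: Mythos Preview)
Your easy direction is fine, and your instinct to assemble the parameters $j(\vec f)$, $j``\mathfrak{A}$, and the map $B\mapsto b_B$ inside $\V^\B/U$ is exactly right. But the plan for the hard direction has two genuine gaps.

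First, you never reduce to the case $x\sub\check{\V}_U$. The paper does this by taking a counterexample $\tilde a$ of minimal rank in $\bigcap_{A\in\mathfrak{A}}M_A\setminus\V^\B/U$, so that $\tilde a\sub\V^\B/U$, and then coding $\tilde a$ by a set of ordinals $a$ using a bijection $f:\V_\gamma\to|\V_\gamma|$ chosen in $\V^\B/U$. This reduction is essential: without it there is no reason your element-by-element definition can even be formulated.

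Second, and more importantly, you represent the wrong object. Letting $[f_A]_{U_A}=x$ gives you $j(f_A)(b_A)=\pi_{A,\infty}(x)$, and as you yourself note, these images do not recover $x$; there is no coherence relation among them that lets you take a limit. The paper instead uses the simplicity of $A$ in a second way: since $G_A\in M_A$, the map $\pi_{A,\infty}$ is definable in $M_A$, and therefore the \emph{pullback} $a_A:=\pi_{A,\infty}^{-1}{}``a$ lies in $M_A$. Let $f_A$ represent $a_A$ rather than $a$. Then $j(f_A)(b_A)=\pi_{A,\infty}(a_A)$, and this set agrees with $a$ on $\ran(\pi_{A,\infty})$: for $\alpha\in\ran(\pi_{A,\infty})$ one has $\alpha\in\pi_{A,\infty}(a_A)$ iff $\pi_{A,\infty}^{-1}(\alpha)\in a_A$ iff $\alpha\in a$. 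Since the ranges $\ran(\pi_{B,\infty})$ for $B\in\mathfrak{A}$ exhaust $\check{\V}_U$ (this is condition~3 of the skeleton), one gets the clean characterization
\[
\alpha\in a\iff\exists A\in j``\mathfrak{A}\ \forall B\in j``\mathfrak{A}\ (B\le^*A\implies\alpha\in j(\vec f)_B(b_{j^{-1}(B)})),
\]
which is a first-order definition of $a$ inside $\V^\B/U$ from the parameters you already identified. No ``splicing into a single name'' is needed; the detour through $M_A^{\pi_{0,A}(\B)}/G_A$ is unnecessary once you have the right $\vec f$.
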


\begin{proof} The direction from left to right follows from Corollary \ref{cor:EasyDirectionForSimpleAntichainsLocal}, since every $A$ in $\mathfrak{A}$ is simple.

For the converse, assume that the inclusion $\bigcap_{A\in\mathfrak{A}}M_A\sub\V^\B/U$ fails. Let $\tilde{a}\in\bigcap_{A\in\mathfrak{A}}M_A\ohne\V^\B/U$ be $\in$-minimal. It follows that $\tilde{a}\sub\V^\B/U$. Let $\gamma$ be the rank of $\tilde{a}$, and in $\V^\B/U$, let $f$ be a bijection between $\V_\gamma$ and its cardinality. Let $a=f``\tilde{a}$. We will show that $a\in\V^\B/U$, which implies that $\tilde{a}\in\V^\B/U$ as well, a contradiction.\footnote{The usual argument for reducing the claim to sets of ordinals does not apply, because while it is true that if $\tilde{a}\in M_A$, it can be coded by a set of ordinals in $M_A$, the resulting sets of ordinals in other models $M_B$ may be different.}

To show that $a\in\V^\B/U$, we are going to prove that $a$ is definable from parameters in $\V^\B/U$.

First, fix $A\in\mathfrak{A}$. Since $G_A\in M_A$, the function $\pi_{A,\infty}$ is definable in $M_A$, and so, the set $a_A:=\pi_{A,\infty}^{-1}``a\in M_A$. This means that there is a function $f_A:A\To\V$ such that $a_A=[f_A]_{U_A}$.

Now, let us consider the function
$A\mapsto f_A$ and denote it by $\vf$, so $f_A=\vf(A)=(\vf)_A$.

In $\V^\B/U$, we can define the map
$B\mapsto\tilde{b}_B,$
where $B$ is a maximal antichain in $j(\B)$ with $B\in\check{\V}_U$, and $\tilde{b}_B$ is the unique condition in $B\cap G$. For this definition, $G$ is needed as a parameter, so it can be carried out in $\V^\B/U$, but not in $\check{\V}_U$.

In the proof of the following claim, we will make use of the assumption that $j``\mathfrak{A}\in\V^\B/U$.

\claim{$(*)$}{$\alpha\in a$ iff there is an $A\in j``\mathfrak{A}$ such that for all $B\in j``\mathfrak{A}$ refining $A$,
\[\alpha\in j(\vf)_B(\tilde{b}_B).\]}

\prooff{$(*)$}
For the direction from left to right, assume that $\alpha\in a$. Pick $\bar{A}\in\mathfrak{A}$ so that $\alpha\in\ran(\pi_{\bar{A},\infty})$. Let $A=j(\bA)$. Then $A$ witnesses the right hand side of $(*)$, because if $B$ refines $A$ and is in $j``\mathfrak{A}$, then, letting $\bar{B}=j^{-1}(B)$, $\bar{B}$ refines $\bar{A}$, and hence, $\alpha\in\ran(\pi_{\bar{B},\infty})$. It follows that $\balpha=\pi_{\bar{B},\infty}^{-1}(\alpha)\in a_{\bar{B}}=[f_{\bar{B}}]_{U_{\bar{B}}}$. Applying $\pi_{\bar{B},\infty}$ to this fact, we get that $\alpha\in\pi_{\bar{B},\infty}([f_{\bar{B}}]_{U_{\bar{B}}})=j(f_{\bar{B}})(b_{\bar{B}})=j(\vf)_B(\tilde{b}_B)$.

For the direction from right to left, let $A$ be as in the claim. Let $\bar{A}=j^{-1}(A)$. Pick a refinement $\bar{B}$ of $\bar{A}$ in $\mathfrak{A}$ such that $\alpha\in\ran(\pi_{\bar{B},\infty})$, which is possible, since $\mathfrak{A}$ is a skeleton. By elementarity, $B:=j(\bar{B})$ refines $A$. So by our assumption, we have
\[\alpha\in j(\vf)_B(\tilde{b}_B) \]
But $j(\vf)_B(\tilde{b}_B)=j(f_{\bar{B}})(b_{\bar{B}})=\pi_{\bar{B},\infty}([f_{\bar{B}}]_{U_{\bar{B}}})=\pi_{\bar{B},\infty}(a_{\bar{B}})$.
So $\alpha\in\pi_{\bar{B},\infty}(a_{\bar{B}})$. This of course implies that $\alpha\in a$, because, letting $\pi_{\bar{B},\infty}(\balpha)=\alpha$, we have that $\pi_{\bar{B},\infty}(\balpha)\in\pi_{\bar{B},\infty}(a_{\bar{B}})$, so $\balpha\in a_{\bar{B}}=\pi_{\bar{B},\infty}^{-1}``a$, so $\alpha\in a$.
\qedd{(*)}

The point is now that we can define $a$ inside $\V^\B/U$: it is
\[\{\alpha\in\On\st\exists A\in j``\mathfrak{A}\forall B\in j``\mathfrak{A} (B\le^*A\implies
\alpha\in j(\vf)_B(\tilde{b}_B))\}\]
where $B\le^*A$ means that $B$ is a refinement of $A$.
\end{proof}

Hence, we get:

\begin{thm}
\label{thm:Bukovsky-DehornoyFromSimpleSkeleton}
Suppose $U$ is an ultrafilter on $\B$ such that the Boolean ultrapower by $U$ is well-founded. Suppose further that there is a simple skeleton for $(\B,U)$. Then
\[\V^\B/U=\bigcap_{A\ \text{simple}}M_A\]
If further, every maximal antichain in $\B$ is simple, then
\[\V^\B/U=\bigcap_{A}M_A\]
\end{thm}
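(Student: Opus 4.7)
The proof should be essentially a bookkeeping combination of Corollary~\ref{cor:EasyDirectionForSimpleAntichainsLocal}, Lemma~\ref{lem:EasyDirectionForSimpleAntichainsGlobal}, and the already-proved Theorem~\ref{thm:SimpleSkeletonsMakeItWork}. I do not expect any genuinely new construction to be necessary: all the technology (the ``simple antichain implies $\V^\B/U\subseteq M_A$'' fact, and the reverse inclusion via a skeleton) is in place.

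For the first equality, I would prove the two inclusions separately. For $\V^\B/U\subseteq \bigcap_{A\ \text{simple}}M_A$, I would simply invoke Corollary~\ref{cor:EasyDirectionForSimpleAntichainsLocal} one antichain at a time: whenever $A$ is simple, $\V^\B/U\subseteq M_A$, hence the containment holds after intersecting over all simple $A$. For the reverse inclusion $\bigcap_{A\ \text{simple}}M_A\subseteq \V^\B/U$, I would use the given simple skeleton $\mathfrak{A}$. By hypothesis every $A\in\mathfrak{A}$ is simple, so $\bigcap_{A\ \text{simple}}M_A\subseteq \bigcap_{A\in\mathfrak{A}}M_A$. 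But Theorem~\ref{thm:SimpleSkeletonsMakeItWork} identifies the latter intersection with $\V^\B/U$, which closes the loop.

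For the second statement, under the additional hypothesis that every maximal antichain of $\B$ is simple, the index set in the first equation coincides with the set of all maximal antichains. Thus $\bigcap_A M_A = \bigcap_{A\ \text{simple}} M_A = \V^\B/U$ by the first part. Equivalently, one can cite Lemma~\ref{lem:EasyDirectionForSimpleAntichainsGlobal} for the $\subseteq$ direction and the simple-skeleton argument for the $\supseteq$ direction.

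Since each step is a direct appeal to a preceding result, there is no real obstacle; the only thing to be careful about is recording that the simple skeleton's existence is exactly what feeds Theorem~\ref{thm:SimpleSkeletonsMakeItWork}, and that restricting the intersection to the (possibly larger) class of \emph{all} simple antichains can only shrink it, so the key inclusion survives.
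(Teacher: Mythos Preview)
Your proposal is correct and matches the paper's own proof essentially line for line: the paper also gets $\V^\B/U\subseteq M_A$ for simple $A$ from Corollary~\ref{cor:EasyDirectionForSimpleAntichainsLocal}, and for the reverse inclusion argues $\bigcap_{A\ \text{simple}}M_A\subseteq\bigcap_{A\in\mathfrak{A}}M_A\subseteq\V^\B/U$ via Theorem~\ref{thm:SimpleSkeletonsMakeItWork}. The second statement is handled exactly as you describe.
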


\begin{proof} In both cases, the direction from left to right follows because if $A$ is simple, then $\V^\B/U$ is contained in $M_A$. For the direction from right to left, let $\mathfrak{A}$ be a simple skeleton. By Theorem \ref{thm:SimpleSkeletonsMakeItWork}, it follows that $\bigcap_{A\ \text{simple}}M_A\sub\bigcap_{A\in\mathfrak{A}}M_A\sub\V^\B/U$.
\end{proof}

\subsection{\Bukovsky-Dehornoy for ill-founded Boolean ultrapowers}

The proof of Theorem \ref{thm:SimpleSkeletonsMakeItWork} suggests a natural version of the \Bukovsky-Dehornoy phenomenon for ill-founded Boolean ultrapowers. Note that if the models $M_A$ are not well-founded, and hence cannot be taken to be transitive, then it does not make sense to look at $\bigcap_A M_A$.

In order to formulate the version of \Bukovsky-Dehornoy for ill-founded models, we need a little bit of notation. If $M=\kla{|M|,\in^M}$ is a model of set theory, where $\in^M$ is $M$'s interpretation of $\in$, then any member $a$ of $M$ can be viewed as a subset $a^c$ of $|M|$ by setting $a^c=\{b\in|M|\st M\models b\in a\}$. This is often referred to as the set coded by $a$. Let us write
\[M^c=\{a^c\st a\in|M|\}\]
So $M^c$ is the set of sets coded by members of $|M|$. In the context of ill-founded Boolean ultrapowers, a maximal antichain $A\sub\B$ is simple if $G_A:=\pi_A,\infty^{-1}``([\dot{G}]_U)^c$ is coded in $M_A$, and the version of Theorem \ref{thm:ImportanceOfG_A} is as follows.

\begin{thm}
\label{thm:ImportanceOfG_A-Illfounded}
Let $A\sub\B$ be a maximal antichain, $U$ an ultrafilter on $\B$, $j:\V\emb{U}\check{\V}_U$ and $\pi_{0,A}\V\emb{U_A}M_A$. Then
\begin{enumerate}
\item
There is an isomorphism $\chi:\V^\B/U\To M_A^{\pi_{0,A}(\B)}/G_A$.
\item
Letting $j':M_A\emb{G_A}\check{M_A}^{\pi_{0,A}(\B)}_{G_A}$ be the Boolean ultrapower embedding,
$\chi\circ\pi_{A,\infty}=j'$.
\item
\label{item:GenericsAreTheSame-Illfounded}
$\chi\rest([\dot{G}]_U)^c:[\dot{G}]_U\isomorphism\tilde{G}$,
where $\tilde{G}=([\pi_A(\dot{G})]_{G_A})^c$.
\end{enumerate}
\end{thm}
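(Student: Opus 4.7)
The plan is to retrace the proof of Theorem \ref{thm:ImportanceOfG_A} essentially verbatim, substituting the coded-set relation $a^c = \{b \in |M| \st M \models b \in a\}$ for genuine set membership whenever the latter would require transitivity. The original argument was carried out entirely at the level of Boolean values, so well-foundedness was never truly used: $G_A := \pi_{A,\infty}^{-1}``([\dot{G}]_U)^c$ is still a well-defined subset of $M_A$, the factorization $j = \pi_{A,\infty}\circ\pi_{0,A}$ survives, and the relation $U = j^{-1}``([\dot{G}]_U)^c$ continues to make sense. The three parts of the theorem should then fall out by the same Boolean-value computations as before.

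For part (1), I would define $\chi([\tau]_U) = [\pi_A(\tau)]_{G_A}$ and verify well-definedness and elementarity in one stroke by observing that for any formula $\varphi$, $\BV{\varphi(\vec\tau)}^\B \in U$ iff $\BV{\varphi(j(\vec\tau))}^{j(\B)} \in ([\dot{G}]_U)^c$, iff $\BV{\varphi(\pi_A(\vec\tau))}^{\pi_A(\B)} \in G_A$; the first equivalence is the characterization of $U$ as $j^{-1}$ applied to the coded generic, and the second comes from the definition of $G_A$ together with the factorization of $j$ through $\pi_{A,\infty}$. For surjectivity, given $[\nu]_{G_A}$ with $\nu = [f]_{U_A}$ and $f : A \to \V^\B$, I set $\tau = \mix(f)$. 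The defining property $a \le \BV{\tau = f(a)}^\B$ for each $a \in A$ is elementarily transported by $j$, and since $b_A \in j(A)$ and $b_A \in ([\dot{G}]_U)^c$, one deduces $\BV{j(\tau) = j(f)(b_A)}^{j(\B)} \in ([\dot{G}]_U)^c$. Pulling back through $\pi_{A,\infty}^{-1}$ yields the Boolean value certifying $[\pi_A(\tau)]_{G_A} = [\nu]_{G_A}$.

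Part (2) is the analogous calculation: for $x = [f]_{U_A} \in M_A$, setting $\tilde f(a) = (f(a))\check{}$ one writes $\chi(\pi_{A,\infty}(x)) = \chi([f]_U) = \chi([\mix(\tilde f)]_U) = [\pi_A(\mix(\tilde f))]_{G_A}$, and compares with $j'(x) = \bigl[([f]_{U_A})\check{}\bigr]_{G_A}$; the two agree because $b_A \le \BV{j(\mix(\tilde f)) = (j(f)(b_A))\check{}}^{j(\B)}$ by the defining property of $\mix$, and $b_A \in ([\dot G]_U)^c$ translates this into the requisite membership in $G_A$ after pulling back. Part (3) then drops out of the elementarity of $\chi$ applied to the atomic formula $x \in \dot G$: $[\sigma]_U \in ([\dot{G}]_U)^c$ unfolds to $\BV{\sigma \in \dot{G}}^\B \in U$, which by the same three-term chain of equivalences used in part (1) is the same as $\chi([\sigma]_U) \in ([\pi_A(\dot{G})]_{G_A})^c = \tilde G$. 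The only genuine obstacle I anticipate is bookkeeping discipline: one must carefully distinguish between actual set membership in the ambient universe (in $G_A$, in $U$, in $([\dot{G}]_U)^c$) and membership inside an ill-founded model (witnessed by a Boolean value). No new mathematical idea beyond the proof of Theorem \ref{thm:ImportanceOfG_A} should be required.
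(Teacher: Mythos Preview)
Your proposal is correct and matches the paper's intent: the paper gives no proof for this theorem, presenting it simply as the ill-founded analogue of Theorem~\ref{thm:ImportanceOfG_A} and leaving the straightforward transcription to the reader. Your plan to rerun that proof verbatim with $([\dot{G}]_U)^c$ in place of $G$ and coded-set membership in place of genuine membership is exactly what is expected, and your identification of the only real issue---careful bookkeeping between ambient membership and internal (Boolean-valued) membership---is apt.
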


In the context of ill-founded Boolean ultrapowers, the definition of a skeleton has to be modified in much the same way we had to modify the meaning of simplicity of a maximal antichain, so as to require that $j``\mathfrak{A}$ be coded in $\V^\B/U$, rather than just saying that $j``\mathfrak{A}\in\V^\B/U$.

\begin{thm}
\label{thm:Bukovsky-DehornoyIllfounded}
Suppose $U$ is an ultrafilter on $\B$ that may give rise to an ill-founded ultrapower, and suppose there is a skeleton $\mathfrak{A}$ for $(\B,U)$. Then
\[\bigcap_{A\in\mathfrak{A}}\{x\sub\check{\V}_U\st \pi_{A,\infty}^{-1}``x\in (M_A)^c\}\sub\{x\sub\check{\V}_U\st x\in(\V^\B/U)^c\}\]
The converse of this inclusion is equivalent to saying that $\mathfrak{A}$ is simple. Moreover, if every maximal antichain is simple, then
\[\bigcap_{A}\{x\sub\check{\V}_U\st \pi_{A,\infty}^{-1}``x\in (M_A)^c\}=\{x\sub\check{\V}_U\st x\in(\V^\B/U)^c\}\]
\end{thm}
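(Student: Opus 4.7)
The plan is to adapt the proofs of Corollary \ref{cor:EasyDirectionForSimpleAntichainsLocal} and Theorem \ref{thm:SimpleSkeletonsMakeItWork} to the ill-founded setting. The conceptual shift is that ``$a\in M_A$'' is uniformly replaced by ``$a\in(M_A)^c$'', and the condition that $\pi_{A,\infty}^{-1}``x$ be coded in $M_A$ is now built into the hypothesis of the left-hand intersection, which is precisely what frees the first inclusion from needing simplicity of $\mathfrak{A}$.

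For the first inclusion, fix $x$ in the left-hand intersection. For each $A\in\mathfrak{A}$, choose $f_A\colon A\to\V$ so that $[f_A]_{U_A}$ codes $\pi_{A,\infty}^{-1}``x$ in $M_A$, and set $\vf(A)=f_A$. The smallness clause in the ill-founded definition of skeleton guarantees that $j``\mathfrak{A}$ is coded in $\V^\B/U$, which allows the literal translation of the defining formula $(*)$ from the proof of Theorem \ref{thm:SimpleSkeletonsMakeItWork} to be written inside $\V^\B/U$ with the inner $\in$ interpreted as the $E$-relation. Verification that this formula codes $x$ proceeds exactly as in the two halves of $(*)$: the forward direction uses directedness of $\mathfrak{A}$; the reverse uses that $\mathfrak{A}$ generates $\check{\V}_U$ together with the identity $\pi_{\bar B,\infty}([f_{\bar B}]_{U_{\bar B}})=j(f_{\bar B})(b_{\bar B})$, now read at the level of coded sets.

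The equivalence of the converse inclusion with simplicity of $\mathfrak{A}$ is short in both directions. If $\mathfrak{A}$ is simple, fix $x\in(\V^\B/U)^c$ and $A\in\mathfrak{A}$; by Theorem \ref{thm:ImportanceOfG_A-Illfounded} the composition $\chi\circ\pi_{A,\infty}$ is the Boolean ultrapower embedding of $M_A$ by $G_A$, and since $G_A$ is coded in $M_A$ this construction is internal to $M_A$, making $\pi_{A,\infty}^{-1}``x$ coded in $M_A$. Conversely, the canonical generic $G=([\dot{G}]_U)^c$ is itself coded in $\V^\B/U$ by $[\dot{G}]_U$ and is a subset of $|\check{\V}_U|$, and $\pi_{A,\infty}^{-1}``G=G_A$; hence if the converse inclusion holds, each $G_A$ for $A\in\mathfrak{A}$ is coded in $M_A$, which is precisely simplicity of $\mathfrak{A}$.

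For the final equality under the assumption that every maximal antichain is simple, apply the previous parts to the fixed skeleton $\mathfrak{A}$ (now simple) to get equality at $\mathfrak{A}$; intersecting over all maximal antichains can only shrink the left-hand side, yielding $\subseteq$, while the internal Boolean ultrapower argument from Theorem \ref{thm:ImportanceOfG_A-Illfounded}, applied to each now-simple maximal antichain, yields $\supseteq$. The main difficulty is bookkeeping rather than structural: every appeal to definability or membership in an ill-founded model must be rephrased via the code operation $(\cdot)^c$ and the $E$-relation, and one must confirm that the defining formula for $x$ uses only ingredients that are coded inside $\V^\B/U$.
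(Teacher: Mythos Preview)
Your proposal is correct and follows essentially the same approach as the paper: the first inclusion adapts the defining formula $(*)$ from Theorem \ref{thm:SimpleSkeletonsMakeItWork} using the coded version of $j``\mathfrak{A}$; the equivalence with simplicity is established by testing the reverse inclusion on $([\dot{G}]_U)^c$ in one direction and by internalizing the Boolean ultrapower of $\pi_{0,A}(\B)$ by $G_A$ to $M_A$ (via Theorem \ref{thm:ImportanceOfG_A-Illfounded}) in the other; and the final equality follows by combining these for the now-simple skeleton with the internal-ultrapower argument for arbitrary simple antichains. One small inaccuracy: in your sketch of $(*)$ you swap the roles of directedness and the generating property---the forward direction uses only the generating property, while the reverse direction uses both (directedness to find a common refinement in $\mathfrak{A}$)---but since you explicitly defer to the earlier proof this does not affect correctness.
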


\begin{proof}
Let $a\sub\check{\V}_U$ be such that for every $A\in\mathfrak{A}$, $\pi_{A,\infty}^{-1}``a$ is coded in $M_A$. Let $f_A:A\To\V$ be such that $\pi_{A,\infty}^{-1}``a$ is coded by $[f]_{U_A}$ in $M_A$.

In $\V^\B/U$, if $B$ is a maximal antichain in $j(\B)$ with $B\in\check{\V}_U$, let
$\tilde{b}_B$ be the unique condition in $B\cap G$. Let $j``\mathfrak{A}=D^c$, $D\in\V^\B/U$. Define, in $\V^\B/U$, the set $a'$ by:
\[a'=\{x\st\exists A\in D\forall B\in D (B\le^*A\implies
x\in j(\vf)_B(\tilde{b}_B))\}\]
It follows then as in the proof of \ref{thm:Bukovsky-DehornoyFromSimpleSkeleton} that $a'$ codes $a$.

For the second part of the theorem, first suppose that the reverse inclusion holds. Then, since $([\dot{G}]_U)^c$ is an element of the right hand side, it belongs to the set on the left hand side, which means that for every $A\in\mathfrak{A}$, $\pi_{A,\infty}^{-1}``([\dot{G}]_U)^c$ is coded in $M_A$, that is, $A$ is simple. So $\mathfrak{A}$ is simple.

Conversely, suppose that every $A\in\mathfrak{A}$ is simple. We have to show that the reverse inclusion holds. So suppose $x\sub\check{\V}_U$ is coded in $\V^\B/U$. Given $A\in\mathfrak{A}$, we have to show that $\pi_{A,\infty}^{-1}``x$ is coded in $M_A$. Let $g\in M_A$ code $\pi_{A,\infty}^{-1}``([\dot{G}]_U)^c$. Then we can define within $M_A$ the Boolean ultrapower of $\pi_{0,A}(\B)$ by $g$, and it will be the case that this ultrapower is isomorphic to $(M_A)^{\pi_{0,A}(\B)}/G_A$, so we will just take them to be equal. Let $j'$ be the canonical embedding from $M_A$ into its ultrapower by $G_A$ (i.e., for $a\in M_A$, $j'(a)=b$, where $b$ is the member of $M_A$ such that $M_A$ thinks that $b$ is the image of $a$ under the ultrapower embedding).
Let $\chi:\V^\B/U\To(M_A)^{\pi_{0,A}(\B)}/G_A$ be the isomorphism postulated in Theorem \ref{thm:ImportanceOfG_A-Illfounded}. Then $\pi_{A,\infty}^{-1}``x=(j')^{-1}``\chi``x$, and $\chi``x$ is coded in what $M_A$ thinks is $M_A^{\pi_{0,A}(\B)}/G_A$ (if $x=y^c$, $y\in\V^\B/U$, then $\chi``x=\chi(y)$). Then what $M_A$ thinks is the pullback of the set coded by $y$, codes $\pi_{A,\infty}^{-1}``x$.

The above reasoning works for any simple maximal antichain, so the equality claimed in the theorem follows as well. \end{proof}

\subsection{Characterizing simplicity of antichains}

Fixing a maximal antichain $A\sub\B$, we want to understand better what it means for $A$ to be simple, i.e., for $G_A$ to be in $M_A$. As usual, we assume the Boolean ultrapower to be well-founded. We first analyze what it means for a function $\vec{G}=\kla{G_a\st a\in A}$ to represent $G_A$. That is, we want to characterize when $[\vec{G}]_{U_A}=G_A$. First, recall that
\claim{(1)}{$[\id]_{U_A}\in G_A$.}

\prooff{(1)} This is because $b_A=\pi_{A,\infty}([\id]_{U_A})\in G$. \qedd{(1)}

\medskip

Let us say that a function $f:A\To\B$ such that for all $a\in A$, $f(a)\le a$, is a \emph{pressing down function}, and
let us fix such a function for now. Using the definition of $G_A$, it follows that

\claim{(2)}{$[f]_{U_A}\in G_A\iff\bigvee\{f(a)\st a\in A\}\in U$.}

This is because $[f]_{U_A}\in G_A$ iff $\pi_{A,\infty}([f]_{U_A})\in G$. Now $\pi_{A,\infty}([f]_{U_A})=[\mix\tilde{f}]_U$, where $\mix\tilde{f}$ is the result of mixing the names $(f(a))\check{}$, for $a\in A$, so that $a\le\BV{\mix\tilde{f}=(f(a))\check{}}$. But since $f(a)\le a$ and $A$ is an antichain, it follows that $a=\BV{\mix\tilde{f}=(f(a))\check{}}$. Since furthermore, $G=[\dot{G}]_U$, we get that $[f]_{U_A}\in G_A$ iff $[\mix\tilde{f}]_U\in[\dot{G}]_U$. This latter statement is the case iff $\BV{\mix\tilde{f}\in\dot{G}}\in U$. But
\begin{ea*}
\BV{\mix\tilde{f}\in\dot{G}}&=&\bigvee_{b\in\B}(\BV{\mix\tilde{f}=\check{b}}\land\underbrace{\BV{\check{b}\in\dot{G}}}_{=b})\\
&=&\bigvee_{a\in A}\BV{\mix\tilde{f}=(f(a))\check{}}\land f(a)\\
&=&\bigvee_{a\in A}a\land f(a)\\
&=&\bigvee_{a\in A}f(a).
\end{ea*} \qedd{(2)}

So if $G_A=[\vec{G}]_{U_A}$, it has to be the case for any pressing down function $f:A\To\B$ that
\claim{(3)}{$\bigvee\{f(a)\st a\in A\}\in U\iff\bigvee\{a\in A\st f(a)\in G_a\}\in U$}

This reasoning can be carried out in both directions, and so, we get the following characterization.

\begin{lem}
\label{lem:CharacterizationOfRepresentingG_A}
If $U\sub\B$ is an ultrafilter that gives rise to a well-founded ultrapower, $A\sub\B$ is a maximal antichain and $\vG=\seq{G_a}{a\in A}$ is a function, then the following are equivalent:
\begin{enumerate}
  \item $[\vG]_{U_A}=G_A$
  \item $\{a\in A\st G_a\ \text{is an ultrafilter on}\ \B\}\in U_A$, and for every pressing down function $f:A\To\B$,
      \[\bigvee\{f(a)\st a\in A\}\in U\iff\bigvee\{a\in A\st f(a)\in G_a\}\in U.\]
\end{enumerate}
\end{lem}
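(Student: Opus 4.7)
The plan is to derive both implications from the three numbered claims preceding the lemma, combined with \Loz's theorem applied to the ultrapower $M_A = \V^A/U_A$. The content of claim (2) already yields the pressing-down biconditional in one direction; the substantive work for the reverse direction is to see that agreement of $G_A$ and $[\vG]_{U_A}$ on pressing-down representatives propagates to all elements of $\pi_{0,A}(\B)$ in $M_A$.

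For the forward direction, assume $[\vG]_{U_A} = G_A$. Since $G_A$ is, internally in $M_A$, an ultrafilter on $\pi_{0,A}(\B)$, \Loz's theorem immediately gives $\{a \in A : G_a \text{ is an ultrafilter on } \B\} \in U_A$. For a pressing-down $f$, claim (2) yields $[f]_{U_A} \in G_A \iff \bigvee\{f(a) : a \in A\} \in U$, while \Loz gives $[f]_{U_A} \in [\vG]_{U_A} \iff \{a \in A : f(a) \in G_a\} \in U_A$, which unwinds by definition of $U_A$ to $\bigvee\{a \in A : f(a) \in G_a\} \in U$. The assumed equality $G_A = [\vG]_{U_A}$ identifies these two right-hand sides, establishing the biconditional in (2).

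For the reverse direction, assume (2). By \Loz{} applied to the first clause of (2), $[\vG]_{U_A}$ is an ultrafilter on $\pi_{0,A}(\B)$ in $M_A$, as is $G_A$; so it suffices to show that the two ultrafilters contain the same elements. An arbitrary element of $\pi_{0,A}(\B)$ has the form $[f]_{U_A}$ for some $f : A \to \B$. Given such an $f$, set $g(a) = f(a) \land a$; this $g$ is a pressing-down function, and in $M_A$ one has $[g]_{U_A} = [f]_{U_A} \land [\id]_{U_A}$. Applying the pressing-down biconditional in (2) to the identity function shows that $[\id]_{U_A} \in [\vG]_{U_A}$ (since $\bigvee A = 1_\B \in U$), while claim (1) gives $[\id]_{U_A} \in G_A$. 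Because both ultrafilters contain $[\id]_{U_A}$, in either of them $[f]_{U_A}$ lies in the filter iff $[g]_{U_A}$ does. The problem thus reduces to pressing-down functions, which is handled exactly as in the forward direction by combining claim (2) with \Loz's theorem.

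The main obstacle to anticipate is the reduction from arbitrary $f$ to pressing-down $g$: the equivalence of claim (2) only speaks about pressing-down functions, whereas the ultrafilter equality in (1) must be checked on every element of $\pi_{0,A}(\B)$. Once one observes that $[\id]_{U_A}$ belongs to both ultrafilters --- to $G_A$ by claim (1), and to $[\vG]_{U_A}$ by applying the biconditional of (2) to $\id$ --- the rest is routine bookkeeping via claim (2) and \Loz's theorem.
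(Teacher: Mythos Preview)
Your proposal is correct and follows essentially the same approach as the paper: both rely on claims (1), (2), (3) preceding the lemma together with \Los's theorem, and the paper's proof is in fact just the one-line remark ``this reasoning can be carried out in both directions.'' You supply the one step the paper leaves implicit, namely the reduction from an arbitrary $f:A\to\B$ to the pressing-down function $g(a)=f(a)\land a$ using that $[\id]_{U_A}$ lies in both ultrafilters; this is exactly the missing bookkeeping needed to make the reverse direction go through.
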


It is shown in \cite{Fuchs:StrongPrikry} that if $\P$ is \Prikry{} forcing, Magidor forcing or the generalized \Prikry{} forcing, $\B$ is the Boolean algebra of $\P$, $j:\V\To M$ is the elementary embedding from the imitation iteration of $\P$, and $U$ is the pullback of the ultrafilter on $j(\B)$ generated by the critical sequence, then every
maximal antichain in $\P$ is simple with respect to $U$. It is also show that in the case of \Prikry{} forcing, Magidor forcing and short generalized \Prikry{} forcing, there is a skeleton. It follows that in these cases, the \Bukovsky-Dehornoy phenomenon holds.

\section{Continuous, eventually uniform representations}
\label{sec:CEU}

We would like to find a way to say something about the intersection model without assuming the existence of a skeleton. For the remainder of this section, let us fix a partial order $\P$, its Boolean algebra $\B$, and ultrafilter $U$ on $\B$ such that $\check{\V}_U$ is well-founded, and let $G=[\dot{G}]_U$.

The requirement in Theorem \ref{thm:SimpleSkeletonsMakeItWork} that the set of generating maximal antichains have size at most the critical point of the Boolean ultrapower embedding is very restrictive (for example, it does not apply to medium or long generalized \Prikry{} forcing), but allows us to make the strong conclusion that $\V^\B/U=\bigcap_AM_A$. Asking that the maximal antichains be simple, on the other hand, seems like a very natural and not too restrictive condition.

We will now explore a strengthening of the simplicity of a set, thus arriving at the concept of a continuous, eventually uniform representation. This will enable us to develop another sufficient criterion for when a poset satisfies the \Bukovsky-Dehornoy phenomenon with respect to an ultrafilter on its Boolean algebra. We will also be able to state a general theorem characterizing exactly which part of the intersection model the model $\V^\B/U$ is made up of.

As a motivation for the concepts to follow, suppose $x\sub\check{\V}_U$ is not only in every $M_A$, but assume moreover that it is uniformly represented, in the following sense.

\begin{defn}
\label{def:UniformlyRepresented}
A set $x\sub\check{\V}_U$ is uniformly represented by $\vx=\seq{x_p}{p\in\P}$ with respect to the ultrafilter $U$ on the Boolean algebra $\B$ of $\P$ if for every maximal antichain $A\sub\P$, $x_A=[\vx\rest A]_{U_A}$, where $x_A=\pi_{A,\infty}^{-1}``x$. In this case, $\vx$ is called a uniform representation of $x$ with respect to $U$.
\end{defn}

Let $\vx$ be a uniform representation of $x$ with respect to $U$.
Fix $a\in\check{V}_U$. Let $A\sub\P$ be a maximal antichain such that $a\in\ran(\pi_{A,\infty})$. Then \[\pi_{A,\infty}(x_A)=\pi_{A,\infty}([\vx\rest A]_{U_A})=j(\vx)_{b_A}\]
So $a\in x$ iff $a\in j(\vx)_{b_A}$. The dependency on $b_A$ is what stands in our way and prevents us from turning this into a definition of $x$ in $\V^\B/U$.

In the following, we will mostly focus on the intersection model $\bigcap_{A\sub\P}M_A$, and, accordingly, we will work with uniform representations indexed by members of $\P$. The reader who wants to work with the Boolean algebra directly may just identify $\P$ and $\B$ in what follows. As usual, we assume that $\P$ is separative and has a maximal element, $\eins$.

\begin{defn}
\label{defn:UniformBelowAandEventuallyUniform}
If $A\sub\P$ is a maximal antichain, then let $\P_{\le A}=\{p\st\exists q\in A\quad p\le q\}$. A function $\vx=\seq{x_p}{p\in\P_{\le A}}$ is a uniform representation (wrt.~$U$) of $x$ \emph{below} $A$, where $x\sub\check{\V}_U$, if for every maximal antichain $B\le^* A$, $[\vx\rest B]_{U_B}=x_B$. It is an eventually uniform representation (wrt.~$U$) of $x$ if there is a maximal antichain below which it is a uniform representation.

An eventually uniform representation $\vx$ of a set $x$ is \emph{continuous} if for every $y$, and every $p\in\P$, there is a $q\le p$ such that for all $r_1,r_2\le q$, $y\in x_{r_1}$ iff $y\in x_{r_2}$.

If $x\sub\V^\B/U$ has a continuous, eventually uniform representation, then we just say that $x$ is CEU. Since we fixed $U$ for this section, we may drop the reference to it.
\end{defn}

We will need the following simple observation.

\begin{obs}
\label{obs:b_AsGenerateG}
For any antichain $A\sub\P$ and any $p\in G$, there is an antichain $B\le^* A$ such that $b_B\le p$.
\end{obs}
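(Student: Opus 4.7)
The plan is to represent $p$ as a value $j(f)(b_C)$ coming from a refinement $C$ of $A$, then build $B$ by splitting each cell of $C$ into pieces that decide compatibility with $f(c)$; elementarity will then force the generic piece to sit below $p$.

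First, since $\P$ is dense in $\B$, by passing to a strengthening I may assume $p \in j(\P)$. As $p \in \check{\V}_U$ and $\check{\V}_U$ is the direct limit of the $M_{C}$'s over maximal antichains $C \sub \P$, there is some $C_0$ and $\bar p \in M_{C_0}$ with $\pi_{C_0,\infty}(\bar p) = p$. Taking a common refinement with $A$, I may assume $C_0 \le^* A$; relabel this as $C$ and write $\bar p = [f]_{U_C}$ for some $f \colon C \To \P$ in $\V$. Under the standard identification $\pi_{C,\infty}([f]_{U_C}) = j(f)(b_C)$, this gives $p = j(f)(b_C)$. For each $c \in C$, the set $D_c = \{d \le c \st d \le f(c)\ \text{or}\ d \incomp f(c)\}$ is dense below $c$ in $\P$, since any $d' \le c$ is either incompatible with $f(c)$, or compatible and so has a common extension with $f(c)$ below $c$. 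Using choice, I pick a maximal antichain $E_c \sub D_c$ below $c$, and set $B = \bigcup_{c \in C} E_c$. A routine check shows that $B$ is a maximal antichain in $\P$ refining $C$ (hence $A$), with refinement function $r \colon B \To C$ sending each $b$ to the unique $c \in C$ above it.

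The key first-order property of $B$ over $\V$ is: for every $b \in B$, either $b \le f(r(b))$ or $b \incomp f(r(b))$. Applying elementarity of $j$ to this and specialising to $b = b_B \in j(B) \cap G$, we get that either $b_B \le j(f)(j(r)(b_B))$ or $b_B \incomp j(f)(j(r)(b_B))$. From $b \le r(b)$ for all $b \in B$, elementarity gives $b_B \le j(r)(b_B)$, so $j(r)(b_B)$ lies in $G \cap j(C) = \{b_C\}$, hence $j(r)(b_B) = b_C$ and $j(f)(j(r)(b_B)) = p$. Thus $b_B \le p$ or $b_B \incomp p$. Since $b_B$ and $p$ both lie in $G$, a Boolean filter on $j(\B)$, we have $b_B \wedge p \neq 0$, ruling out the second alternative; hence $b_B \le p$. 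The main obstacle is just bookkeeping around the identifications $\pi_{C,\infty}([f]_{U_C}) = j(f)(b_C)$ and $j(r)(b_B) = b_C$, but both are standard; once these are in hand, one application of elementarity closes the argument.
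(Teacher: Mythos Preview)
Your proof is correct, but it follows a different route than the paper's. The paper works inside $M_A$ (after refining $A$ so that $p\in\ran(\pi_{A,\infty})$): it uses the earlier facts that $G_A$ is an ultrafilter on $\pi_{0,A}(\B)$ and that $[\id]_{U_A}\in G_A$, picks a common lower bound $q=[f]_{U_A}$ of $\bar p$ and $[\id]_{U_A}$ with $f$ pressing down, and lets $B$ be a maximal antichain extending $\ran(f)$; then $b_B=j(f)(b_A)=\pi_{A,\infty}(q)\le p$ drops out directly. Your argument instead stays in $\V$ and builds $B$ combinatorially, refining $C$ so that each $b\in B$ either extends or is incompatible with $f(r(b))$, and then lets elementarity plus the filter property of $G$ rule out the incompatible alternative. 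Your approach is more self-contained---it avoids invoking the ultrafilter structure of $G_A$ established in Theorem~\ref{thm:ImportanceOfG_A} and claim~(1) of Section~\ref{section:Bukovsky-Dehornoy}---while the paper's is shorter once that machinery is in hand. Both are clean; yours could serve as an alternative for a reader who has not yet internalized the $G_A$ facts.
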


\begin{proof} By replacing $A$ with a refinement of itself, we may assume that $p$ is in the range of $\pi_{A,\infty}$. Let $\bp$ be the preimage of $p$ under $\pi_{A,\infty}$, hence $\bp\in G_A$. We have seen that $G_A$ is an ultrafilter on $\pi_A(\B)$, and that $[\id]_{U_A}\in G_A$ - see the beginning of this section. This means that there is a $q\in G_A$ with $q\le\bp,[\id]_{U_A}$. Let $q=[f]_{U_A}$, where $f:A\To\P$ is a pressing down function (this can be done as $[f]_{U_A}\le[\id]_{U_A}$). Then $\{f(a)\st a\in A\}$ is an antichain, and we can let $B\supseteq\{f(a)\st a\in A\}$ be a maximal antichain that refines $A$. We claim that $b_B\le p$. To see this, note that $j(f)(b_A)=\pi_{A,\infty}([f]_{U_A})=\pi_{A,\infty}(q)\in G$, because $q\in G_A$. But also, $\ran(j(f))\sub j(B)$, and the latter is a maximal antichain in $j(\P)$. So $q=j(f)(b_A)=b_B$, and $q\le p$. \end{proof}

\begin{remark}
It is not generally the case that every condition $p$ in $G$ is of the form $b_A$, for some antichain $A\sub\P$.
\end{remark}

\begin{proof} For example, in the case of \Prikry{} forcing, the condition $p=\kla{\leer,[\kappa,j(\kappa))}$ is in $G$, but it cannot be of the form $b_A$, where $A\sub\P$ is a maximal antichain, because if $p\in j(A)$, then $p$ is the unique member of $j(A)$ that has empty first coordinate (since $j(A)$ is an antichain in $j(\P)$ and any two conditions with the same first coordinate are compatible). But if $j(A)$ has a unique element with empty first coordinate, then the same must be true of $A$. So then, $p$ would have to be in the range of $j$, but this is impossible, since $\kappa\notin\ran(j)$. \end{proof}

\subsection{Uses and existence of CEU representations}

The following lemma shows that continuous, eventually uniform representations are useful. It shows that enhancing ``$x$ belongs to the intersection model'' by ``$x$ has a continuous, eventually uniform representation'' allows the conclusion that $x$ is in $\V^\B/U$.

\begin{lem}
\label{lem:UseOfContinuousEventuallyUniformRepresentations}
If $x\sub\check{\V}_U$ is CEU, then $x\in\V^\B/U$.
\end{lem}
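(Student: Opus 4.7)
The plan is to show that $x$ admits a definition inside $\V^\B/U$ from the parameters $j(\vx)$, $j(A)$, and $G=[\dot G]_U$, where $\vx\in\V$ is a continuous, eventually uniform representation of $x$ below a maximal antichain $A\sub\P$. Since these three objects all live in $\V^\B/U$, this will suffice. The candidate definition, interpreted in $\V^\B/U$, is
\[\tilde x \;=\; \{y : \exists p \in G \cap j(\P_{\le A})\;\; \forall r \le p\;\; y \in j(\vx)_r\},\]
and the aim is to prove $\tilde x = x$.

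The workhorse, extracted at the outset from uniformity, is an identity valid for every maximal antichain $B \le^* A$ in $\V$: applying $\pi_{B,\infty}$ to both sides of $[\vx\rest B]_{U_B} = \pi_{B,\infty}^{-1}``x$ from Definition \ref{def:UniformlyRepresented}, and using $\pi_{B,\infty}([\vx\rest B]_{U_B}) = j(\vx)(b_B)$, yields
\[j(\vx)_{b_B} \;=\; x \cap \ran(\pi_{B,\infty}).\]
The inclusion $\tilde x \sub x$ is then immediate: given $y\in\tilde x$ with witness $p\in G$, Observation \ref{obs:b_AsGenerateG} applied to $p$ and $A$ produces a maximal antichain $B'\le^* A$ in $\V$ with $b_{B'}\le p$, and the identity gives $y\in j(\vx)_{b_{B'}} = x\cap\ran(\pi_{B',\infty})\sub x$.

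For the main direction $x\sub\tilde x$, given $y\in x\sub\check{\V}_U$, I would first pick a maximal antichain $B\le^* A$ in $\V$ with $y\in\ran(\pi_{B,\infty})$; the key identity then gives $y\in j(\vx)_{b_B}$. Next I would invoke continuity of $\vx$, transferred to $\check{\V}_U$ by elementarity of $j\colon\V\elementary\check{\V}_U$: applied to $y$ and $p=b_B$, it supplies $q\le b_B$ in $j(\P)$ such that membership of $y$ in $j(\vx)_r$ is constant as $r$ ranges over conditions $\le q$. Since $q\le b_B\in G$ and $G$ is a filter, $q\in G$; so Observation \ref{obs:b_AsGenerateG} applied with $A$ replaced by $B$ and $p=q$ yields $B'\le^* B$ in $\V$ with $b_{B'}\le q$. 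Refinement gives $\ran(\pi_{B,\infty})\sub\ran(\pi_{B',\infty})$, hence $y\in\ran(\pi_{B',\infty})$, and the key identity applied at $B'$ yields $y\in j(\vx)_{b_{B'}}$. This pins the constant value on $\{r\le q\}$ to ``true'', so $q$ itself witnesses $y\in\tilde x$. The principal subtlety is precisely this: continuity delivers constancy only on a downward cone, so one must first descend from $b_B$ to $q$ and then re-enter the ``true'' region by finding $b_{B'}\le q$ via Observation \ref{obs:b_AsGenerateG}, using the key identity twice to transport information between $b_B$ and $b_{B'}$.
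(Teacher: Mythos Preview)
Your candidate definition of $\tilde x$ is the same as the paper's, and the overall strategy is right, but there are two gaps, one minor and one fatal.

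\textbf{Minor:} Your ``key identity'' $j(\vx)_{b_B}=x\cap\ran(\pi_{B,\infty})$ is false as stated. Applying $\pi_{B,\infty}$ to the set $\pi_{B,\infty}^{-1}``x$ gives the image of that set as an element, not the pointwise image; these differ unless $\pi_{B,\infty}$ is surjective onto the relevant rank. The correct statement is only $j(\vx)_{b_B}\cap\ran(\pi_{B,\infty})=x\cap\ran(\pi_{B,\infty})$. Your $\tilde x\sub x$ argument therefore needs the extra step of choosing $B'$ so that also $y\in\ran(\pi_{B',\infty})$; this is easily arranged by taking $B'$ to refine a common refinement of $A$ and some antichain capturing $y$, so the gap is repairable.

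\textbf{Fatal:} In the $x\sub\tilde x$ direction you write ``Since $q\le b_B\in G$ and $G$ is a filter, $q\in G$.'' This is backwards: filters are closed upward, not downward. From $q\le b_B\in G$ you cannot conclude $q\in G$, and without $q\in G$ you can neither invoke Observation~\ref{obs:b_AsGenerateG} to produce $b_{B'}\le q$ nor use $q$ as the witness that $y\in\tilde x$. The whole tail of the argument collapses. The paper avoids this trap by reversing the order of operations: continuity says that for each $z$ the set of $q$ below which membership in $j(\vx)_r$ is constant is dense in $j(\P)$, so by \emph{genericity} of $G$ over $\check{\V}_U$ one first picks such a $q\in G$, and only then descends via Observation~\ref{obs:b_AsGenerateG} to some $b_B\le q$ with $z\in\ran(\pi_{B,\infty})$ to determine the constant value. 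You tried to get to $q$ from $b_B$; the paper gets to $b_B$ from $q$, and that asymmetry is exactly what makes the argument work.
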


\begin{proof} Suppose $\vx$ is a continuous uniform representation of $x$ below $A$. We want to use $j(\vx)$ to define $x$ in $\V^\B/U$, as follows:

\[z\in x \iff \exists q\in j(\P)_{\le j(A)} (q\in G\ \text{and for all $r\le q$}, z\in j(\vx)_r)\]

To see that this defines $x$, let $z$ be given.

Since $\vx$ is continuous, $j(\vx)$ is continuous in $\check{V}_U$. So, applying this to $z$, in $\check{\V}_U$, it is true that for every $p\in j(\P)$, there is a $q\le p$ such that for all $r_1,r_2\le q$, $z\in j(\vx)_{r_1}$ iff $z\in j(\vx)_{r_2}$. So the set of such $q$ is dense in $j(\P)$. By genericity, there is such a $q\in G$.

If $z\in x$, then let $B\le^*A$ be such that $z\in\ran(\pi_{B,\infty})$. Let $\pi_{B,\infty}(\bar{z})=z$, so $\bar{z}\in x_B=[\vx\rest B]_{U_B}$. Applying $\pi_{B,\infty}$ on both sides yields that $z\in j(\vx)_{b_B}$. But we can pick $B$ so that $b_B\le q$, by Observation \ref{obs:b_AsGenerateG}. So this means that there is an $r_1\le q$ with $z\in j(\vx)_{r_1}$, and so, this is true for all $r_1\le q$.

On the other hand, assume $z\notin x$. We want to show that in this case, for all $r\le q$, $z\notin j(\vx)_r$. By the choice of $q$, it would otherwise follow that for all $r\le q$, $z\in j(\vx)_r$. But then, we can argue as above: let $B\le^*A$ be a maximal antichain with $b_B\le q$ and $z\in\ran_{B,\infty}$. Then $z\notin j(\vx)_{b_B}$, and $b_B\le q$, a contradiction. So it must be that for all $r\le q$, $z\notin j(\vx)_r$. \end{proof}

The obvious question is now whether it is reasonable to expect continuous, eventually uniform representations to exist. It is shown in \cite{Fuchs:StrongPrikry} that in all the cases considered here, \Prikry{} forcing, Magidor forcing and generalized \Prikry{} forcing of any length, the canonical ultrafilter has a uniform representation, so let's view the assumption that $G$ has a uniform representation as a reasonable one. Suppose $x\in\bigcap_A M_A$. The question is whether it is reasonable to ask that $x$ have a continuous, eventually uniform representation, in order to conclude that $x\in\V^\B/U$, or whether this would be unduly restrictive.
If what we ask of $x$ is implied by $x\in\V^\B/U$, then we are not asking too much. It turns out that it is not asking too much: every element of $\V^\B/U$ that is a subset of $\check{\V}_U$ has a continuous, eventually uniform representation, if $G$ has a uniform representation!

\begin{lem}
\label{lem:StrongPrikryImpliesContinuousRepresentationsForMembersOfV^B/U}
If $G$ is uniformly represented with respect to $U$, and if
$x\in\V^\B/U$ with $x\sub\check{V}_U$, then $x$ is CEU.
\end{lem}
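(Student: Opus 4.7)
Let $x = [\tau]_U$ for some $\B$-name $\tau$, and let $\vG = \seq{G_p}{p \in \P}$ be the assumed uniform representation of $G$. My plan is to set
\[
x_p := \{y \in \V : \BV{\check{y} \in \tau}^\B \in G_p\}
\]
for all $p \in \P$, and to show that $\vx = \seq{x_p}{p \in \P}$ is a continuous uniform representation of $x$ (hence a fortiori a continuous, eventually uniform representation below $\{\eins\}$). The guiding intuition is that the representation $\vG$ converts the ``global'' membership relation $[\sigma]_U \in [\tau]_U$, i.e., $\BV{\sigma \in \tau}^\B \in U$, into a ``local'' one $\BV{\sigma \in \tau}^\B \in G_p$ indexed by $p$.

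Before working with $\vx$, I would replace $\vG$ by an equivalent representation in which each $G_p$ is an ultrafilter on $\B$ containing $p$. By Lemma~\ref{lem:CharacterizationOfRepresentingG_A}, together with the observation that $[\id]_{U_A} \in G_A$ always holds (which, via uniformity of $\vG$, forces $\{a \in A : a \in G_a\} \in U_A$ for every $A$), the set $B := \{p \in \P : G_p \text{ is not an ultrafilter on }\B\text{ with }p \in G_p\}$ has the property that $B \cap A$ is $U_A$-null for \emph{every} maximal antichain $A$. Using \AC{} in $\V$ to select, for each $p \in B$, some ultrafilter on $\B$ containing $p$, I can redefine $\vG$ on $B$ without altering any $[\vG \rest A]_{U_A}$. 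This preprocessing is really the only subtle step of the argument: without the fact that the ``bad $p$'s'' are simultaneously $U_A$-null across all antichains $A$, the modification would not be harmless.

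With this improved $\vG$, continuity of $\vx$ is immediate. Given any $y$ and any $p \in \P$, set $b := \BV{\check{y} \in \tau}^\B$. Since $\P$ is dense in $\B$, there is $q \le p$ with either $q \le b$ or $q \le \neg b$; any $r \le q$ in $\P$ inherits the same relation to $b$, and since $G_r$ is an ultrafilter containing $r$, whether $b \in G_r$ is decided uniformly below $q$, so $y \in x_r$ is constant for $r \le q$.

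The remaining task is to verify that $\vx$ uniformly represents $x$. Fix a maximal antichain $A \sub \P$ and let $[g]_{U_A} \in M_A$ for some $g : A \to \V$; write $\tilde{g}(a) := (g(a))\check{}$, so that $\pi_{A,\infty}([g]_{U_A}) = [\mix(\tilde{g})]_U$. Then $[g]_{U_A} \in \pi_{A,\infty}^{-1}``x$ iff $\BV{\mix(\tilde{g}) \in \tau}^\B \in U$. Using that $a \le \BV{\mix(\tilde{g}) = (g(a))\check{}}^\B$ for each $a \in A$, a direct calculation yields
\[
\BV{\mix(\tilde{g}) \in \tau}^\B \;=\; \bigvee_{a \in A}\bigl(a \wedge \BV{(g(a))\check{} \in \tau}^\B\bigr) \;=\; \bigvee_{a \in A} f(a),
\]
where $f(a) := a \wedge \BV{(g(a))\check{} \in \tau}^\B$ is a pressing-down function on $A$. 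By Lemma~\ref{lem:CharacterizationOfRepresentingG_A} applied to $\vG$, this join lies in $U$ iff $\{a \in A : f(a) \in G_a\} \in U_A$; and since $a \in G_a$ for every $a$ (by preprocessing), $f(a) \in G_a$ is equivalent to $\BV{(g(a))\check{} \in \tau}^\B \in G_a$, i.e., to $g(a) \in x_a$. Hence $[g]_{U_A} \in \pi_{A,\infty}^{-1}``x$ iff $[g]_{U_A} \in [\vx \rest A]_{U_A}$, completing the verification.
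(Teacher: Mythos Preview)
Your proof is correct and follows the same core strategy as the paper: normalize the uniform representation $\vG$ so that each $G_p$ is an ultrafilter on $\B$ containing $p$, and then define $x_p$ by evaluating a name for $x$ against $G_p$. The difference is one of packaging. You work directly with a $\V$-name $\tau$ satisfying $x=[\tau]_U$ and obtain a uniform representation on all of $\P$; the paper instead takes a name $\dot{x}\in\check{\V}_U$ with $x=\dot{x}^G$, pulls it back to some $M_A$ as $\dot{\bar{x}}=[f]_{U_A}$, and defines the representation only on $\P_{\le A}$ via $h(p)=f(a)^{G_p}$ (where $a\in A$ is above $p$). Your version is the special case $A=\{\eins\}$, $f=\const_\tau$ of the paper's, and it is cleaner: it avoids the auxiliary antichain and yields a globally uniform representation rather than one below some $A$. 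For the uniformity check you invoke Lemma~\ref{lem:CharacterizationOfRepresentingG_A} directly via the pressing-down function $f(a)=a\wedge\BV{(g(a))\check{}\in\tau}$, whereas the paper carries out an equivalent Boolean computation inline; both amount to the same thing.
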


\begin{proof} Fix a uniform representation $\seq{G_p}{p\in\P}$ of $G$ with respect to $U$. Viewing $\P$ as a subset of $\B$,
we may assume that for every $p\in\P$, $G_p$ is an ultrafilter on $\B$ with $p\in G_p$. For if we define $G'_b$ to be equal to $G_p$ if $G_p$ is an ultrafilter on $\B$ with $p\in G_p$, and otherwise we let $G'_p$ be a randomly chosen ultrafilter on $\B$ with $p\in G'_p$, then for every maximal antichain $A\sub\P$, it follows that $[\kla{G_a\st a\in A}]_{U_A}=[\kla{G'_a\st a\in A}]_{U_A}$. This is because in $M_A$, it is true that $[\kla{G_a\st a\in A}]_{U_A}$ is an ultrafilter on $[\const_\B]_{U_A}$ with $[\id]_{U_A}\in[\kla{G_a\st a\in A}]_{U_A}$. This means by \Los{} that the set $X=\{a\in A\st G_a\ \text{is an ultrafilter on $\B$  with}\ a\in G_a\}\in U_A$. For $a\in X$, $G_a=G'_a$, so $[\kla{G_a\st a\in A}]_{U_A}=[\kla{G'_a\st a\in A}]_{U_A}$.

Let $x=\dot{x}^G$, $\dot{x}\in\check{\V}_U$. Let $\pi_{A,\infty}(\dot{\bx})=\dot{x}$, $\dot{\bx}\in M_A$. Let $[f]_{U_A}=\dot{\bx}$. For $p\in\P_{\le A}$, let $h(p)=f(a)^{G_p}$, in the sense that $h(p)=\{y\st\BV{\check{y}\in f(a)}\in G_p\}$, where $a\in A$ is unique with $p\le a$.
We claim that $h$ is a continuous uniform representation of $x$ below $A$. First, let's show that it is a uniform representation of $x$ below $A$. Let $B\le^*A$ be a maximal antichain. We have to show that $[h\rest B]_{U_B}=x_B$. Since $x_B=\pi_{B,\infty}^{-1}``x$, this amounts to showing that for $g:B\To\V$, \[[g]_{U_B}\in[h\rest B]_{U_B}\ \text{iff}\ j(g)(b_B)\in x\]
From left to right, let $[g]_{U_B}\in[h\rest B]_{U_B}$. Note that for $p\in B$, $h(p)=(f^A_B(p))^{G_p}$, where $f^A_B$ is the projection of $f$ onto $B$, so $f^A_B(b)=f(a)$ where $a\in A$ is unique with $b\le a$. So
\[M_B\models [g]_{U_B}\in([f^A_B]_{U_B})^{[\vG\rest B]_{U_B}}\]
In other words,
\[M_B\models [g]_{U_B}\in([f^A_B]_{U_B})^{G_B}\]
So
\[M_B\models \BV{([g]_{U_B})\check{}\in([f^A_B]_{U_B})}\in G_B\]
Now we want to apply $\pi_{B,\infty}$ to the constants in this statement. Note that \[\pi_{B,\infty}([f^A_B]_{U_B})=\pi_{B,\infty}(\pi_{A,B}([f]_{U_A}))=
\pi_{A,\infty}([f]_{U_A})=\pi_{A,\infty}(\dot{\bx})=\dot{x}\]%
So the result is that
\[\BV{(j(g)(b_B))\check{}\in\dot{x}}^{M_B}\in G\]
which means that $j(g)(b_B)\in x$.

For the converse, note that $G_B$ is an ultrafilter on $\pi_B(\B)$. So if $[g]_{U_B}$ is not in $([h\rest B]_{U_B})^{G_B}$, which we understood to mean that $\BV{([g]_{U_B})\check{}\in[h\rest B]_{U_B}}\notin G_B$, then this means that
$\BV{([g]_{U_B})\check{}\notin[h\rest B]_{U_B}}\in G_B$, and applying $\pi_{B,\infty}$ as above, we get that $\BV{(j(g)(b_B))\check{}\notin\dot{x}}\in G$, so that $j(g)(b_B)\notin x$. This proves the desired equivalence, and hence that $h$ is a uniform representation of $x$.

To check that $h$ is continuous, let $y$ be given, and let $p\in\P$. Wlog, let $p\in\P_{\le A}$. Let $a\in A$ be such that $p\le a$. By definition then, $h(p)=
\{z\st\BV{\check{z}\in f(a)}\in G_p\}$. Consider $\BV{\check{y}\in f(a)}$. If this is $0$, then clearly, every extension $r$ of $p$ will have $y\notin h(r)$. More generally if $\BV{\check{y}\in f(a)}$ is incompatible with $p$, then the same will be true for every extension $r$ of $p$, because every such $G_r$ will have $p\in G_r$, and so, $\BV{\check{y}\in f(a)}\notin G_r$. But if $\BV{\check{y}\in f(a)}$ is compatible with $p$, then we can pick $q\le p, \BV{\check{y}\in f(a)}$, and as a result, for every $r\le q$, $\BV{\check{y}\in f(a)}\in G_r$, which implies that $y\in h(r)$.

So $h$ is a continuous uniform representation of $x$ below $A$. \end{proof}

In particular, if $G$ is uniformly represented, then it is continuously uniformly represented. Indeed, if $\vec{G}$ is a uniform representation of $G$ such that for every $p\in\P$, $G_p$ is an ultrafilter on $\B$ with $p\in G_p$, then that representation is continuous. In \cite{Fuchs:StrongPrikry}, a sufficient criterion called the strong \Prikry{} property is given, that is in some sense equivalent to $G$ being uniformly represented.

\begin{thm}
\label{thm:V^B/UTheSameAsTheSetsWithContinuousEvUniformRepresentations}
Suppose $G$ is uniformly represented with respect to $U$.
Then for $x\sub\check{\V}_U$, the following are equivalent:
\begin{enumerate}
  \item $x$ is CEU wrt.~$U$.
  \item $x\in\V^\B/U$.
\end{enumerate}
\end{thm}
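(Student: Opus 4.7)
The plan is essentially to observe that this theorem is a direct amalgamation of the two lemmas just established, Lemma \ref{lem:UseOfContinuousEventuallyUniformRepresentations} and Lemma \ref{lem:StrongPrikryImpliesContinuousRepresentationsForMembersOfV^B/U}. So the ``proof'' is really just a bookkeeping step, pointing out that under the blanket assumption of the theorem (that $G$ is uniformly represented with respect to $U$), both directions of the equivalence have already been proved.

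For the implication (1) $\Rightarrow$ (2), I would simply invoke Lemma \ref{lem:UseOfContinuousEventuallyUniformRepresentations} verbatim. Note that this direction does not actually require the hypothesis that $G$ is uniformly represented; a continuous eventually uniform representation $\vx$ of $x$ suffices on its own to define $x$ inside $\V^\B/U$ via the formula
\[ z \in x \iff \exists q \in j(\P)_{\le j(A)}\bigl(q \in G \text{ and } \forall r \le q\; z \in j(\vx)_r\bigr), \]
where $A$ is a maximal antichain below which $\vx$ is uniform.

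For the implication (2) $\Rightarrow$ (1), I would invoke Lemma \ref{lem:StrongPrikryImpliesContinuousRepresentationsForMembersOfV^B/U}. This is precisely where the standing hypothesis that $G$ is uniformly represented with respect to $U$ is used: given such a uniform representation $\seq{G_p}{p \in \P}$ (which, after the standard modification, we may assume consists of ultrafilters with $p \in G_p$) and a name $\dot{x} \in \check{\V}_U$ with $x = \dot{x}^G$, the representation $h(p) = (f^A_B(p))^{G_p}$ produced in the lemma is shown there to be both uniform below $A$ and continuous.

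There is no genuine obstacle here, since all the substantive work has already been done in the preceding lemmas; the only point worth emphasizing in writing the proof is the asymmetry of the hypothesis, namely that the uniform-representation assumption on $G$ is needed only for the direction from $\V^\B/U$ to CEU, not the other way around.
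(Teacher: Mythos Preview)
Your proposal is correct and matches the paper's own proof exactly: the paper simply cites Lemma \ref{lem:UseOfContinuousEventuallyUniformRepresentations} for (1)$\Rightarrow$(2) and Lemma \ref{lem:StrongPrikryImpliesContinuousRepresentationsForMembersOfV^B/U} for (2)$\Rightarrow$(1). Your additional remark about the asymmetry of the hypothesis is accurate and a nice observation, though the paper does not make it explicit here.
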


\begin{proof} The implication from 2.~to 1.~is Lemma \ref{lem:StrongPrikryImpliesContinuousRepresentationsForMembersOfV^B/U}, and the implication from 1.~to 2.~is Lemma \ref{lem:UseOfContinuousEventuallyUniformRepresentations}.
\end{proof}

\subsection{Conclusions about \Bukovsky-Dehornoy, and an alternative intersection model}

Concerning the intersection model, we get:

\begin{thm}
If $G$ is uniformly represented with respect to $U$, then the following are equivalent:
\begin{enumerate}
  \item
  \label{item:B-Dholds}
  $\bigcap_AM_A=\V^\B/U$
  \item
  \label{item:ThingsInTheIntersectionModelHaveContinuousUniformReps}
  Every $x\in\bigcap_{A\sub\P}M_A$ with $x\sub\check{V}_U$ is CEU wrt.~$U$.
\end{enumerate}
\end{thm}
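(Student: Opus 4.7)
The plan is to derive both implications from Theorem \ref{thm:V^B/UTheSameAsTheSetsWithContinuousEvUniformRepresentations}, which, under the standing hypothesis that $G$ is uniformly represented, identifies the subsets of $\check{\V}_U$ lying in $\V^\B/U$ with those that are CEU.

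For the direction (1) $\Rightarrow$ (2), the proof is essentially a one-liner: any $x\in\bigcap_A M_A$ with $x\sub\check{\V}_U$ lies in $\V^\B/U$ by assumption, so Theorem \ref{thm:V^B/UTheSameAsTheSetsWithContinuousEvUniformRepresentations} immediately delivers that $x$ is CEU.

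For (2) $\Rightarrow$ (1), the first step is to observe that the hypothesis ``$G$ is uniformly represented'' already forces every maximal antichain $A\sub\P$ to be simple: if $\vG=\seq{G_p}{p\in\P}$ uniformly represents $G$, then Definition \ref{def:UniformlyRepresented} gives $G_A=[\vG\rest A]_{U_A}\in M_A$ directly. Corollary \ref{cor:EasyDirectionForSimpleAntichainsLocal} then upgrades this to $\V^\B/U\sub M_A$ for every maximal antichain $A$, from which the inclusion $\V^\B/U\sub\bigcap_A M_A$ comes for free.

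For the reverse inclusion $\bigcap_A M_A\sub\V^\B/U$, the plan is to adapt the minimal-counterexample argument used in the proof of Theorem \ref{thm:SimpleSkeletonsMakeItWork}. Assuming the inclusion fails, I would choose an $\in$-minimal $\tilde{x}\in(\bigcap_A M_A)\ohne\V^\B/U$; transitivity of each $M_A$ together with the minimality of $\tilde{x}$ then yield $\tilde{x}\sub\bigcap_A M_A\cap\V^\B/U$. Fixing the rank $\gamma=\rnk(\tilde{x})$ and, working inside $\V^\B/U$, a bijection $f$ from $\V_\gamma$ (computed in $\V^\B/U$) to its $\V^\B/U$-cardinality $\lambda$, I would consider the coded set $a:=f``\tilde{x}\sub\lambda\sub\On\sub\check{\V}_U$ and aim to apply hypothesis (2) to it. The key verification, and the place where the uniform representation of $G$ does real work, is that $a\in\bigcap_A M_A$: since every antichain is simple, $\V^\B/U\sub M_A$, so $f$ and $\tilde{x}$ both live in $M_A$, and then $a\in M_A$ by Replacement. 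Once this is secured, hypothesis (2) gives that $a$ is CEU, Lemma \ref{lem:UseOfContinuousEventuallyUniformRepresentations} places $a\in\V^\B/U$, and, since $f^{-1}\in\V^\B/U$, we recover $\tilde{x}=f^{-1}``a\in\V^\B/U$, contradicting the choice of $\tilde{x}$. The main obstacle in the whole argument is exactly this verification that the coding set $a$ remains in the intersection model; without simplicity of every antichain (which is precisely what the uniform representation of $G$ buys us), the bijection $f$ would not descend into each $M_A$ and the reduction would break down.
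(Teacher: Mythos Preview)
Your proposal is correct and follows essentially the same route as the paper: for (1)$\Rightarrow$(2) you invoke the characterization of CEU sets (the paper cites the underlying Lemma~\ref{lem:StrongPrikryImpliesContinuousRepresentationsForMembersOfV^B/U} directly), and for (2)$\Rightarrow$(1) you observe that uniform representation of $G$ makes every antichain simple, giving $\V^\B/U\sub\bigcap_A M_A$, and then reduce the reverse inclusion to sets of ordinals via the minimal-counterexample argument of Theorem~\ref{thm:SimpleSkeletonsMakeItWork}. The paper merely gestures at this reduction (``as in the proof of Theorem~\ref{thm:SimpleSkeletonsMakeItWork}''), whereas you spell it out, including the verification that the coding set $a$ stays in each $M_A$; this is a welcome elaboration but not a different argument.
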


\begin{proof} For \ref{item:B-Dholds}$\implies$\ref{item:ThingsInTheIntersectionModelHaveContinuousUniformReps}, let $x\in\bigcap_AM_A$, $x\sub\check{V}_U$. By \ref{item:B-Dholds}, $x\in\V^\B/U$. By Lemma \ref{lem:StrongPrikryImpliesContinuousRepresentationsForMembersOfV^B/U}, $x$ has a continuous, eventually uniform representation.

For the converse direction, note that since $G$ has a uniform representation, every maximal antichain is simple, and hence, $\V^\B/U\sub\bigcap_A M_A$, by Lemma \ref{lem:EasyDirectionForSimpleAntichainsGlobal}. It suffices to prove the reverse inclusion for sets of ordinals, as in the proof of Theorem \ref{thm:SimpleSkeletonsMakeItWork}. But if $a\in\bigcap_A M_A$ is a set of ordinals, then $a\sub\check{\V}_U$, and so, by \ref{item:ThingsInTheIntersectionModelHaveContinuousUniformReps}, $a$ has a continuous, eventually uniform representation, so by Lemma \ref{lem:UseOfContinuousEventuallyUniformRepresentations}, $a\in\V^\B/U$.
\end{proof}

For the purpose of the following theorem, let us say that a binary relation $a\sub\On\times\On$ is \emph{a code for the set $x$} if, letting $b$ be the field of the $a$ (i.e., the set of ordinals that occur as first or second coordinates of elements of $a$), $\kla{b,a}$ is extensional and well founded, and the Mostowski-collapse of $\kla{b,a}$ is $\kla{\TC(\{x\}),\in}$.

This concept allows us to characterize $\V^\B/U$ as a modified intersection model. The result may be viewed as a generalization of the original \Bukovsky-Dehornoy phenomenon.

\begin{thm}
\label{thm:TheCorrectIntersectionModel}
If $G$ is uniformly represented with respect to $U$, then
\[\V^\B/U=\{x\in\bigcap_{A\sub\P} M_A\st x\ \text{has a CEU code wrt.~ $U$}\}\]
\end{thm}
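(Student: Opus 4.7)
The plan is to reduce the theorem to Theorem \ref{thm:V^B/UTheSameAsTheSetsWithContinuousEvUniformRepresentations} via the standard device of coding arbitrary sets as binary relations on ordinals. The key preliminary observation is that $\V^\B/U=\check{\V}_U[G]$ is a Boolean generic extension of $\check{\V}_U$ and hence has the same ordinals, so ordered pairs of ordinals already live in $\check{\V}_U$ and any $a\sub\On\times\On$ is automatically a subset of $\check{\V}_U$. This puts every code within the scope of Theorem \ref{thm:V^B/UTheSameAsTheSetsWithContinuousEvUniformRepresentations}.

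For the inclusion $\supseteq$, I would take $x\in\bigcap_A M_A$ equipped with a CEU code $a$. Since $a\sub\check{\V}_U$, Theorem \ref{thm:V^B/UTheSameAsTheSetsWithContinuousEvUniformRepresentations} immediately yields $a\in\V^\B/U$. Now, inside the transitive model $\V^\B/U$, the Mostowski collapse of the extensional, well-founded relation $\kla{b,a}$ (with $b$ the field of $a$) can be computed; by $\Delta_1$-absoluteness of the Mostowski collapse between transitive models of a sufficient fragment of $\ZFC$, this collapse agrees with the true one and produces $\TC(\{x\})\in\V^\B/U$. By transitivity of $\V^\B/U$, then, $x\in\V^\B/U$.

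For the inclusion $\subseteq$, let $x\in\V^\B/U$. First I would verify $x\in\bigcap_A M_A$: a uniform representation $\vec{G}$ of $G$ witnesses directly that $G_A=[\vec{G}\rest A]_{U_A}\in M_A$ for every maximal antichain $A\sub\P$, so every such $A$ is simple, and Lemma \ref{lem:EasyDirectionForSimpleAntichainsGlobal} gives $\V^\B/U\sub\bigcap_A M_A$. Next I would produce a CEU code for $x$ by working inside $\V^\B/U$, which satisfies $\ZFC$: choose a bijection $\pi\colon\TC(\{x\})\to|\TC(\{x\})|$ and set \[ a=\{(\pi(y),\pi(z))\st y,z\in\TC(\{x\}),\ y\in z\}, \] handling the trivial small cases (e.g.\ $x=\leer$) separately. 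Then $a\in\V^\B/U$ is a subset of $\On\times\On\sub\check{\V}_U$ whose Mostowski collapse recovers $\TC(\{x\})$, so $a$ is a code for $x$. Theorem \ref{thm:V^B/UTheSameAsTheSetsWithContinuousEvUniformRepresentations} in the converse direction then furnishes a continuous, eventually uniform representation of $a$.

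I do not anticipate any significant obstacle: essentially all of the real work has already been absorbed into Theorem \ref{thm:V^B/UTheSameAsTheSetsWithContinuousEvUniformRepresentations}, and the present argument is a bookkeeping reduction. The only delicate points are the $\Delta_1$-absoluteness of the Mostowski collapse and the immediate observation that a uniform representation of $G$ makes every maximal antichain simple.
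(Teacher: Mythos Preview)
Your proposal is correct and follows essentially the same approach as the paper's own proof: both directions reduce to the CEU characterization (Theorem~\ref{thm:V^B/UTheSameAsTheSetsWithContinuousEvUniformRepresentations}, equivalently Lemma~\ref{lem:StrongPrikryImpliesContinuousRepresentationsForMembersOfV^B/U}) together with the observation that a uniform representation of $G$ makes every maximal antichain simple so that Lemma~\ref{lem:EasyDirectionForSimpleAntichainsGlobal} applies, and the coding/decoding is carried out inside the $\ZFC$ model $\V^\B/U$. Your write-up is a bit more explicit about the absoluteness of the Mostowski collapse and the explicit construction of the code, but the argument is the same.
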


\begin{proof} For the inclusion from left to right, suppose $x\in\V^\B/U$. Since $\V^\B/U$ is a model of $\ZFC$, there is a code $a$ for $x$ in $\V^\B/U$. By Lemma \ref{lem:StrongPrikryImpliesContinuousRepresentationsForMembersOfV^B/U}, $a$ is CEU.
Moreover, since $G$ is uniformly represented, every maximal antichain is simple, and so, it follows that $\V^\B/U\sub\bigcap_A M_A$, by Lemma \ref{lem:EasyDirectionForSimpleAntichainsGlobal}, as before. So $x\in M_A$, for every maximal antichain $A$.

For the other direction, let $a$ be a CEU code for $x$, where $x\in\bigcap_AM_A$.
By Theorem \ref{thm:V^B/UTheSameAsTheSetsWithContinuousEvUniformRepresentations}, $a\in\V^\B/U$. But then, $a$ can be decoded in $\V^\B/U$, so that $x\in\V^\B/U$. \end{proof}


\end{document}